
\documentclass[a4paper]{article}

\usepackage{amsmath} 

\usepackage{amsthm} 

\usepackage{amssymb} 
\usepackage[shortcuts]{extdash} 
\usepackage{floatrow}
\usepackage{graphicx}
\usepackage{subfig}

\usepackage{mathrsfs} 

\usepackage{color}

\floatsetup[table]{font=footnotesize} 

\newcommand{\beq}{\begin{equation}}
\newcommand{\eeq}{\end{equation}}
\newcommand{\beqa}{\begin{eqnarray}}
\newcommand{\eeqa}{\end{eqnarray}}

\newtheorem{defi}{Definition}
\newtheorem{lemma}{Lemma}
\newtheorem{prop}{Proposition}
\newtheorem{remark}{Remark}
\newtheorem{thm}{Theorem}

\newtheorem*{npGL}{New properties}

\def\hmu{\mbox{$\hat{\mu}$}}
\def\oD{\mbox{$\overline{\dd}$}}
\def\of{\mbox{$\overline{f}$}}
\def\ofprime{\mbox{$\overline{f'}$}}
\def\ofs{\mbox{\scriptsize $\overline{f}$}}
\def\oG{\mbox{$\overline{G}$}}
\def\ovarphi{\mbox{$\overline{\varphi}$}}
\def\PO{\mbox{$\mathbb P$}}

\def\hnm{\mbox{$h_{n-}$}}
\def\hnp{\mbox{$h_{n+}$}}

\def\hnme#1{\mbox{$h_{n-}^{#1}$}}
\def\hnpe#1{\mbox{$h_{n+}^{#1}$}}

\def\dst#1{\mbox{$d^*_{#1}$}}

\def\hst#1{\mbox{$h^*_{#1}$}}

\def\hnm{\mbox{$h_{n-}$}}
\def\hnp{\mbox{$h_{n+}$}}
\def\hsnm{\mbox{$h^*_{n-}$}}
\def\hsnp{\mbox{$h^*_{n+}$}}

\def\tep{\mbox{$\tilde{\varepsilon}$}}
\def\tiM{\mbox{$\widetilde{M}$}}

\def\xinm{\mbox{$\xi_{n-}$}}
\def\xinp{\mbox{$\xi_{n+}$}}

\def\dd{\mbox{$\mathscr D$}}

\def\NA{\mathbb N}

\def\RE{\mathbb R}

\begin{document}
\title{\bf Analysis of difference schemes for the Fokker\=/Planck angular diffusion operator}
\author{
\'Oscar L\'opez Pouso\thanks{Department of Applied Mathematics, Faculty of Mathematics, University of Santiago de
Com\-pos\-te\-la, Santiago de Com\-pos\-te\-la (A Co\-ru\-\~na), Spain. Email: {\tt oscar.lopez@usc.es}.}\\
Javier Segura\thanks{Departamento de Matem\'aticas, Estad\'{\i}stica y Computaci\'on. Universidad de Cantabria, San\-tan\-der, Spain. Email: {\tt javier.segura@unican.es}.}}

\maketitle

\abstract{This paper is dedicated to the mathematical analysis of finite difference schemes for the angular diffusion operator present in the azimuth\=/independent Fokker\=/Planck equation. The study elucidates the reasons behind the lack of convergence in half range mode for certain widely recognized discrete ordinates methods, and establishes sets of sufficient conditions to ensure that the schemes achieve convergence of order $2$. In the process, interesting properties regarding Gaussian nodes and weights, which until now have remained unnoticed by mathematicians, naturally emerge.}

\vspace{0.4cm}

\noindent{\bf MSC 2020:} Primary: 65D25; Secondary: 35K65, 35Q84, 65Z05, 78A35.



\noindent{\bf Keywords:} Fokker\=/Planck angular diffusion operator, numerical differentiation, discrete ordinates method, charged particles, light propagation.

\section{Introduction}\label{intro}
The following acronyms will be used:
\begin{itemize}
\item DOM: discrete ordinates method.
\item FP, FPE: Fokker\=/Planck, Fokker\=/Planck equation.
\item GL: Gauss\=/Legendre (quadrature rule in $(-1,1)$).
\item PDE: partial differential equation.
\end{itemize}

\par This paper focuses on analyzing difference schemes that discretize the FP angular diffusion operator in the azimuth\=/independent case
\begin{equation}
\label{AICSO}
\Delta_{\rm FP} f(\mu) = (\dd(\mu) f'(\mu))',\quad \mu \in [-1,1],
\end{equation}
where $\dd(\mu) = 1 - \mu^2$.

\par This operator is important because it is a fundamental part of the FPE. In turn, the FPE is a forward\=/backward parabolic PDE, highly significant in the field of nuclear engineering, in which $f$ represents the angular flux of particles, while $\mu$, which is the cosine of the polar angle, determines the direction of particle propagation. Interested readers can refer to various references, including \cite{GALP23}, \cite{LPJU16}, or \cite{LPJU21}, to delve deeper into this topic. The term FP angular diffusion operator is also known by other names such as {\em continuous scattering operator}, {\em FP Laplacian}, {\em Laplacian on the unit sphere}, {\em spherical Laplacian}, or {\em Laplace-Beltrami operator}.

\par A commonly employed technique for solving the FPE is the use of a DOM, which discretizes the operator (\ref{AICSO}) by utilizing a suitably selected set of nodes. Although various choices are possible, a frequently adopted approach is to use the GL nodes. In this paper, DOM discretizations that use GL nodes will be referred to as {\em GL schemes}.

\par This work originated with the primary intention of carrying out a mathematical analysis of the GL scheme proposed by Morel in \cite{MO85}. To conduct this analysis, it has been valuable for us to define two categories of schemes referred to as {\em type I} and {\em type II}. Morel's scheme belongs to the type II category, whereas type I schemes encompass two other well\=/known DOMs which are again GL schemes: the one employed by Antal, Lee, Mehlhorn, and Duderstadt in \cite{ANLE76}, \cite{LE62} and \cite{MEDU80}, and the one utilized by Haldy and Ligou in \cite{HALI80}.

\par The main objective is to establish the convergence of these schemes with second\=/order accuracy. While addressing this problem is relatively straightforward when considering uniform meshes, it becomes significantly more challenging when the nodes are not equally spaced, such as in the case of GL schemes. Type II schemes present an additional difficulty in that they deviate from the conventional formulation of numerical differentiation formulas. This is because they do not use exact values of $\dd$, but rather convenient approximations.

\par The present work focuses on studying discretizations of the operator (\ref{AICSO}) in isolation, which allows bringing to light the characteristics of the approximations and carrying out a clearer analysis of them.

\par We notice that the computing power of current PCs, together with recent research that allows the calculation of nodes and weights of GL formulas with millions of nodes in a few seconds of laptop time (see \cite{GISETE21} and references therein), makes it possible to program GL schemes without too much cost even when the number of nodes is large.

\par For the purposes of this study, the term {\em diffusivity} will be used to refer to $\dd$, recognizing that this decision entails some linguistic flexibility, given that $\dd$ originates from the mathematical expression of the spherical Laplacian and does not directly represent any physical property of the medium.

\par Many of the ideas presented herein can also be used if $\dd(\mu)$ is different from $1 - \mu^2$, as long as it satisfies some natural conditions.

\par After the elementary remainder that the reader will find in Section \ref{SECTION-elementary}, this paper is structured as follows:
\begin{itemize}
\item Section \ref{SECTION-mesh} focuses on defining the specific type of meshes considered in the paper and on setting the properties they must satisfy.

\item In Section \ref{SECTION-properties-GL}, we review established properties of GL nodes and weights, while also presenting novel properties discovered during the study of the schemes in this article. These additional properties play a crucial role in proving that some important schemes converge with order $2$.

\item Section \ref{SECTION-underlying} comprises two lemmas that serve as the foundation for proving the main results in subsequent sections.

\item Section \ref{SECTION-general-comments} explains the concepts of convergence of order $p$, full and half range mode, and preservation of moments.

\item Sections \ref{SECTION-type-I} and \ref{SECTION-type-II} form the core of the paper, providing a detailed description and analysis of type I and type II schemes, respectively, accompanied by numerical results.

\item Section \ref{SECTION-conclusions} finishes the paper by summarizing the findings and drawing overall conclusions.
\end{itemize}

\section{An elementary reminder}\label{SECTION-elementary}
Let $\mu$ be an interior point, i.e., $\mu \in (-1,1)$ and let us understand that, for a general function $G$ and small $h > 0$, $\oG_s = G(\mu + sh)$.

\par It will be useful to keep in mind that the classical formula
\begin{equation}
\label{interior-classical} \Delta_{\rm FP} f(\mu) \approx \frac{\oD_{-1/2} \of_{-1} - (\oD_{-1/2} + \oD_{1/2}) \of_0 + \oD_{1/2} \of_1}{h^2}
\end{equation}
can be interpreted as the outcome of repeatedly applying, with step\=/size $h/2$, the centered formula for the first derivative:
\begin{equation}
\label{first-derivative-c}
\varphi'(\mu) = \frac{\ovarphi_1 - \ovarphi_{-1}}{2h} + E(h).
\end{equation}

\par Indeed, (\ref{interior-classical}) follows from
\begin{equation}
\label{i-classical-deduction}
\Delta_{\rm FP} f(\mu) \approx \frac{\oD_{1/2} \ofprime_{1/2} - \oD_{-1/2} \ofprime_{-1/2}}{h} \approx \frac{\oD_{1/2} \frac{\ofs_1 - \ofs_0}{h} - \oD_{-1/2} \frac{\ofs_0 - \ofs_{-1}}{h}}{h}.
\end{equation}

\par If $\varphi \in {\rm C}^3([-1,1])$, the formula (\ref{first-derivative-c}) achieves order $2$, i.e., $E(h) = O(h^2)$. However, one cannot infer from this property that the formula (\ref{interior-classical}) also possesses second-order accuracy. This is because the presence of $h$ in the denominator of the last fraction in Equation (\ref{i-classical-deduction}) could make the order decay down to $1$. Fortunately, this undesired effect does not occur, and the following theorem holds. The proof, which relies on Taylor expansions, is omitted here since this is a well\=/established result.

\begin{thm}\label{thm-interior-classical}
If $f \in {\rm C}^4([-1,1])$, then the differentiation formula (\ref{interior-classical}) has order $2$, i.e.,
\begin{equation}
\Delta_{\rm FP} f(\mu) = \frac{\oD_{-1/2} \of_{-1} - (\oD_{-1/2} + \oD_{1/2}) \of_0 + \oD_{1/2} \of_1}{h^2} + O(h^2).
\end{equation}
\end{thm}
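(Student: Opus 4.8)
The plan is to prove the statement by Taylor expansion, but organized around the parity (in $h$) of the building blocks, which is what makes transparent why the error is $O(h^2)$ and not merely $O(h)$. First I would telescope the numerator of (\ref{interior-classical}) into
\[
N \;=\; \oD_{-1/2}\bigl(\of_{-1}-\of_0\bigr) + \oD_{1/2}\bigl(\of_1-\of_0\bigr),
\]
pairing each diffusivity weight with the corresponding increment of $f$.

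The conceptual core is a symmetry argument. Writing $g(t)=\dd(\mu+t)$ and $F(t)=f(\mu+t)-f(\mu)$, one has $\oD_{\pm 1/2}=g(\pm h/2)$ and $\of_{\pm 1}-\of_0=F(\pm h)$, and the elementary identity $a_+b_+ + a_-b_- = \tfrac{1}{2}(a_++a_-)(b_++b_-)+\tfrac{1}{2}(a_+-a_-)(b_+-b_-)$ yields
\[
N = \tfrac{1}{2}\bigl(\oD_{1/2}+\oD_{-1/2}\bigr)\bigl(F(h)+F(-h)\bigr) + \tfrac{1}{2}\bigl(\oD_{1/2}-\oD_{-1/2}\bigr)\bigl(F(h)-F(-h)\bigr).
\]
Since $\oD_{1/2}\pm\oD_{-1/2}=g(h/2)\pm g(-h/2)$ and $F(h)\pm F(-h)$ are respectively even and odd in $h$, the expression $N$ is a sum of (even)$\times$(even) and (odd)$\times$(odd), hence an even function of $h$. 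This rules out any $h^1$ or $h^3$ term in $N$ and is exactly the mechanism that prevents the order from decaying to $1$ upon division by $h^2$, which was the danger flagged before the theorem.

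Next I would substitute the Taylor data. As $\dd$ is quadratic, $\oD_{\pm 1/2}=\dd\pm\tfrac{h}{2}\dd'+\tfrac{h^2}{8}\dd''$ holds exactly (with $\dd''=-2$), while the $C^4$ hypothesis gives $F(\pm h)=\pm h f'+\tfrac{h^2}{2}f''\pm\tfrac{h^3}{6}f'''+\tfrac{h^4}{24}f^{(4)}+o(h^4)$, all derivatives evaluated at $\mu$. Multiplying out the even and odd parts and collecting powers, the coefficient of $h^2$ in $N$ is $\dd f''+\dd' f'=(\dd f')'(\mu)=\Delta_{\rm FP}f(\mu)$; the next surviving term is of order $h^4$, with coefficient $\tfrac{1}{12}\dd f^{(4)}+\tfrac{1}{6}\dd' f'''+\tfrac{1}{8}\dd'' f''$. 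Dividing by $h^2$ then gives $N/h^2=\Delta_{\rm FP}f(\mu)+O(h^2)$, as claimed.

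The only technical care is the remainder bookkeeping under the minimal $C^4$ regularity: I would carry the expansion of $f$ with a bounded (Lagrange) fourth-order remainder and verify that each product of a bounded factor against an $O(h^4)$ term stays $O(h^4)$, so that after division by $h^2$ it contributes at most $O(h^2)$. No genuine obstacle arises here; the substantive content is the parity decomposition, whereas the rest is the routine Taylor bookkeeping that the paper rightly suppresses.
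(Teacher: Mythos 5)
Your proof is correct and follows essentially the same route as the paper, whose proof is omitted precisely because it is the standard one ``relying on Taylor expansions'': your telescoping plus even/odd polarization is simply a clean way of organizing that Taylor computation so the cancellation of the $h^1$ and $h^3$ terms is visible, and the bookkeeping checks out (the $h^2$ coefficient $\dd f'' + \dd' f' = (\dd f')'(\mu)$, the exact quadratic expansion of $\dd$, and the $O(h^4)$ control of the remainder under $f \in {\rm C}^4([-1,1])$). No gap remains.
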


\begin{remark}
Theorem \ref{thm-interior-classical} still holds if $\dd$ is replaced by any other diffusivity, as long as it belongs to ${\rm C}^3([-1,1])$.
\end{remark}

\par The differentiation formula (\ref{interior-classical}) can be applied at the interior points of a uniform mesh of $[-1,1]$ in a quite obvious way. Since, as said above, the GL nodes are not equally spaced, a broader framework is needed, and this will be the focus of the next sections.

\section{The mesh}\label{SECTION-mesh}
Considering the influence of the schemes utilized in nuclear engineering that served as a motivation for this work, we will focus exclusively on meshes comprising interior nodes. While, as exemplified in \cite{LPJU16}, it is feasible to devise schemes that incorporate $-1$ and $1$ as nodes, the study of such cases will be deferred for future research.

\par Specifically, we will consider several instances of the following situation: for every natural $N$, we want to approximate the operator (\ref{AICSO}) on a mesh of $N$ nodes $\mu_1^N,\dots,\mu_N^N$, located in the open interval $(-1,1)$ and not necessarily equally spaced,  with the aid of an auxiliary set of $N + 1$ points $\mu_{1/2}^N,\dots,\mu_{N+1/2}^N$, also not necessarily equally spaced. Note the difference in meaning between `node' and `point.'

\par The sets of nodes and points are supposed to be interlaced conforming to the following pattern:
\begin{equation}
\label{interlacing}
-1 = \mu_{1/2}^N < \mu_1^N < \mu_{1+1/2}^N < \cdots < \mu_{N-1/2}^N < \mu_N^N < \mu_{N+1/2}^N = 1.
\end{equation}

\begin{defi}
$M_N$ and $\tiM_N$ are the numbers defined by
\begin{align}
M_N = \max_{1 \leq n \leq N-1}\{\mu_{n+1}^N - \mu_n^N\},\\
\tiM_N = \max\{\mu_1^N + 1, M_N, 1 - \mu_N^N\}.
\end{align}
\end{defi}

\par The minimum requirement for $[\{\mu_n^N\}_{n=1}^N$, $N \in \NA]$ to be considered a collection of meshes of $[-1,1]$ is that
\begin{equation}\label{mesh-lim-is-0}
\lim_{N \to \infty} \tiM_N = 0,
\end{equation}
but here a stronger assumption is needed, namely that
\begin{equation}
\label{mesh-order-is-1}
\tiM_N = O(N^{-1}),
\end{equation}
as it happens for uniform meshes.

\begin{remark}
\label{rem-mesh-order-is-1}
Since $\mu_1^N + 1 + \sum_{n=1}^{N-1} (\mu_{n+1}^N - \mu_n^N) + 1 - \mu_N^N = 2$ by (\ref{interlacing}), it is sure that $2 \leq (N + 1)\tiM_N$, which in turn implies that it is impossible to have $\tiM_N = O(N^{-p})$ with $p > 1$. However, $\tiM_N$ could potentially be a $O(N^{-p})$ with $p \in (0,1)$ if one assumes only (\ref{interlacing}) and (\ref{mesh-lim-is-0}).
\end{remark}

\begin{remark}\label{remarkGLnodes}
According to Remark \ref{rem-mesh-order-is-1}, with (\ref{mesh-order-is-1}) we are supposing that the elements of $\{-1,\mbox{nodes},1\}$ are as close together as they can be, but this does not prevent the order $1$ from being exceeded locally; for example, GL nodes satisfy (\ref{mesh-order-is-1}) and accumulate quadratically at the end\=/points of $(-1,1)$; other examples can be furnished by applying appropriate functions to the nodes of a uniform mesh.
\end{remark}

\par It is clear that (\ref{mesh-order-is-1}) implies that
\begin{equation}
\label{nodes-order-is-1}
M_N = O(N^{-1}).
\end{equation}

\par The scheme (\ref{interior-classical}) can be easily adapted to this more general situation, and, naturally, we would like to get conditions which make the new scheme to have order $2$. Recalling Section \ref{SECTION-elementary}, one can correctly intuit in this regard that the hypotheses (\ref{interlacing}) and (\ref{mesh-order-is-1}) will not be enough, because $\mu_n$ and $\mu_{n+1/2}$ are not necessarily located at the center of the cells $[\mu_{n-1/2},\mu_{n+1/2}]$ and $[\mu_n,\mu_{n+1}]$. What may be less apparent is that these hypotheses not only fail to guarantee second\=/order convergence, but they are also insufficient to ensure mere convergence. Later we will prove that everything unfolds smoothly if $\mu_n$ and $\mu_{n+1/2}$ are {\em sufficiently close} to the mentioned central points as long as several appropriate assumptions are added to the picture.

\par Accordingly, we proceed by introducing a set of new conditions that build upon the existing hypotheses (\ref{interlacing}) and (\ref{mesh-order-is-1}), bringing us closer to the desired objective.

\begin{defi}\label{points-norm}
$M_N^* = \max_{1 \leq n \leq N}\{\mu_{n + 1/2}^N - \mu_{n-1/2}^N\}$.
\end{defi}

\par Since the elements of $\{-1,\mbox{nodes},1\}$ are supposed to be as close together as they can be, the hypothesis (\ref{interlacing}) implies that the same will happen to the points, that is,
\begin{equation}
\label{points-order-is-1}
M_N^* = O(N^{-1}).
\end{equation}

\par More precisely, the following lemma holds.

\begin{lemma}
\label{lemma-mesh-points-order-is-1}
Under the hypothesis (\ref{interlacing}), conditions (\ref{mesh-order-is-1}) and (\ref{points-order-is-1}) are equivalent.
\end{lemma}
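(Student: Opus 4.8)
The plan is to prove the equivalence by establishing two explicit one\=/sided bounds valid for every $N$, namely $M_N^* \leq 2\tiM_N$ and $\tiM_N \leq 2 M_N^*$. Once these are in hand, the equivalence of the $O(N^{-1})$ statements (\ref{mesh-order-is-1}) and (\ref{points-order-is-1}) is immediate, since each bound propagates an $O(N^{-1})$ estimate in the required direction. The whole argument rests on the single structural feature provided by the interlacing (\ref{interlacing}): reading the chain from left to right, nodes and points strictly alternate, so between two consecutive nodes there sits exactly one point, and between two consecutive points (counting $-1$ and $1$ as the extreme points $\mu_{1/2}$ and $\mu_{N+1/2}$) there sits exactly one node. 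Throughout I suppress the superscript $N$.

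For the implication (\ref{mesh-order-is-1}) $\Rightarrow$ (\ref{points-order-is-1}) I would fix $n$ and split the point gap at the enclosed node, $\mu_{n+1/2} - \mu_{n-1/2} = (\mu_{n+1/2} - \mu_n) + (\mu_n - \mu_{n-1/2})$, then bound each summand by a full node gap via the monotonicity of (\ref{interlacing}): for an interior index, $\mu_{n+1/2} - \mu_n < \mu_{n+1} - \mu_n \leq M_N$ and $\mu_n - \mu_{n-1/2} < \mu_n - \mu_{n-1} \leq M_N$, so that $\mu_{n+1/2} - \mu_{n-1/2} < 2M_N \leq 2\tiM_N$. The two extreme gaps ($n = 1$ and $n = N$) are handled identically, except that one of the two halves then becomes an endpoint term of $\tiM_N$: for instance $\mu_1 - \mu_{1/2} = \mu_1 + 1 \leq \tiM_N$ when $n = 1$, and $\mu_{N+1/2} - \mu_N = 1 - \mu_N \leq \tiM_N$ when $n = N$. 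Taking the maximum over $1 \leq n \leq N$ then yields $M_N^* \leq 2\tiM_N$.

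For the reverse implication (\ref{points-order-is-1}) $\Rightarrow$ (\ref{mesh-order-is-1}) I would bound the three quantities inside $\tiM_N = \max\{\mu_1 + 1, M_N, 1 - \mu_N\}$ separately. The two end terms are each controlled by a single point gap: $\mu_1 + 1 = \mu_1 - \mu_{1/2} < \mu_{3/2} - \mu_{1/2} \leq M_N^*$ and $1 - \mu_N = \mu_{N+1/2} - \mu_N < \mu_{N+1/2} - \mu_{N-1/2} \leq M_N^*$. For $M_N$ I would mirror the previous step, now splitting each node gap at the enclosed point, $\mu_{n+1} - \mu_n = (\mu_{n+1} - \mu_{n+1/2}) + (\mu_{n+1/2} - \mu_n)$, and bounding each half by a point gap, $\mu_{n+1} - \mu_{n+1/2} < \mu_{n+3/2} - \mu_{n+1/2} \leq M_N^*$ and $\mu_{n+1/2} - \mu_n < \mu_{n+1/2} - \mu_{n-1/2} \leq M_N^*$, whence $M_N \leq 2M_N^*$. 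Combining the three bounds gives $\tiM_N \leq 2M_N^*$, which closes the equivalence.

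The argument is elementary, so the only place demanding care is the book\=/keeping at the boundary and with the index ranges: one must verify that the point gaps $\mu_{3/2} - \mu_{1/2}$ and $\mu_{N+1/2} - \mu_{N-1/2}$ invoked above genuinely fall within the range $1 \leq n \leq N$ of Definition \ref{points-norm}, and that the node gaps $\mu_{n+3/2} - \mu_{n+1/2}$ used in the reverse direction stay within the range $1 \leq n \leq N-1$ (the case $n = N-1$ using $\mu_{N+1/2} = 1$). These checks are routine. I expect no genuine obstacle here; the factor $2$ appearing in both bounds is harmless, since only the orders and not the constants are relevant to the stated equivalence.
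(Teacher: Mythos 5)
Your proposal is correct and takes essentially the same route as the paper: the paper's own proof consists precisely of the two inequalities $M_N^* \leq 2\tiM_N$ and $\tiM_N \leq 2M_N^*$, declared to be readily delivered by (\ref{interlacing}), and your splitting of each point gap at the enclosed node (respectively each node gap at the enclosed point) simply supplies those routine details, including the correct handling of the boundary indices. One terminological slip only: the quantities $\mu_{n+3/2}^N - \mu_{n+1/2}^N$ that you call ``node gaps'' in your final paragraph are point gaps.
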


\begin{proof}
Simply notice that (\ref{mesh-order-is-1}) implies (\ref{points-order-is-1}) because $M_N^* \leq 2 \tiM_N$ and (\ref{points-order-is-1}) implies (\ref{mesh-order-is-1}) because $\tiM_N \leq 2 M_N^*$. Both inequalities are readily delivered from hypothesis (\ref{interlacing}).
\end{proof}

\begin{defi}\label{def-secondary}
The set of secondary nodes $\{\hmu_n^N\}_{n = 1}^N$ is defined as follows:
\begin{equation}
\hmu_n^N = (\mu_{n-1/2}^N + \mu_{n+1/2}^N)/2,
\end{equation}
i.e., $\hmu_n^N$ is the mid\=/point of the cell $[\mu_{n-1/2}^N,\mu_{n+1/2}^N]$.

\par The set of secondary points $\{\hmu_{n+1/2}^N\}_{n = 1}^{N-1}$ is defined as follows:
\begin{equation}
\hmu_{n+1/2}^N = (\mu_n^N + \mu_{n+1}^N)/2,
\end{equation}
i.e., $\hmu_{n+1/2}^N$ is the mid\=/point of the cell $[\mu_n^N,\mu_{n+1}^N]$.
\end{defi}

\begin{defi}\label{DNstar}
$D_N^\star = \max_{1 \leq n \leq N}|\hmu_n^N - \mu_n^N|$.
\end{defi}

\begin{defi}\label{DN}
$D_N = \max_{1 \leq n \leq N-1}|\hmu_{n+1/2}^N - \mu_{n+1/2}^N|$.
\end{defi}

\par The following result holds.

\begin{lemma}
\label{lemma-mid-points}
Under the hypotheses (\ref{interlacing}) and (\ref{mesh-order-is-1}), there exist $q \geq 1$ and $r \geq 1$ such that
\begin{align}
\label{mid-point-1} D_N^\star & = O(N^{-q}),\\
\label{mid-point-2} D_N & =  O(N^{-r}).
\end{align}
\end{lemma}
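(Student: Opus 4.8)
The plan is to establish the stronger and simpler fact that both quantities are $O(N^{-1})$, so that the choice $q = 1$, $r = 1$ already verifies the claim. The single elementary ingredient is that any point contained in a closed interval lies within half the interval's length of its midpoint: if $x \in [a,b]$ and $m = (a+b)/2$, then $|x - m| \leq (b-a)/2$, the bound being attained at the endpoints.

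First I would treat (\ref{mid-point-1}). By the interlacing (\ref{interlacing}), each node $\mu_n^N$ lies strictly inside the cell $[\mu_{n-1/2}^N, \mu_{n+1/2}^N]$, whose midpoint is exactly $\hmu_n^N$. The elementary remark then gives $|\hmu_n^N - \mu_n^N| \leq \tfrac{1}{2}(\mu_{n+1/2}^N - \mu_{n-1/2}^N) \leq \tfrac{1}{2} M_N^*$, and taking the maximum over $n$ yields $D_N^\star \leq \tfrac{1}{2} M_N^*$. By Lemma \ref{lemma-mesh-points-order-is-1}, hypothesis (\ref{mesh-order-is-1}) is equivalent to (\ref{points-order-is-1}), so $M_N^* = O(N^{-1})$ and hence $D_N^\star = O(N^{-1})$, which is (\ref{mid-point-1}) with $q = 1$.

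The argument for (\ref{mid-point-2}) is entirely symmetric, now using the node cells rather than the point cells: the point $\mu_{n+1/2}^N$ lies strictly inside $[\mu_n^N, \mu_{n+1}^N]$, whose midpoint is $\hmu_{n+1/2}^N$, so $|\hmu_{n+1/2}^N - \mu_{n+1/2}^N| \leq \tfrac{1}{2}(\mu_{n+1}^N - \mu_n^N) \leq \tfrac{1}{2} M_N$. Taking the maximum over $n$ gives $D_N \leq \tfrac{1}{2} M_N$, and since (\ref{mesh-order-is-1}) implies (\ref{nodes-order-is-1}), i.e.\ $M_N = O(N^{-1})$, we conclude $D_N = O(N^{-1})$, which is (\ref{mid-point-2}) with $r = 1$.

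There is no genuine obstacle in this lemma; the only point demanding a little care is the reading of the statement. Because the conclusion merely asserts the existence of exponents $q, r \geq 1$, the uniform first-order bounds just obtained are already sufficient. The deliberately weak, existential formulation is what later permits special node families — such as the GL nodes of Remark \ref{remarkGLnodes} — to be slotted in with possibly larger local exponents without any restatement of the lemma.
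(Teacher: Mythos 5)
Your proof is correct and follows essentially the same route as the paper's: bound $|\hmu_n^N - \mu_n^N|$ by the size of the cell $[\mu_{n-1/2}^N,\mu_{n+1/2}^N]$ and $|\hmu_{n+1/2}^N - \mu_{n+1/2}^N|$ by the size of $[\mu_n^N,\mu_{n+1}^N]$, then invoke $M_N^* = O(N^{-1})$ and $M_N = O(N^{-1})$ to get $q = r = 1$. The only difference is cosmetic: by exploiting that $\hmu_n^N$ and $\hmu_{n+1/2}^N$ are exact midpoints you obtain the constants $\tfrac{1}{2}M_N^*$ and $\tfrac{1}{2}M_N$, whereas the paper settles for $M_N^*$ and $M_N$, which is immaterial for the $O(\cdot)$ conclusion.
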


\begin{proof}
Due to (\ref{interlacing}), it is sure that $\hmu_n^N, \mu_n^N \in (\mu_{n-1/2}^N,\mu_{n+1/2}^N)\ \mbox{for}\ n = 1,\dots,N$. So,
\begin{equation}
\max_{1 \leq n \leq N}|\hmu_n^N - \mu_n^N| \leq M_N^*.
\end{equation}
Now (\ref{mid-point-1}) is implied by (\ref{points-order-is-1}).

\par Analogously, (\ref{mid-point-2}) is implied by the inequality
\begin{equation}
\max_{1 \leq n \leq N-1}|\hmu_{n+1/2}^N - \mu_{n+1/2}^N| \leq M_N
\end{equation}
and (\ref{nodes-order-is-1}).
\end{proof}

\par However, $D_N^\star = O(N^{-1})$ and $D_N =  O(N^{-1})$ are not enough for ensuring quadratic convergence. To achieve this goal, we will make the assumption that both $q$ and $r$ in (\ref{mid-point-1}) and (\ref{mid-point-2}) are not less than $2$:
\begin{align}
\label{mid-point-1-hyp} D_N^\star & = O(N^{-q})\ \mbox{with}\ q \geq 2,\\
\label{mid-point-2-hyp} D_N & =  O(N^{-r})\ \mbox{with}\ r \geq 2.
\end{align}

\begin{defi}\label{min-points}
$m_N^* = \min_{1 \leq n \leq N}\{\mu_{n + 1/2}^N - \mu_{n-1/2}^N\}$.
\end{defi}

\par The hypotheses that we have enunciated so far are necessary to have convergence of order $2$. On the contrary, there are signs that the one that comes now could be weakened if $f$ were regular enough. It is not very restrictive though, and simplifies the proofs that will come later. Specifically, it will be assumed that
\begin{equation}
\label{hyp-min-points}
\frac{1}{m_N^*} = O(N^s)\ \mbox{with}\ 1 \leq s \leq 4m - 2,\ \mbox{where}\ m = \min\{q,r\}.
\end{equation}

\begin{remark}
Notice that $s < 1$ is impossible because the trivial equality $\sum_{n=1}^{N} (\mu_{n + 1/2}^N - \mu_{n-1/2}^N) = 2$ implies that $1/m_N^* \geq N/2$. The upper bound $4m - 2$ prevents $m_N^*$ from decreasing too fast, but the rate of decrease could still be considerably high, since $4 m - 2 \geq 6$. This is why we say above that this hypothesis is not very restrictive.
\end{remark}

\par The following lemma will be useful. Its proof is simple from Definition \ref{def-secondary} and is omitted.

\begin{lemma}
\label{lemma-mean-value}
For $n = 2,\dots,N-1$,
\begin{equation}\label{eq-lemma-mean-value}
\frac{\mu_{n-1}^N + \mu_{n+1}^N}{2} - \mu_n^N = (\hmu_{n-1/2}^N - \mu_{n-1/2}^N) + (\hmu_{n+1/2}^N - \mu_{n+1/2}^N) + 2(\hmu_n^N - \mu_n^N).
\end{equation}

\par Hence, under the hypotheses (\ref{mid-point-1-hyp}) and (\ref{mid-point-2-hyp}),
\begin{equation}
\max_{2 \leq n \leq N-1}\left| \frac{\mu_{n-1}^N + \mu_{n+1}^N}{2} - \mu_n^N \right| = O(N^{-m}),\ \mbox{with}\ m = \min\{q,r\} \geq 2.
\end{equation}
\end{lemma}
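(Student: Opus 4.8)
The plan is to prove the two assertions in turn, the identity (\ref{eq-lemma-mean-value}) being entirely algebraic and the estimate following from it by the triangle inequality. For the identity, I would simply substitute the definitions of the secondary nodes and points from Definition \ref{def-secondary} into the three summands on the right\=/hand side. Writing $\hmu_{n-1/2}^N - \mu_{n-1/2}^N = \tfrac{1}{2}(\mu_{n-1}^N + \mu_n^N) - \mu_{n-1/2}^N$, then $\hmu_{n+1/2}^N - \mu_{n+1/2}^N = \tfrac{1}{2}(\mu_n^N + \mu_{n+1}^N) - \mu_{n+1/2}^N$, and finally $2(\hmu_n^N - \mu_n^N) = \mu_{n-1/2}^N + \mu_{n+1/2}^N - 2\mu_n^N$, one adds the three expressions and observes that the point values $\mu_{n-1/2}^N$ and $\mu_{n+1/2}^N$ cancel in pairs. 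What survives is $\tfrac{1}{2}(\mu_{n-1}^N + \mu_n^N) + \tfrac{1}{2}(\mu_n^N + \mu_{n+1}^N) - 2\mu_n^N = \tfrac{1}{2}(\mu_{n-1}^N + \mu_{n+1}^N) - \mu_n^N$, which is precisely the left\=/hand side. So the identity holds exactly, with no remainder.

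For the estimate I would apply the triangle inequality directly to (\ref{eq-lemma-mean-value}). The first two summands are each a difference between a secondary point and its point, so by Definition \ref{DN} each is bounded in absolute value by $D_N$; the third summand is twice a difference between a secondary node and its node, bounded by $2 D_N^\star$ via Definition \ref{DNstar}. This yields the uniform bound
\[
\left| \frac{\mu_{n-1}^N + \mu_{n+1}^N}{2} - \mu_n^N \right| \leq 2 D_N + 2 D_N^\star
\]
valid for every $n = 2,\dots,N-1$. Taking the maximum over $n$ on the left and invoking the hypotheses (\ref{mid-point-1-hyp}) and (\ref{mid-point-2-hyp}), namely $D_N^\star = O(N^{-q})$ and $D_N = O(N^{-r})$, the right\=/hand side is $O(N^{-q}) + O(N^{-r}) = O(N^{-m})$ with $m = \min\{q,r\}$, which is the claim.

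There is no genuine obstacle here; the only point requiring a little care is the range of the index $n$. The decomposition uses $\hmu_{n-1/2}^N$ and $\hmu_{n+1/2}^N$, and these secondary points are defined only for indices in $\{1+1/2,\dots,(N-1)+1/2\}$ by Definition \ref{def-secondary}; the requirement that both $\hmu_{n-1/2}^N$ and $\hmu_{n+1/2}^N$ exist forces exactly $n \in \{2,\dots,N-1\}$, which is why the statement excludes the two extreme nodes. Since the identity is exact and the bound is uniform in $n$, no further estimation is needed.
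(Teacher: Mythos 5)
Your proof is correct and is precisely the argument the paper intends: the paper omits the proof, stating only that it is ``simple from Definition \ref{def-secondary},'' and your verification of the identity by substituting the midpoint definitions, followed by the triangle inequality with Definitions \ref{DNstar} and \ref{DN}, is exactly that routine argument. Your observation about why the index range is $n \in \{2,\dots,N-1\}$ is a correct and worthwhile detail.
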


\section{Properties of GL nodes and weights}\label{SECTION-properties-GL}
Here we collect a brief list of facts about GL quadrature that will be needed later. Symmetry of weights and antisymmetry of nodes with respect to $0$ are assumed to be known.

\par The following result expresses in a formal way what was said about GL nodes in Remark \ref{remarkGLnodes}. Whenever GL nodes are mentioned, it must be understood that they are arranged in increasing order.

\begin{prop}\label{GLnodesOK}
If $\{\mu_n^N\}_{n=1}^N$ are the GL nodes, then the following assertions, where the exponents $1$ and $2$ are optimal, hold:
\begin{enumerate}
\item[(A)] $\tiM_N = O(N^{-1})$, that is, hypothesis (\ref{mesh-order-is-1}) holds.

\item[(B)] For any fixed natural $k$,
\begin{equation}
0 < \mu_1^N + 1 < \mu_2^N - \mu_1^N < \cdots < \mu_k^N - \mu_{k-1}^N\quad \mbox{if}\quad N \geq 2k
\end{equation}
and
\begin{equation}
\mu_k^N + 1 = 1 - \mu_{N-k+1}^N = O(N^{-2}).
\end{equation}
\end{enumerate}
\end{prop}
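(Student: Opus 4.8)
The plan is to pass to the angular variable $\mu=\cos\phi$, $\phi\in(0,\pi)$, under which the GL nodes become the zeros of $P_N(\cos\phi)$, and to exploit the Liouville\=/transformed Sturm--Liouville equation satisfied by $u(\phi)=(\sin\phi)^{1/2}P_N(\cos\phi)$, namely $u''+g(\phi)u=0$ with $g(\phi)=(N+\tfrac12)^2+\tfrac{1}{4\sin^2\phi}$. Since GL nodes are antisymmetric, it suffices to study the zeros accumulating near $\mu=-1$; writing $\mu_k^N=-\cos\psi_k$ with $0<\psi_1<\psi_2<\cdots$ small, the identity $\mu_k^N+1=1-\cos\psi_k=2\sin^2(\psi_k/2)$ together with $\mu_{N-k+1}^N=-\mu_k^N$ immediately yields the symmetry $\mu_k^N+1=1-\mu_{N-k+1}^N$, reducing everything to estimates on the $\psi_k$.

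For assertion (A) I would first bound the maximal gap. Since $g(\phi)\ge(N+\tfrac12)^2$, Sturm's comparison theorem gives a uniform bound on the consecutive angular gaps, $\phi_n-\phi_{n+1}\le \pi/(N+\tfrac12)$, and because $|\cos'|\le1$ the mean value theorem yields $\mu_{n+1}^N-\mu_n^N\le\pi/(N+\tfrac12)$, hence $M_N=O(N^{-1})$. Combined with the endpoint estimate $\mu_1^N+1=1-\mu_N^N=O(N^{-2})$ obtained below, this gives $\tiM_N=O(N^{-1})$. Optimality of the exponent $1$ is then free: Remark \ref{rem-mesh-order-is-1} already forces $\tiM_N\ge 2/(N+1)$.

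For assertion (B), the endpoint bound $\mu_k^N+1=O(N^{-2})$ follows from the classical fact that, for each fixed $k$, the small angular zeros obey $\psi_k=j_{0,k}/(N+\tfrac12)+O(N^{-3})$, where $j_{0,k}$ is the $k$\=/th positive zero of $J_0$; then $\mu_k^N+1=2\sin^2(\psi_k/2)\asymp\psi_k^2\asymp N^{-2}$, which also shows the exponent $2$ is optimal (the rate is $\asymp N^{-2}$, not merely $O(N^{-2})$). For the monotone chain I would note that the hypothesis $N\ge 2k$ keeps $\psi_1,\dots,\psi_k$ below $\pi/2$, i.e.\ in the region where $g$ is a decreasing function of $\psi$; Sturm's convexity theorem then makes the angular gaps $\psi_{j+1}-\psi_j$ increasing. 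Writing each node gap as
\[
\mu_{j+1}^N-\mu_j^N=\cos\psi_j-\cos\psi_{j+1}=2\sin\!\Big(\tfrac{\psi_j+\psi_{j+1}}{2}\Big)\sin\!\Big(\tfrac{\psi_{j+1}-\psi_j}{2}\Big),
\]
both sine factors are increasing in $j$ (the first because the $\psi_j$ increase, the second by convexity) and lie in $(0,\pi/2)$, so the product increases, giving $\mu_j^N-\mu_{j-1}^N<\mu_{j+1}^N-\mu_j^N$ for all $j\ge2$.

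The main obstacle is the very first link of the chain, $\mu_1^N+1<\mu_2^N-\mu_1^N$, which is not delivered by convexity because it compares the distance to the endpoint with the first interior gap. Expanding both sides it amounts to $\sin^2(\psi_1/2)<\sin(\tfrac{\psi_1+\psi_2}{2})\sin(\tfrac{\psi_2-\psi_1}{2})$, i.e.\ essentially $\psi_2>\sqrt2\,\psi_1$; the asymptotic ratio $\psi_2/\psi_1\to j_{0,2}/j_{0,1}\approx2.29$ makes this plausible, but establishing it for every $N\ge4$ requires sharp, non\=/asymptotic two\=/sided bounds on the first two zeros (of Bessel\=/zero type, with controlled error) rather than leading\=/order asymptotics. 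I expect this quantitative control of $\psi_1$ and $\psi_2$ to be the technical heart of the proof; once it is in place, the remaining inequalities and both optimality statements follow from the Sturm\=/theoretic arguments above.
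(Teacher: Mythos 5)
First, a point of reference: the paper does not actually prove Proposition \ref{GLnodesOK}. It declares assertions (A) and (B) to be ``established facts'' whose proofs can be derived from results in \cite{SZE75}, so there is no in-paper argument to compare yours against; your proposal is an attempt at the proof the paper delegates to the literature. Your toolkit is exactly the classical one of that reference: the Liouville transform $u=(\sin\phi)^{1/2}P_N(\cos\phi)$ solving $u''+\bigl[(N+\tfrac12)^2+\tfrac14\sin^{-2}\phi\bigr]u=0$, Sturm comparison against $v''+(N+\tfrac12)^2v=0$ for the $O(N^{-1})$ gap bound, Sturm convexity on $(0,\pi/2]$ (where the coefficient is decreasing) for the monotone chain, and the Bessel limit $N\psi_k\to j_{0,k}$ for the two-sided $N^{-2}$ endpoint behaviour. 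Those steps are sound, including the optimality of the exponent $1$ via Remark \ref{rem-mesh-order-is-1}, the reduction to one endpoint by antisymmetry, and the conversion of increasing angular gaps into increasing node gaps through $\cos\psi_j-\cos\psi_{j+1}=2\sin\bigl(\tfrac{\psi_j+\psi_{j+1}}{2}\bigr)\sin\bigl(\tfrac{\psi_{j+1}-\psi_j}{2}\bigr)$.

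The genuine gap is the one you flag yourself: the first link $\mu_1^N+1<\mu_2^N-\mu_1^N$ is left unproven. However, your diagnosis of it --- that it needs sharp non-asymptotic Bessel-zero bounds on $\psi_1,\psi_2$ --- is mistaken; the tool you already deployed closes it. The key observation is that $\psi_0=0$ is itself a zero of the transformed function $\tilde u(\psi)=(\sin\psi)^{1/2}P_N(-\cos\psi)$, simply because of the factor $(\sin\psi)^{1/2}$. Sturm convexity applied to the three consecutive zeros $0<\psi_1<\psi_2$ then gives $\psi_2-\psi_1>\psi_1-0$, i.e.\ $\psi_2>2\psi_1$: compare $\tilde u$ with its shift $v(x)=\tilde u(x-\psi_1)$ on $(\psi_1,\psi_2)$, where $g(x-\psi_1)>g(x)$ since $g$ decreases on $(0,\pi/2]$; the only delicate point is that the shifted coefficient is singular at $x=\psi_1$, but the Wronskian boundary term there still vanishes because $\tilde u$ vanishes linearly at $\psi_1$ while $\tilde u'(\cdot-\psi_1)$ blows up only like an inverse square root, so the standard proof survives. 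The conclusion $\psi_2>2\psi_1$ is stronger than the $\psi_2>\sqrt2\,\psi_1$ you identified as the crux, and it finishes the first link exactly rather than in small-angle approximation: since $\tfrac{\psi_2-\psi_1}{2}>\tfrac{\psi_1}{2}$ and $\tfrac{\psi_1}{2}<\tfrac{3\psi_1}{2}<\tfrac{\psi_1+\psi_2}{2}\le\tfrac{\pi}{2}$, monotonicity of the sine on $[0,\pi/2]$ yields $2\sin^2(\psi_1/2)<2\sin\bigl(\tfrac{\psi_1+\psi_2}{2}\bigr)\sin\bigl(\tfrac{\psi_2-\psi_1}{2}\bigr)$, i.e.\ $\mu_1^N+1<\mu_2^N-\mu_1^N$, for every $N\ge4$ and with no asymptotics at all. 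The same prepended zero also gives $\psi_k<k\pi/(N+\tfrac12)$ directly from your gap bound, so the upper estimate $\mu_k^N+1=O(N^{-2})$ needs no Bessel input either; the Bessel limit (the weak form $N\psi_k\to j_{0,k}$ suffices, no $O(N^{-3})$ refinement is needed) is only required for the matching lower bound, that is, for optimality of the exponent $2$. With this repair your proposal becomes a complete proof.
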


\begin{remark}[meaning of `optimal exponent']
An equivalent way of saying that the exponent $1$ is optimal in the expression $\tiM_N = O(N^{-1})$ is to say that $\tiM_N = \Theta(N^{-1})$ (`Big Theta' of $N^{-1}$). Similarly, $\mu_k^N + 1 = \Theta(N^{-2})$.
\end{remark}

\begin{prop}\label{GLweightsOK}
Let $\{w_n^N\}_{n=1}^N$ be the set of GL weights and let $k$ be any fixed natural number. Then
\begin{equation}
0 < w_1^N < w_2^N < \cdots < w_k^N\quad \mbox{if}\quad N \geq 2k
\end{equation}
and
\begin{equation}
w_k^N = O(N^{-2}),
\end{equation}
being the exponent $2$ optimal.

\par Therefore,
\begin{equation}
\frac{1}{\min_{1 \leq n \leq N} w_n^N} = \max_{1 \leq n \leq N} \frac{1}{w_n^N} = \frac{1}{w_1^N} = \frac{1}{w_N^N} = O(N^2).
\end{equation}
\end{prop}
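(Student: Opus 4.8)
The plan is to read off the final displayed chain of equalities as a formal consequence of the two preceding claims, so that the real content is the monotonicity $0<w_1^N<\cdots<w_k^N$ and the two\=/sided estimate $w_k^N=\Theta(N^{-2})$. The symmetry of the weights gives $w_n^N=w_{N+1-n}^N$; once monotonicity is available, $w_1^N=w_N^N$ is the smallest weight, whence $\max_{1\leq n\leq N}1/w_n^N=1/w_1^N=1/w_N^N$, and the estimate $w_k^N=\Theta(N^{-2})$ with $k=1$ converts this into $O(N^2)$. I would therefore organise everything around these two facts, writing $x_n=\mu_n^N$ and using the classical representation $w_n^N=2/((1-x_n^2)[P_N'(x_n)]^2)$, where $P_N$ is the Legendre polynomial whose zeros are the nodes.

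For the monotonicity I would pass to the Liouville normal form of Legendre's equation. Setting $x=\cos\theta$ and $u(\theta)=(\sin\theta)^{1/2}P_N(\cos\theta)$, one gets $u''+\Phi u=0$ with $\Phi(\theta)=(N+\tfrac12)^2+\tfrac14\csc^2\theta$, and a short computation gives $u'(\theta_n)^2=(1-x_n^2)\sin\theta_n\,[P_N'(x_n)]^2$, so that $w_n^N=2\sin\theta_n/u'(\theta_n)^2$. The key device is the auxiliary function $F(\theta)=\Phi(\theta)u(\theta)^2+u'(\theta)^2$, for which $u''=-\Phi u$ yields $F'=\Phi'u^2$. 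On $(0,\pi/2)$ the coefficient $\Phi$ is strictly decreasing, hence $F'\leq 0$ and $F$ strictly decreases across any interval between consecutive zeros; at a zero one has $F(\theta_n)=u'(\theta_n)^2$. Ordering the nodes nearest $+1$ by increasing $\theta$, we see that $u'(\theta_n)^2=F(\theta_n)$ strictly decreases while $\sin\theta_n$ strictly increases, so $w_n^N=2\sin\theta_n/u'(\theta_n)^2$ strictly increases as the node moves inward; by symmetry this is exactly $w_1^N<\cdots<w_k^N$. The hypothesis $N\geq 2k$ is precisely what guarantees that the $k$ nodes nearest each endpoint lie in the region $\theta\in(0,\pi/2)$ (equivalently $x>0$) on which $\Phi$ is monotone.

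For the asymptotics I would first invoke Proposition \ref{GLnodesOK}(B) (together with symmetry): the $k$-th node from the endpoint satisfies $1-x=\Theta(N^{-2})$, hence $\theta^{(k)}=\Theta(N^{-1})$ and $\sin\theta^{(k)}=\Theta(N^{-1})$. It then remains to show $u'(\theta^{(k)})^2=\Theta(N)$, which I would extract from the Mehler\=/Heine/Hilb asymptotics $u(\theta)\sim\theta^{1/2}J_0((N+\tfrac12)\theta)$, valid for $\theta=O(N^{-1})$: differentiating and evaluating at the zero, where $(N+\tfrac12)\theta^{(k)}\to j_k$ (the $k$-th positive zero of $J_0$), gives $u'(\theta^{(k)})^2\sim(N+\tfrac12)\,j_k\,J_1(j_k)^2=\Theta(N)$. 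Combining, $w_k^N\sim 2/((N+\tfrac12)^2 J_1(j_k)^2)$, which is $O(N^{-2})$ and, since $J_1(j_k)\neq 0$, is bounded below by a positive multiple of $N^{-2}$; this simultaneously proves $w_k^N=\Theta(N^{-2})$ and the optimality of the exponent $2$.

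The monotonicity step is clean and self\=/contained. The delicate point, and the main obstacle, is the endpoint asymptotics: to obtain $\Theta(N^{-2})$ rather than merely $O(N^{-2})$ I need genuinely two\=/sided control of $u'$ (equivalently of $P_N'$) at the first few zeros, uniformly in $N$, which is where Hilb's formula with a controlled error term does the work; alternatively one may simply quote the classical asymptotic expansion of the Gauss\=/Legendre Cotes numbers from Szeg\H{o}'s monograph, which asserts exactly $w_{N+1-k}^N\sim 2/((N+\tfrac12)^2 J_1(j_k)^2)$. A bare upper bound $w_k^N=O(N^{-2})$ could instead be read off from the cruder estimate $w_n^N=O(\sin\theta_n/N)$ combined with $\sin\theta^{(k)}=O(N^{-1})$, but the matching lower bound required for optimality still forces the sharper endpoint analysis.
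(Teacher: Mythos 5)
The paper does not actually prove this proposition: it is stated as an established fact with a pointer to Szeg\H{o}'s monograph \cite{SZE75}, so there is no internal proof to compare against, and your argument is best read as a reconstruction of the classical derivation the paper outsources. Your reconstruction is correct. The Sturmian step is sound: in the Liouville normal form $u''+\Phi u=0$ with $\Phi(\theta)=(N+\tfrac12)^2+\tfrac14\csc^2\theta$, the function $F=\Phi u^2+(u')^2$ satisfies $F'=\Phi'u^2\le 0$ on $(0,\pi/2)$, so $u'(\theta_n)^2$ decreases and $\sin\theta_n$ increases as the nodes move inward, giving the strict growth of $w_n^N=2\sin\theta_n/u'(\theta_n)^2$; the hypothesis $N\ge 2k$ indeed serves only to keep the relevant nodes in $x>0$. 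Note that this step proves more than the proposition's first display, namely monotonicity across the whole half-range, and that extra strength is exactly what you need (and implicitly use) to assert $w_1^N=w_N^N=\min_{1\le n\le N}w_n^N$ in the final display, so it is worth stating explicitly. The one point you should not gloss over is the differentiation of Hilb's formula: an asymptotic relation with an $O$-error term cannot in general be differentiated term by term, so ``differentiating and evaluating at the zero'' requires justification. Two clean repairs are available: either quote Szeg\H{o}'s endpoint asymptotics for the Cotes numbers, $w_{N+1-k}^N\sim 2/\bigl((N+\tfrac12)^2J_1(j_k)^2\bigr)$, as you propose in your fallback (this is precisely the paper's implicit route), or use the identity $(1-x_n^2)P_N'(x_n)=N\,P_{N-1}(x_n)$ at a zero of $P_N$ and apply Mehler--Heine asymptotics to $P_{N-1}$, which gives the needed two-sided control of $P_N'$ at the extreme zeros without differentiating any expansion. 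With either repair, $w_k^N=\Theta(N^{-2})$ follows, and with it the optimality of the exponent and the $O(N^2)$ bound, exactly as you describe.
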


\paragraph{\bf Other properties of GL nodes and weights.} The statements in Propositions \ref{GLnodesOK} and \ref{GLweightsOK} are established facts (proofs can be derived from results in \cite{SZE75}), but, as far as we know, the properties that follow are new. We have become aware of them since they are inherent to schemes used in nuclear engineering such as Haldy\=/Ligou's or Morel's, to be described later. The reference \cite{LPSE23} gathers strong theoretical evidence that they are true, and all numerical experiments that we have carried out corroborate them, but as yet there is no complete mathematical proof available.

\begin{npGL}
Let us suppose that $\{\mu_n^N\}_{n=1}^N$ and $\{w_n^N\}_{n=1}^N$ are, respectively, the GL nodes and weights, and that the points $\{\mu_{n+1/2}^N\}_{n=0}^N$ are defined by
\begin{align}
& \mu_{1/2}^N = -1,\\
& \mu_{n+1/2}^N = \mu_{n-1/2}^N + w_n^N\ \mbox{for}\ n = 1,\dots,N.
\end{align}
Then
\begin{itemize}
\item The hypothesis (\ref{interlacing}) holds, and

\item The hypotheses (\ref{mid-point-1-hyp}) and (\ref{mid-point-2-hyp}) are met with $q = r = 2$, that is, $D_N^* = O(N^{-2})$ and $D_N = O(N^{-2})$. Accordingly, by Lemma \ref{lemma-mean-value},
\begin{equation}
\max_{2 \leq n \leq N-1}\left| \frac{\mu_{n-1}^N + \mu_{n+1}^N}{2} - \mu_n^N \right| = O(N^{-2}).
\end{equation}
\end{itemize}
\end{npGL}

\par For the sake of ease, the superscript $N$ will be omitted in what follows.

\section{The underlying formulas}\label{SECTION-underlying}
\begin{defi}
$\PO_k$, with $k \in \NA$, will be the real vector space of all polynomials with real coefficients having degree less than or equal to $k$.
\end{defi}

\begin{defi}[quantities of interest related to cell {$[\mu_n,\mu_{n+1}]$}]
\label{defi-h-and-d}
For $n = 1,\dots,N-1$:
\begin{align}
h_n & = (\mu_{n+1} - \mu_n)/2,\\
d_n & = \hmu_{n+1/2} - \mu_{n+1/2},\\
\hnm & = \mu_{n+1/2} - \mu_n,\\
\hnp & = \mu_{n+1} - \mu_{n+1/2}.
\end{align}
\end{defi}

\begin{defi}[quantities of interest related to cell {$[\mu_{n-1/2},\mu_{n+1/2}]$}]
\label{defi-h-and-d-star}
For $n = 1,\dots,N$:
\begin{align}
\hst{n} & = (\mu_{n+1/2} - \mu_{n-1/2})/2,\\
\dst{n} & = \hmu_n - \mu_n,\\
\hsnm & = \mu_n - \mu_{n-1/2},\\
\hsnp & = \mu_{n+1/2} - \mu_n.
\end{align}
\end{defi}

\begin{remark}
It is obvious that $\hsnm = h_{(n-1)+}$ if $n \in \{2,\dots,N\}$, and that $\hsnp = \hnm$ if $n \in \{1,\dots,N-1\}$.
\end{remark}

\begin{remark}
Due to (\ref{interlacing}), $h_n$, $\hnm$, $\hnp$, $\hst{n}$, $\hsnm$ and $\hsnp$ are always positive. On the other hand, $d_n$ and $\dst{n}$ can be positive, negative, or zero.
\end{remark}

\par The above Definitions \ref{defi-h-and-d} and \ref{defi-h-and-d-star} imply that, for $n = 1,\dots,N-1$,
\begin{align}
\hnm & = h_n - d_n,\\
\hnp & = h_n + d_n,\\
\hnm + \hnp & = 2 h_n = \mu_{n + 1} - \mu_n,\\
\hnp - \hnm & = 2 d_n,\\
\label{ds-and-hs} d_n + \dst{n} & = h_n - \hst{n},
\end{align}
for $n = 2,\dots,N$,
\begin{equation}
\label{ds-and-hs-BIS} d_{n-1} + \dst{n} = \hst{n} - h_{n-1},
\end{equation}
for $n = 2,\dots,N-1$,
\begin{align}
\label{odd} d_{n-1} - d_n = 2\hst{n} - (h_{n-1} + h_n),
\end{align}
and, for $n = 1,\dots,N$,
\begin{align}
\hsnm & = \hst{n} - \dst{n},\\
\hsnp & = \hst{n} + \dst{n},\\
\hsnm + \hsnp & = 2 \hst{n} = \mu_{n + 1/2} - \mu_{n - 1/2},\\
\hsnp - \hsnm & = 2 \dst{n}.
\end{align}

\par Also,
\begin{align}
M_N & = 2 \max_{1 \leq n \leq N-1} h_n,\\
M_N^* & = 2 \max_{1 \leq n \leq N} \hst{n},\\
D_N & = \max_{1 \leq n \leq N-1} |d_n|,\\
D_N^* & = \max_{1 \leq n \leq N} |\dst{n}|,\\
m_N^* & = 2 \min_{1 \leq n \leq N} \hst{n}.
\end{align}

\par In light of Section \ref{SECTION-elementary}, we will exploit the following two lemmas. We will use the notation $\| \psi \|_\infty = \max_{\mu \in [-1,1]} |\psi(\mu)|$, understanding that $\psi \in {\rm C}([-1,1])$. Also, the notations $\xi_{n-}^*$, $\xi_{n+}^*$, $\xinm$, $\xinp$ will stand for intermediate values appearing in the Lagrange form of the Taylor remainder. It will be important to bear in mind that $x^k + y^k$ is divisible by $x + y$ when $k$ is odd, and that $x^k - y^k$ is divisible by $x + y$ when $k$ is even.

\par
\begin{lemma}
\label{mother-formula-1}
Assume that the hypothesis (\ref{interlacing}) holds.

\par The approximation
\begin{equation}
\label{seed-1}
\varphi'(\mu_n) \approx \frac{\varphi(\mu_{n+1/2}) - \varphi(\mu_{n-1/2})}{\mu_{n+1/2} - \mu_{n-1/2}},\ n = 1,\dots,N,
\end{equation}
converges with order $2$ if, and only if, the hypotheses (\ref{mesh-order-is-1}) and (\ref{mid-point-1-hyp}) are met. More precisely, if $E_n^*(\varphi)$ is defined by
\begin{equation}
\label{EnstarDEF}
E_n^*(\varphi) = \varphi'(\mu_n) - \frac{\varphi(\mu_{n+1/2}) - \varphi(\mu_{n-1/2})}{\mu_{n+1/2} - \mu_{n-1/2}},\ n = 1,\dots,N,
\end{equation}
then
\begin{equation}
\max_{1 \leq n \leq N} |E_n^*(\varphi)| = O(N^{-2})\ \mbox{for all}\ \varphi \in {\rm C}^3([-1,1])
\end{equation}
if, and only if, the hypotheses (\ref{mesh-order-is-1}) and (\ref{mid-point-1-hyp}) are met.

\par The maximal possible order is $2$.

\par Moreover, the formula (\ref{seed-1}) is exact if $\varphi \in \PO_1$ or if [$D_N^* = 0$ and $\varphi \in \PO_2$].
\end{lemma}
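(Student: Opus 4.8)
The plan is to derive one exact error identity by Taylor expansion and then read off every claim from it. Using the cell quantities $h_n^* = (\mu_{n+1/2}-\mu_{n-1/2})/2$, $d_n^* = \hat\mu_n - \mu_n$, $h_{n-}^* = \mu_n - \mu_{n-1/2}$ and $h_{n+}^* = \mu_{n+1/2}-\mu_n$ from Definition \ref{defi-h-and-d-star}, I would expand $\varphi(\mu_{n+1/2})$ about $\mu_n$ with step $h_{n+}^*$ and $\varphi(\mu_{n-1/2})$ with step $-h_{n-}^*$, each to third order. Subtracting, the zeroth-order terms cancel, the first-order terms combine through $h_{n-}^* + h_{n+}^* = 2h_n^* = \mu_{n+1/2}-\mu_{n-1/2}$, and the second-order terms combine through $(h_{n+}^*)^2 - (h_{n-}^*)^2 = (h_{n+}^*+h_{n-}^*)(h_{n+}^*-h_{n-}^*) = 4h_n^* d_n^*$ (the even-exponent divisibility recalled just before the lemma). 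Dividing by $2h_n^*$ yields
\[
E_n^*(\varphi) = -\varphi''(\mu_n)\,d_n^* - \frac{1}{12\,h_n^*}\Bigl(\varphi'''(\xi_{n+}^*)\,(h_{n+}^*)^3 + \varphi'''(\xi_{n-}^*)\,(h_{n-}^*)^3\Bigr),
\]
and the whole lemma should fall out of this formula.

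For sufficiency I would bound the two terms separately. The first is $\le \|\varphi''\|_\infty\,|d_n^*| \le \|\varphi''\|_\infty\,D_N^\star = O(N^{-2})$ by (\ref{mid-point-1-hyp}). The second is the delicate one, being exactly the phenomenon flagged in Section \ref{SECTION-elementary}: a lone $h_n^*$ sits in the denominator and could in principle drop the order to $1$. The rescue is the odd-exponent divisibility $x^3 + y^3 = (x+y)(x^2 - xy + y^2)$: since $(h_{n+}^*)^3 + (h_{n-}^*)^3 = 2h_n^*\bigl((h_{n+}^*)^2 - h_{n+}^* h_{n-}^* + (h_{n-}^*)^2\bigr)$, the factor $h_n^*$ cancels, and because $h_{n-}^*,\,h_{n+}^* < 2h_n^* \le M_N^*$ the second term is $O((M_N^*)^2)\,\|\varphi'''\|_\infty = O(N^{-2})$ directly from (\ref{points-order-is-1}), which is equivalent to (\ref{mesh-order-is-1}) by Lemma \ref{lemma-mesh-points-order-is-1}. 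Hence $\max_n|E_n^*(\varphi)| = O(N^{-2})$ for every $\varphi\in{\rm C}^3([-1,1])$.

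For necessity I would feed two polynomials into the identity. With $\varphi(\mu)=\mu^2$ the remainder vanishes and $E_n^*(\mu^2) = -2d_n^*$, so the hypothesis $\max_n|E_n^*| = O(N^{-2})$ forces $D_N^\star = O(N^{-2})$, i.e. (\ref{mid-point-1-hyp}). With $\varphi(\mu)=\mu^3$, substituting $h_{n\pm}^* = h_n^* \pm d_n^*$ collapses the identity to $E_n^*(\mu^3) = -6\mu_n d_n^* - (h_n^*)^2 - 3(d_n^*)^2$; since the $d_n^*$-contributions are already $O(N^{-2})$ and $O(N^{-4})$, an upper-bound estimate gives $\max_n(h_n^*)^2 = O(N^{-2})$, hence $M_N^* = 2\max_n h_n^* = O(N^{-1})$, which is (\ref{points-order-is-1}) and so (\ref{mesh-order-is-1}). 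The same two test functions settle that the order cannot exceed $2$: assuming $\max_n|E_n^*| = O(N^{-2-\delta})$ for some $\delta>0$ would force first $\max_n|d_n^*| = O(N^{-2-\delta})$ and then $\max_n(h_n^*)^2 = O(N^{-2-\delta})$, i.e. $\max_n h_n^* = O(N^{-1-\delta/2})$, contradicting $\max_n h_n^* \ge 1/N$, which comes from the telescoping identity $\sum_{n=1}^N(\mu_{n+1/2}-\mu_{n-1/2}) = 2$. Finally, exactness is immediate: if $\varphi\in\PO_1$ then $\varphi'' = \varphi''' = 0$ and $E_n^*\equiv 0$; if $D_N^\star = 0$ (every $d_n^* = 0$) and $\varphi\in\PO_2$ (so $\varphi'''=0$) both terms vanish and again $E_n^*\equiv 0$.

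I expect the main obstacle to be conceptual rather than computational: recognising that the odd-exponent factorisation is what keeps the remainder at $O(N^{-2})$ \emph{using only} $M_N^* = O(N^{-1})$. A crude estimate $|(h_{n+}^*)^3 + (h_{n-}^*)^3|/h_n^* \le (M_N^*)^3/(m_N^*/2)$ would instead bring in $1/m_N^*$ and, through (\ref{hyp-min-points}), only deliver a weaker rate; the factorisation is precisely what lets the clean ``if and only if'' be phrased in terms of (\ref{mesh-order-is-1}) and (\ref{mid-point-1-hyp}) alone, with no appeal to $m_N^*$. The secondary care points are organisational: keeping the two halves of the equivalence cleanly separated, and ensuring the necessity direction genuinely uses the full strength ``for all $\varphi\in{\rm C}^3$'' by exhibiting the specific witnesses $\mu^2$ and $\mu^3$.
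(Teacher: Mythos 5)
Your proposal is correct and follows essentially the same route as the paper: the same third-order Taylor expansion yielding the identity $E_n^* = -d_n^*\varphi''(\mu_n) - \bigl((h_{n+}^*)^3\varphi'''(\xi_{n+}^*) + (h_{n-}^*)^3\varphi'''(\xi_{n-}^*)\bigr)/(12 h_n^*)$, the same odd-exponent factorisation to cancel the $h_n^*$ in the denominator, and the same two witnesses $\mu^2$ and $\mu^3$ for necessity and for the maximal-order claim (including the telescoping bound $M_N^* \geq 2/N$). The only differences are organisational — you argue necessity directly rather than by the paper's two-step contrapositive, and you unify the paper's two-case maximal-order argument into a single contradiction with exponent $2+\delta$ — but the substance is identical.
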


\begin{proof}
That the formula (\ref{seed-1}) is exact on $\PO_1$ is a triviality, although this fact will also be deduced, along with the rest of the conclusions, from the reasoning that follows.

\par We will write $E_n^*$ instead of $E_n^*(\varphi)$. Recall that, under (\ref{interlacing}), conditions (\ref{mesh-order-is-1}) and (\ref{points-order-is-1}) are equivalent by Lemma \ref{lemma-mesh-points-order-is-1}.

\par Take $\varphi \in {\rm C}^3([-1,1])$ and $n \in \{1,\dots,N\}$, and consider the Taylor expansions
\begin{multline}
\label{Tay-1} \varphi(\mu_{n + 1/2}) = \varphi(\mu_n) + \hsnp \varphi'(\mu_n) + \frac{(\hsnp)^2}{2} \varphi''(\mu_n)\\
+ \frac{(\hsnp)^3}{6} \varphi'''(\xi_{n+}^*),
\end{multline}
\begin{multline}
\label{Tay-2}\varphi(\mu_{n - 1/2}) = \varphi(\mu_n) - \hsnm \varphi'(\mu_n) + \frac{(\hsnm)^2}{2} \varphi''(\mu_n)\\
- \frac{(\hsnm)^3}{6} \varphi'''(\xi_{n-}^*).
\end{multline}

\par Subtracting (\ref{Tay-1}) and (\ref{Tay-2}) and dividing the result by $\mu_{n + 1/2} - \mu_{n - 1/2} = \hsnm + \hsnp$, we have
\begin{multline}
-E_n^* = \frac{\varphi(\mu_{n+1/2}) - \varphi(\mu_{n-1/2})}{\mu_{n+1/2} - \mu_{n-1/2}} - \varphi'(\mu_n) = \frac{(\hsnp)^2 - (\hsnm)^2}{2(\hsnm + \hsnp)} \varphi''(\mu_n)\\
+ \frac{(\hsnp)^3 \varphi'''(\xi_{n+}^*) + (\hsnm)^3 \varphi'''(\xi_{n-}^*)}{6(\hsnm + \hsnp)},
\end{multline}
or, taking account of
\begin{equation}
\frac{(\hsnp)^2 - (\hsnm)^2}{2(\hsnm + \hsnp)} = \frac{\hsnp - \hsnm}{2} = \dst{n},
\end{equation}
\begin{equation}
\label{Enstar}
E_n^* = -\dst{n} \varphi''(\mu_n) - \frac{(\hsnp)^3 \varphi'''(\xi_{n+}^*) + (\hsnm)^3 \varphi'''(\xi_{n-}^*)}{6(\hsnm + \hsnp)}.
\end{equation}

\par Now, since $\hsnm$ and $\hsnp$ are positive due to (\ref{interlacing}) and
\begin{multline}
\frac{(\hsnp)^3 + (\hsnm)^3}{6(\hsnm + \hsnp)} = \frac{(\hsnp)^2 - \hsnp \hsnm + (\hsnm)^2}{6}\\
= \frac{(\hst{n} + \dst{n})^2 - (\hst{n} + \dst{n}) (\hst{n} - \dst{n}) + (\hst{n} - \dst{n})^2}{6}\\
= \frac{(\hst{n})^2 + 3 (\dst{n})^2}{6} \leq \frac{(M_N^*/2)^2 + 3 (D_N^*)^2}{6} = \frac{(M_N^*)^2 + 12 (D_N^*)^2}{24},
\end{multline}
we get from Equation (\ref{Enstar})  the following inequality:
\begin{equation}
\label{eq-mother-formula-1}
\max_{1 \leq n \leq N} |E_n^*| \leq D_N^* \|\varphi''\|_\infty + \frac{(M_N^*)^2 + 12 (D_N^*)^2}{24} \|\varphi'''\|_\infty.
\end{equation}

\par The `if part' is a consequence of (\ref{eq-mother-formula-1}), (\ref{points-order-is-1}), and (\ref{mid-point-1-hyp}). Equation (\ref{eq-mother-formula-1}) also implies that the formula (\ref{seed-1}) is exact if $\varphi \in \PO_1$ or if [$D_N^* = 0$ and $\varphi \in \PO_2$].

\par The `only if part' can be proved in two steps:
\begin{itemize}
\item[Step 1] If the hypothesis (\ref{mid-point-1-hyp}) does not hold, that is, if $D_N^* \neq O(N^{-2})$, then $\max_{1 \leq n \leq N} |E_n^*| \neq O(N^{-2})$ for certain $\varphi \in {\rm C}^3([-1,1])$. Indeed, if one takes $\varphi(\mu) = \mu^2$, then  $E_n^* = - 2 \dst{n}$ by (\ref{Enstar}), and hence $\max_{1 \leq n \leq N} |E_n^*| = 2 D_N^* \neq O(N^{-2})$.

\item[Step 2] If the hypothesis (\ref{mid-point-1-hyp}) holds but the hypothesis (\ref{mesh-order-is-1}) does not hold, then $\max_{1 \leq n \leq N} |E_n^*| \neq O(N^{-2})$ for certain $\varphi \in {\rm C}^3([-1,1])$. To see this, let us take $\varphi(\mu) = \mu^3$. Then, $E_n^* = - (\hst{n})^2 - 3 (\dst{n})^2 - 6 \mu_n \dst{n}$ by (\ref{Enstar}). Now we will prove that $\max_{1 \leq n \leq N} |E_n^*| \neq O(N^{-2})$. Notice that $(\hst{n})^2 + 3 (\dst{n})^2 - 6 |\mu_n \dst{n}| \leq |E_n^*|$, and hence, for $n = 1,\dots,N$,
\begin{multline}
(\hst{n})^2 \leq |E_n^*| + 6 |\mu_n \dst{n}| - 3 (\dst{n})^2 \leq |E_n^*| + 6 |\mu_n \dst{n}|\\
\leq \max_{1 \leq n \leq N} |E_n^*| + 6 D_N^*,
\end{multline}
from where
\begin{equation}
(M_N^*)^2 \leq 4 \max_{1 \leq n \leq N} |E_n^*| + 24 D_N^*.
\end{equation}
So, $M_N^*$ would be $O(N^{-1})$, i.e., the hypothesis (\ref{mesh-order-is-1}) would be satisfied, if $\max_{1 \leq n \leq N} |E_n^*|$ were $O(N^{-2})$. This ends the proof of Step 2.
\end{itemize}

\par The examples above are also useful to demonstrate that the order $2$ cannot be improved:
\begin{itemize}
\item If $D_N^* \neq O(N^{-q})$ for all $q > 2$, then the example given by $\varphi(\mu) = \mu^2$ shows that $\max_{1 \leq n \leq N} |E_n^*| = 2 D_N^*$ is of the same order than $D_N^*$, so less than or equal to $2$.

\item If $D_N^* = O(N^{-q})$ for some $q > 2$, then the example given by $\varphi(\mu) = \mu^3$ shows that $\max_{1 \leq n \leq N} |E_n^*| \geq ((M_N^*)^2 - 24 D_N^*)/4 \geq ((2/N)^2 - 24 D_N^*)/4 = N^{-2} - 6 D_N^*$, and so $\max_{1 \leq n \leq N} |E_n^*|$ is again at most of order $2$. The inequality $M_N^* \geq 2/N$ follows from $2 = \sum_{n = 1}^N (\mu_{n + 1/2} - \mu_{n - 1/2}) \leq N M_N^*$.
\end{itemize}

\par This ends the proof of Lemma \ref{mother-formula-1}.
\end{proof}

\par The following result is analogous to Lemma \ref{mother-formula-1}, but contains a finer expression of the error term that will be needed later.

\begin{lemma}
\label{mother-formula-2}
Assume that the hypothesis (\ref{interlacing}) holds.
\begin{enumerate}
\item[(A)] The approximation
\begin{equation}
\label{seed-2}
\varphi'(\mu_{n+1/2}) \approx \frac{\varphi(\mu_{n+1}) - \varphi(\mu_n)}{\mu_{n+1} - \mu_n},\ n = 1,\dots,N-1,
\end{equation}
converges with order $2$ if, and only if, the hypotheses (\ref{mesh-order-is-1}) and (\ref{mid-point-2-hyp}) are met. More precisely, if $E_n(\varphi)$ is defined by
\begin{equation}
\label{EnDEF}
E_n(\varphi) = \varphi'(\mu_{n+1/2}) - \frac{\varphi(\mu_{n+1}) - \varphi(\mu_n)}{\mu_{n+1} - \mu_n},\ n = 1,\dots,N-1,
\end{equation}
then
\begin{equation}
\max_{1 \leq n \leq N-1} |E_n(\varphi)| = O(N^{-2})\ \mbox{for all}\ \varphi \in {\rm C}^3([-1,1])
\end{equation}
if, and only if, the hypotheses (\ref{mesh-order-is-1}) and (\ref{mid-point-2-hyp}) are met.

\par The maximal possible order is $2$.

\par Moreover, the formula (\ref{seed-2}) is exact if $\varphi \in \PO_1$ or if [$D_N = 0$ and $\varphi \in \PO_2$].

\item[(B)] If $\varphi \in {\rm C}^5([-1,1])$, then, for $n = 1,\dots,N-1$,
\begin{multline}
\label{Enclass5}
E_n(\varphi) = -d_n \varphi''(\mu_{n+1/2}) - \frac{h_n^2 + 3 d_n^2}{6} \varphi'''(\mu_{n+1/2})\\
 - \frac{h_n^2 d_n + d_n^3}{6} \varphi^{4)}(\mu_{n+1/2}) - \frac{\hnpe{5} \varphi^{5)}(\xinp) + \hnme{5} \varphi^{5)}(\xinm)}{120(\hnm + \hnp)}.
\end{multline}
\end{enumerate}
\end{lemma}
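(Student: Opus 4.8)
The plan is to follow the proof of Lemma \ref{mother-formula-1} almost verbatim, replacing the node $\mu_n$ by the point $\mu_{n+1/2}$ and the starred quantities $\dst{n}$, $\hsnm$, $\hsnp$, $M_N^*$, $D_N^*$ by their unstarred analogues $d_n$, $\hnm$, $\hnp$, $M_N$, $D_N$. For part (A) I would take $\varphi \in {\rm C}^3([-1,1])$ and Taylor\=/expand $\varphi(\mu_{n+1})$ and $\varphi(\mu_n)$ about $\mu_{n+1/2}$ to third order, using $\mu_{n+1} - \mu_{n+1/2} = \hnp$ and $\mu_{n+1/2} - \mu_n = \hnm$. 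Subtracting the two expansions and dividing by $\mu_{n+1} - \mu_n = \hnm + \hnp$, then simplifying with $\frac{\hnp^2 - \hnm^2}{2(\hnm + \hnp)} = \frac{\hnp - \hnm}{2} = d_n$, produces the exact analogue of (\ref{Enstar}),
\[
E_n(\varphi) = -d_n \varphi''(\mu_{n+1/2}) - \frac{\hnpe{3}\,\varphi'''(\xinp) + \hnme{3}\,\varphi'''(\xinm)}{6(\hnm + \hnp)} .
\]
The factorization $\frac{\hnpe{3} + \hnme{3}}{6(\hnm + \hnp)} = \frac{\hnp^2 - \hnp\hnm + \hnm^2}{6} = \frac{h_n^2 + 3 d_n^2}{6} \le \frac{M_N^2 + 12 D_N^2}{24}$ then yields the bound $\max_n |E_n(\varphi)| \le D_N \|\varphi''\|_\infty + \frac{M_N^2 + 12 D_N^2}{24}\|\varphi'''\|_\infty$.

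From this bound the ``if'' part is immediate using (\ref{nodes-order-is-1}) (which follows from (\ref{mesh-order-is-1})) and (\ref{mid-point-2-hyp}); the same bound also gives exactness on $\PO_1$ and, when $D_N = 0$, on $\PO_2$. For the ``only if'' part I would reuse the two monomials of Lemma \ref{mother-formula-1}: with $\varphi(\mu) = \mu^2$ the displayed formula collapses to $E_n = -2 d_n$, so $\max_n|E_n| = 2 D_N$ and order $2$ forces (\ref{mid-point-2-hyp}); with $\varphi(\mu) = \mu^3$ it gives $E_n = -6\,\mu_{n+1/2} d_n - (h_n^2 + 3 d_n^2)$, and the manipulation of Step 2 of Lemma \ref{mother-formula-1} (bounding $|\mu_{n+1/2}| \le 1$ via (\ref{interlacing})) leads to $M_N^2 \le 4\max_n|E_n| + 24 D_N$, so order $2$ forces $M_N = O(N^{-1})$, that is (\ref{nodes-order-is-1}), matching the mesh hypothesis (\ref{mesh-order-is-1}). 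The maximality of the order follows, as there, by inspecting these same examples.

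Part (B) is the one genuinely new computation. Here I would carry the Taylor expansions of $\varphi(\mu_{n+1})$ and $\varphi(\mu_n)$ up to order $5$, which is exactly where $\varphi \in {\rm C}^5([-1,1])$ is used, keeping Lagrange remainders in $\varphi^{5)}$ evaluated at the distinct intermediate points $\xinp$ and $\xinm$. After subtracting and dividing by $\hnm + \hnp$, the coefficients of $\varphi''$, $\varphi'''$ and $\varphi^{4)}$ are $\frac{\hnp^2 - \hnm^2}{2(\hnm+\hnp)}$, $\frac{\hnpe{3} + \hnme{3}}{6(\hnm+\hnp)}$ and $\frac{\hnp^4 - \hnm^4}{24(\hnm+\hnp)}$. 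Each is divisible by $\hnm + \hnp$ thanks to the parity facts recalled just before the lemma ($\hnp^k - \hnm^k$ for $k$ even, $\hnpe{3} + \hnme{3}$ for the odd case); substituting $\hnp = h_n + d_n$ and $\hnm = h_n - d_n$ collapses them to $d_n$, $\frac{h_n^2 + 3 d_n^2}{6}$ and $\frac{h_n^2 d_n + d_n^3}{6}$, which is precisely (\ref{Enclass5}). The fifth\=/order term cannot be merged the same way, since $\varphi^{5)}$ is sampled at two different arguments, so it remains in the two\=/point remainder form displayed.

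The only place demanding care is the algebra of part (B): one has to keep the two Lagrange remainders attached to their separate arguments while cleanly reducing the three lower\=/order coefficients, and in particular verify the least obvious identity $\frac{\hnp^4 - \hnm^4}{24(\hnm+\hnp)} = \frac{d_n(\hnp^2 + \hnm^2)}{12} = \frac{h_n^2 d_n + d_n^3}{6}$. Everything else is a transcription of the already\=/proved Lemma \ref{mother-formula-1} under the substitutions above, so I anticipate no further difficulty.
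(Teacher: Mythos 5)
Your part (B) is exactly the paper's proof: fifth\-/order Taylor expansions about $\mu_{n+1/2}$ with separate Lagrange remainders, subtraction, division by $\hnm + \hnp$, and the three divisibility identities; in particular your verification of $\frac{\hnpe{4} - \hnme{4}}{24(\hnm+\hnp)} = \frac{h_n^2 d_n + d_n^3}{6}$ agrees with the paper's Equation (\ref{class5-c}). For part (A) the paper itself only says that the proof is like that of Lemma~\ref{mother-formula-1}, and your transcription (the error representation with $d_n$ in place of $\dst{n}$, the bound $\max_n|E_n(\varphi)| \le D_N\|\varphi''\|_\infty + \frac{M_N^2 + 12 D_N^2}{24}\|\varphi'''\|_\infty$, and the test monomials $\mu^2$ and $\mu^3$) is the intended adaptation, and is carried out correctly.

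There is, however, one step where the transcription genuinely breaks down, and your write-up glosses over it: at the end of Step 2 you obtain $M_N = O(N^{-1})$, which is condition (\ref{nodes-order-is-1}), and then declare that this ``matches'' the hypothesis (\ref{mesh-order-is-1}), which is $\tiM_N = O(N^{-1})$. These are not equivalent: $\tiM_N$ also contains the boundary gaps $\mu_1 + 1$ and $1 - \mu_N$, and the errors $E_n(\varphi)$, $n = 1,\dots,N-1$, never sample anything near $\pm 1$, so they cannot control those gaps. Concretely, take $\mu_n = n/(N+1)$, interior points at the mid\-/points of the node cells, $\mu_{1/2} = -1$, $\mu_{N+1/2} = 1$: hypothesis (\ref{interlacing}) holds, $D_N = 0$ and $M_N = O(N^{-1})$, hence $\max_n|E_n(\varphi)| = O(N^{-2})$ for every $\varphi \in {\rm C}^3([-1,1])$, yet $\tiM_N > 1$. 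In Lemma~\ref{mother-formula-1} this issue does not arise because the quantity recovered there is $M_N^*$, and the inequality $\tiM_N \le 2 M_N^*$ of Lemma~\ref{lemma-mesh-points-order-is-1} bridges back to (\ref{mesh-order-is-1}); no such bridge exists from $M_N$, since nodes alone say nothing about how close $\mu_1$ and $\mu_N$ are to the end\-/points. So what your argument actually establishes is the equivalence of second\-/order convergence with [(\ref{nodes-order-is-1}) and (\ref{mid-point-2-hyp})]; the ``only if'' direction toward (\ref{mesh-order-is-1}) as literally stated cannot be closed by this route (the example above shows it is in fact false). To be fair, this defect is inherited from the paper's own one\-/line deferral of (A) to Lemma~\ref{mother-formula-1}; but since you are writing out the details, the last identification needs to be replaced, e.g.\ by stating the equivalence with (\ref{nodes-order-is-1}), after which everything else in your proposal is correct.
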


\begin{proof}
The proof of (A) is like that of Lemma \ref{mother-formula-1}. Let us prove (B).

\par Subtracting the Taylor expansions
\begin{multline}
\label{Tay-1-b} \varphi(\mu_{n+1}) = \varphi(\mu_{n + 1/2}) + \hnp \varphi'(\mu_{n + 1/2}) + \frac{\hnpe{2}}{2} \varphi''(\mu_{n + 1/2})\\
+ \frac{\hnpe{3}}{6} \varphi'''(\mu_{n + 1/2}) + \frac{\hnpe{4}}{24} \varphi^{4)}(\mu_{n + 1/2}) + \frac{\hnpe{5}}{120} \varphi^{5)}(\xinp),
\end{multline}
\begin{multline}
\label{Tay-2-b} \varphi(\mu_n) = \varphi(\mu_{n + 1/2}) - \hnm \varphi'(\mu_{n + 1/2}) + \frac{\hnme{2}}{2} \varphi''(\mu_{n + 1/2})\\
- \frac{\hnme{3}}{6} \varphi'''(\mu_{n + 1/2}) + \frac{\hnme{4}}{24} \varphi^{4)}(\mu_{n + 1/2}) - \frac{\hnme{5}}{120} \varphi^{5)}(\xinm),
\end{multline}
and then dividing the result by $\mu_{n+1} - \mu_n = \hnm + \hnp$, one gets
\begin{multline}
\label{quasi-Enclass5} -E_n = \frac{\varphi(\mu_{n+1}) - \varphi(\mu_n)}{\mu_{n+1} - \mu_n} - \varphi'(\mu_{n + 1/2})\\
= \frac{\hnpe{2} - \hnme{2}}{2(\hnm + \hnp)} \varphi''(\mu_{n + 1/2}) + \frac{\hnpe{3} + \hnme{3}}{6(\hnm + \hnp)} \varphi'''(\mu_{n + 1/2})\\
+ \frac{\hnpe{4} - \hnme{4}}{24(\hnm + \hnp)} \varphi^{4)}(\mu_{n + 1/2}) + \frac{\hnpe{5} \varphi^{5)}(\xinp) + \hnme{5} \varphi^{5)}(\xinm)}{120(\hnm + \hnp)}.
\end{multline}

\par Finally, the error representation (\ref{Enclass5}) results from (\ref{quasi-Enclass5}) and the following equalities:
\begin{align}
\label{class5-a} \frac{\hnpe{2} - \hnme{2}}{2(\hnm + \hnp)} & = \frac{\hnp - \hnm}{2} = d_n,\\
\label{class5-b} \frac{\hnpe{3} + \hnme{3}}{6(\hnm + \hnp)} & = \frac{1}{6}(\hnpe{2} - \hnp \hnm + \hnme{2}) = \frac{h_n^2 + 3 d_n^2}{6},\\
\label{class5-c} \frac{\hnpe{4} - \hnme{4}}{24(\hnm + \hnp)} & = \frac{(\hnpe{2} + \hnme{2}) (\hnp - \hnm)}{24} = \frac{h_n^2 d_n + d_n^3}{6},
\end{align}
where we have used the identities $\hnm = h_n - d_n$ and $\hnp = h_n + d_n$.
\end{proof}

\section{Some general comments}\label{SECTION-general-comments}
We will describe in the following sections difference schemes for approximating the FP angular diffusion operator $\Delta_{\rm FP} f$ defined by Equation (\ref{AICSO}). In what follows, $\Delta_{{\rm FP},N} f(\mu_n)$ will stand for an approximation of $\Delta_{\rm FP} f(\mu_n)$ obtained on a mesh of $N$ nodes.

\begin{defi}
For each $n = 1,\dots,N$, we define the truncation error $R_n(f)$ as
\begin{equation}
R_n(f) = \Delta_{\rm FP} f(\mu_n) - \Delta_{{\rm FP},N} f(\mu_n).
\end{equation}
\end{defi}

\begin{defi}
A numerical scheme for computing $\Delta_{{\rm FP},N} f(\mu_n)$
\begin{enumerate}
\item Converges for the function $f$ if
\begin{equation}
\lim_{N \to \infty} \max_{1 \leq n \leq N} |R_n(f)| = 0.
\end{equation}

\item Converges with (at least) order $p$ for the function $f$ if
\begin{equation}
\max_{1 \leq n \leq N} |R_n(f)| = O(N^{-p})
\end{equation}
for certain positive real number $p$.

\item Converges with order $p$ if converges with order $p$ for all $f$ regular enough, which in this paper will mean that there exists $k \in \NA$ such that converges with order $p$ for all $f \in {\rm C}^k([-1,1])$.
\end{enumerate}
\end{defi}

\par As was anticipated in the introduction, a particular case of DOM schemes will have a special relevance in this paper: the GL schemes, the definition of which is formalized as follows.

\begin{defi}
Any scheme that takes as $\{\mu_n\}_{n=1}^N$ the set of GL nodes will be called a GL scheme.
\end{defi}

\par The FPE is frequently solved with a GL scheme, in which, sometimes, the set $\{\mu_{n+1/2}\}_{n = 0}^N$ is constructed from the GL weights. Whenever a GL scheme is used, it is usually considered that there are two possible modes of application (see for instance \cite{GALP23}):
\begin{enumerate}
\item Full range (FR) mode: nodes and weights are those of the GL formula of $N$ points in $(-1,1)$. Automatically, this refines the mesh in the vicinity of $-1$ and $1$. The FPE degenerates at $\mu = 0$, and so the node $0$ is typically avoided by taking $N$ even, but the parity of $N$ is not at all relevant when studying the convergence of the schemes that discretize the angular diffusion operator in isolation. We think that the ideas contained in this paper can be used to design a DOM scheme for the FPE which can use $N$ odd while maintaining good properties as order of convergence and discrete moments preservation, but this will be part of future research.

\item Half range (HR) mode: nodes and weights are those of the GL formula of $N$ points in $(-1,0)$ and those of the GL formula of $N$ points in $(0,1)$. In this way, one has a total amount of $2N$ nodes. Automatically, this avoids the node $0$ and refines the mesh in the vicinity of $-1$, $0$, and $1$. It is clear that an equivalent explanation can be given with $N$ nodes as long as $N$ is even, but we will always consider $2N$ nodes when operating GL schemes in HR mode.
\end{enumerate}

\subsection{The zeroth and first moment properties}
Associated with the FP Laplacian, there are two properties of interest, namely the zeroth and the first moment properties:
\begin{align}
\label{zeroth-moment} & \int_{-1}^1 \Delta_{\rm FP} f(\mu)\ d\mu = 0,\\
\label{first-moment} & \int_{-1}^1 \mu \Delta_{\rm FP} f(\mu)\ d\mu = -2 \int_{-1}^1 \mu f(\mu)\ d\mu,
\end{align}
both of which are easy to verify. The reader can think about how these properties should be written for diffusivities other than $\dd(\mu) = 1 - \mu^2$.

\par According to \cite{MO85}, it is of interest that the schemes satisfy discrete versions of these two properties.

\begin{defi}\label{DEF-discrete-moment}
We say that a GL scheme
\begin{itemize}
\item Satisfies the discrete zeroth moment property (or preserves the zeroth moment) if
\begin{equation}
\sum_{n = 1}^N w_n \Delta_{{\rm FP},N} f(\mu_n) = 0.
\end{equation}

\item Satisfies the discrete first moment property (or preserves the first moment) if
\begin{equation}
\sum_{n = 1}^N w_n \mu_n \Delta_{{\rm FP},N} f(\mu_n) = -2 \sum_{n = 1}^N w_n \mu_n f(\mu_n),
\end{equation}
\end{itemize}
where $\{w_n\}_{n=1}^N$ are the GL weights.
\end{defi}

\par Obviously, Definition \ref{DEF-discrete-moment} relies on GL quadrature, which is natural for GL schemes, but, when dealing with a non\=/GL scheme, an analogous definition can be written based on some other appropriate quadrature rule.

\section{Schemes of type I}\label{SECTION-type-I}
After (\ref{i-classical-deduction}), and noticing that $\dd(\mu_{1/2}) = \dd(\mu_{N+1/2}) = 0$, let us consider the following scheme:

\begin{equation}
\label{Type-I-scheme-a}
\Delta_{{\rm FP},N} f(\mu_1) = \frac{\dd(\mu_{1+1/2}) \frac{f(\mu_2) - f(\mu_1)}{\mu_2 - \mu_1}}{\mu_{1+1/2} + 1},
\end{equation}

\begin{eqnarray}
\nonumber & & \Delta_{{\rm FP},N} f(\mu_n) = \frac{\dd(\mu_{n+1/2}) \frac{f(\mu_{n+1}) - f(\mu_n)}{\mu_{n+1} - \mu_n} - \dd(\mu_{n-1/2}) \frac{f(\mu_n) - f(\mu_{n-1})}{\mu_n - \mu_{n-1}}}{\mu_{n+1/2} - \mu_{n-1/2}}\\
\label{Type-I-scheme-b} & & \mbox{for}\ n = 2,\dots,N-1,
\end{eqnarray}

\begin{equation}
\label{Type-I-scheme-c}
\Delta_{{\rm FP},N} f(\mu_N) = \frac{- \dd(\mu_{N-1/2}) \frac{f(\mu_N) - f(\mu_{N-1})}{\mu_N - \mu_{N-1}}}{1 - \mu_{N-1/2}}.
\end{equation}

\par The scheme (\ref{Type-I-scheme-a})--(\ref{Type-I-scheme-c}) can be written simply as
\begin{eqnarray}
\nonumber & & \Delta_{{\rm FP},N} f(\mu_n) = \frac{\dd(\mu_{n+1/2}) \frac{f(\mu_{n+1}) - f(\mu_n)}{\mu_{n+1} - \mu_n} - \dd(\mu_{n-1/2}) \frac{f(\mu_n) - f(\mu_{n-1})}{\mu_n - \mu_{n-1}}}{\mu_{n+1/2} - \mu_{n-1/2}}\\
\label{Type-I-scheme} & & \mbox{for}\ n = 1,\dots,N,
\end{eqnarray}
understanding that the terms containing the undefined nodes $\mu_0$ and $\mu_{N+1}$ must be ignored as they are multiplied by zero.

\par This is really a family of schemes depending upon the choice of the nodes $\mu_n$ and the points $\mu_{n+1/2}$. We shall refer to the members of this family as schemes of type I.

\subsection{First example: Lee's scheme}\label{subsec-Lee}
If $\{\mu_n\}_{n=1}^N$ are the GL nodes, and the points $\{\mu_{n+1/2}\}_{n = 0}^N$ are defined by $\mu_{1/2} = -1$, $\mu_{n+1/2} = (\mu_n + \mu_{n+1})/2$ for $n = 1,\dots,N-1$, $\mu_{N+1/2} = 1$, one recovers the scheme used, in chronological order, by Lee in \cite{LE62}, Antal and Lee in \cite{ANLE76} and Mehlhorn and Duderstadt in \cite {MEDU80}. According to the literature, this scheme was the standard in the nuclear engineering community from the sixties of the past century until the appearance of the Haldy\=/Ligou's scheme, which in turn was soon substituted by the Morel's scheme, to be described later.

\subsection{Second example: Haldy\=/Ligou's scheme}\label{subsec-HL}
If $\{\mu_n\}_{n=1}^N$ are the GL nodes, and the points $\{\mu_{n+1/2}\}_{n = 0}^N$ are defined by $\mu_{1/2} = -1$, $\mu_{n+1/2} = \mu_{n - 1/2} + w_n$ for $n = 1,\dots,N$, being $\{w_n\}_{n=1}^N$ the GL weights, one recovers the scheme used by Haldy and Ligou in \cite{HALI80}.

\par Since
\begin{equation}
\sum_{n=1}^N w_n = 2,
\end{equation}
points $\mu_{n+1/2}$ are antisymmetric with respect to $0$:
\begin{equation}\label{HLpoints-antisym}
\mu_{n+1/2} = -\mu_{N-n+1/2}\ \mbox{for}\ n = 0,\dots,N.
\end{equation}

\par  In particular, one always obtains $\mu_{N+1/2} = 1$ and, if $N$ is even, $\mu_{(N/2)+1/2} = \hmu_{(N/2)+1/2} = 0$.

\par When programming this scheme, it is convenient to take advantage of Equation (\ref{HLpoints-antisym}) by calculating only those points $\mu_{n+1/2}$ that belong to $[-1,0]$, and then determining the ones in $(0,1]$ by means of the antisymmetry. In this way, roundoff errors are reduced.

\par So, in FR mode, this scheme reads as follows:
\begin{eqnarray}
\nonumber & & \Delta_{{\rm FP},N} f(\mu_n) = \frac{\dd(\mu_{n+1/2}) \frac{f(\mu_{n+1}) - f(\mu_n)}{\mu_{n+1} - \mu_n} - \dd(\mu_{n-1/2}) \frac{f(\mu_n) - f(\mu_{n-1})}{\mu_n - \mu_{n-1}}}{w_n}\\
\label{Haldy-Ligou-scheme} & & \mbox{for}\ n = 1,\dots,N.
\end{eqnarray}

\par It can be seen as an evolution of Lee's scheme designed so that the discrete zeroth moment property is satisfied while remaining a GL scheme.

\par When used in HR mode, Haldy\=/Ligou's scheme still satisfies the discrete zeroth moment property, but this is unimportant, since it is no longer convergent.

\subsection{Analysis of convergence}
Results in this subsection hold for generic diffusivities and not only for $\dd(\mu) = 1 - \mu^2$.

\par We start with a result on the error representation.
\begin{prop}[error representation for schemes of type I]
\label{ER-type-I} Let $\dd$ be a function of class ${\rm C}^1([-1,1])$ such that $\dd(-1) = \dd(1) = 0$. Suppose that $f \in {\rm C}^2([-1,1])$ and that $\Delta_{{\rm FP},N} f(\mu_n)$ is defined by Equation (\ref{Type-I-scheme}). Then, for $n = 1,\dots,N$,
\begin{equation}\label{DEFRn}
\Delta_{\rm FP} f(\mu_n) = \Delta_{{\rm FP},N} f(\mu_n) + R_n(f),
\end{equation}
with
\begin{equation}
\label{RnDEF}
R_n(f) = \varepsilon_n(f) + E_n^\star(\dd f'),
\end{equation}
being
\begin{align}
\label{eps1DEF} \varepsilon_1(f) & = \frac{\dd(\mu_{1+1/2})E_1(f)}{\mu_{1+1/2} + 1},\\
\label{epsnDEF} \varepsilon_n(f) & = \frac{\dd(\mu_{n+1/2})E_n(f) - \dd(\mu_{n-1/2})E_{n-1}(f)}{\mu_{n+1/2} - \mu_{n-1/2}}\ \mbox{for}\ n = 2,\dots,N-1,\\
\label{epsNDEF} \varepsilon_N(f) & = -\frac{\dd(\mu_{N-1/2})E_{N-1}(f)}{1 - \mu_{N-1/2}}.
\end{align}
\par In the expressions above, $E_n^\star(\dd f')$ and $E_n(f)$ are those defined by Equations (\ref{EnstarDEF}) and (\ref{EnDEF}), respectively.
\end{prop}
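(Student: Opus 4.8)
The plan is to treat the proposition as a purely algebraic (\emph{exact}) decomposition, using only the \emph{definitions} of $E_n^\star$ and $E_n$ in Equations (\ref{EnstarDEF}) and (\ref{EnDEF}); no Taylor expansion or order estimate from Lemmas \ref{mother-formula-1} and \ref{mother-formula-2} is needed. First I would set $\psi = \dd f'$, so that $\Delta_{\rm FP} f(\mu_n) = (\dd f')'(\mu_n) = \psi'(\mu_n)$. Because $\dd \in {\rm C}^1([-1,1])$ and $f \in {\rm C}^2([-1,1])$, the product $\psi$ belongs to ${\rm C}^1([-1,1])$, so $\psi'(\mu_n)$ exists and both $E_n^\star(\dd f')$ and $E_n(f)$ are well defined (the latter requiring only $f \in {\rm C}^1$).

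The core step is a two\=/level substitution. Applying the definition (\ref{EnstarDEF}) to $\varphi = \psi = \dd f'$ gives the exact identity
\[
\Delta_{\rm FP} f(\mu_n) = \frac{\dd(\mu_{n+1/2}) f'(\mu_{n+1/2}) - \dd(\mu_{n-1/2}) f'(\mu_{n-1/2})}{\mu_{n+1/2} - \mu_{n-1/2}} + E_n^\star(\dd f'),
\]
in which the half\=/point factors $\dd(\mu_{n\pm 1/2})$ still multiply the \emph{exact} derivatives $f'(\mu_{n\pm 1/2})$. Then I would use the definition (\ref{EnDEF}) in the form $f'(\mu_{n+1/2}) = \frac{f(\mu_{n+1}) - f(\mu_n)}{\mu_{n+1} - \mu_n} + E_n(f)$, together with its shifted instance $f'(\mu_{n-1/2}) = \frac{f(\mu_n) - f(\mu_{n-1})}{\mu_n - \mu_{n-1}} + E_{n-1}(f)$, to replace each exact derivative by the difference quotient actually used in the scheme. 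Substituting and splitting the fraction into the part built from difference quotients and the part built from $E_n(f)$ and $E_{n-1}(f)$, the first part is exactly $\Delta_{{\rm FP},N} f(\mu_n)$ of (\ref{Type-I-scheme}) and the second is exactly $\varepsilon_n(f)$ of (\ref{epsnDEF}), yielding $R_n(f) = \varepsilon_n(f) + E_n^\star(\dd f')$ for the interior indices $n = 2,\dots,N-1$.

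Finally I would dispatch the two boundary indices. For $n = 1$ the term carrying $\dd(\mu_{1/2}) = \dd(-1) = 0$ drops out, and since $\mu_{1/2} = -1$ the denominator $\mu_{1+1/2} - \mu_{1/2}$ becomes $\mu_{1+1/2} + 1$; the same manipulation then reproduces (\ref{eps1DEF}). Symmetrically, for $n = N$ the term carrying $\dd(\mu_{N+1/2}) = \dd(1) = 0$ vanishes, and $\mu_{N+1/2} = 1$ turns the denominator into $1 - \mu_{N-1/2}$, giving (\ref{epsNDEF}) with its minus sign.

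I do not anticipate a genuine obstacle: the result is an exact identity and the argument is essentially bookkeeping. The only points demanding care are (i) confirming that the regularity $f \in {\rm C}^2$ is precisely what makes $\psi'(\mu_n)$ meaningful, while the error terms use only the definitions and not the higher\=/order Taylor content of the earlier lemmas; and (ii) handling the endpoint conventions, namely verifying that the \emph{missing} neighbours $\mu_0$ and $\mu_{N+1}$ never enter because their associated $\dd$\=/factors vanish at $\pm 1$, and that $\mu_{n+1/2} - \mu_{n-1/2}$ collapses correctly to $\mu_{1+1/2} + 1$ and $1 - \mu_{N-1/2}$ at the two ends.
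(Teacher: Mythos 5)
Your proposal is correct and follows essentially the same route as the paper: an exact two\-/level substitution, first applying the definition of $E_n^\star$ to $\varphi = \dd f'$, then replacing the exact derivatives $f'(\mu_{n\pm 1/2})$ via the definition of $E_n$, with the endpoint terms dropping out because $\dd(\mu_{1/2}) = \dd(\mu_{N+1/2}) = 0$. The only cosmetic difference is that the paper writes out the case $n = 1$ in full and declares the remaining cases analogous, whereas you detail the interior indices and sketch the two boundary ones.
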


\begin{proof}
$\Delta_{\rm FP} f$ is well defined in the classical sense because $\dd \in {\rm C}^1([-1,1])$ and $f \in {\rm C}^2([-1,1])$.

\par Using (\ref{EnstarDEF}), $\dd(\mu_{1/2}) = 0$, and (\ref{EnDEF}),
\begin{multline}
\Delta_{\rm FP} f(\mu_1) = \frac{\dd(\mu_{1+1/2}) f'(\mu_{1+1/2}) - \dd(\mu_{1/2}) f'(\mu_{1/2})}{\mu_{1+1/2} + 1} + E_1^\star(\dd f')\\
= \frac{\dd(\mu_{1+1/2}) f'(\mu_{1+1/2})}{\mu_{1+1/2} + 1} + E_1^\star(\dd f')\\
= \frac{\dd(\mu_{1+1/2})}{\mu_{1+1/2} + 1} \left\{\frac{f(\mu_2) - f(\mu_1)}{\mu_2 - \mu_1} + E_1(f)\right\} + E_1^\star(\dd f')\\
= \Delta_{{\rm FP},N} f(\mu_1) + \varepsilon_1(f) + E_1^\star(\dd f'),
\end{multline}
with $\varepsilon_1(f)$ given by (\ref{eps1DEF}). The missing proofs can be done analogously.
\end{proof}

\par Our goal is to fix certain conditions on the set of nodes and points so that the scheme converges with order $2$. Thanks to Equation (\ref{RnDEF}) and Lemma \ref{mother-formula-1}, the point is to establish conditions for $\max_{1 \leq n \leq N} |\varepsilon_n(f)|$ to be a  $O(N^{-2})$ when $f$ is regular enough.

\par As anticipated by (\ref{eps1DEF}) and (\ref{epsNDEF}), the determination of bounds for $|\varepsilon_1(f)|$ and $|\varepsilon_N(f)|$ is special because $\dd(\mu_{1/2}) = \dd(\mu_{N+1/2}) = 0$. It turns out to be a very easy task.

\begin{prop}[bound for $\max \{|\varepsilon_1(f)|, |\varepsilon_N(f)|\}$]
\label{BOUND-for-eps1-and-epsN}
Suppose that $\dd$ is a function of class ${\rm C}^1([-1,1])$ such that $\dd(-1) = \dd(1) = 0$. Let $f$ be a function of class ${\rm C}^2([-1,1])$ and let $\varepsilon_1(f)$, $\varepsilon_N(f)$ be the quantities defined by Equations (\ref{eps1DEF}) and (\ref{epsNDEF}), respectively. Then,
\begin{equation}
\max \{|\varepsilon_1(f)|, |\varepsilon_N(f)|\} \leq \|\dd'\|_\infty \left( \max_{1 \leq n \leq N-1} |E_n(f)| \right).
\end{equation}
\end{prop}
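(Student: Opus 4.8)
The plan is to prove this bound directly from the definitions \eqref{eps1DEF} and \eqref{epsNDEF}, exploiting the fact that $\dd$ vanishes at the endpoints $\mu_{1/2} = -1$ and $\mu_{N+1/2} = 1$. The key observation is that the factors $\dd(\mu_{1+1/2})/(\mu_{1+1/2}+1)$ and $\dd(\mu_{N-1/2})/(1-\mu_{N-1/2})$ appearing in $\varepsilon_1(f)$ and $\varepsilon_N(f)$ are, thanks to $\dd(-1) = \dd(1) = 0$, nothing but difference quotients of $\dd$ over the intervals $[-1,\mu_{1+1/2}]$ and $[\mu_{N-1/2},1]$ respectively.

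First I would treat $\varepsilon_1(f)$. Since $\dd(\mu_{1/2}) = \dd(-1) = 0$, I write
\begin{equation}
\frac{\dd(\mu_{1+1/2})}{\mu_{1+1/2}+1} = \frac{\dd(\mu_{1+1/2}) - \dd(\mu_{1/2})}{\mu_{1+1/2} - \mu_{1/2}}.
\end{equation}
By the mean value theorem applied to $\dd \in {\rm C}^1([-1,1])$, this equals $\dd'(\zeta)$ for some intermediate point $\zeta \in (-1,\mu_{1+1/2})$, whence its absolute value is bounded by $\|\dd'\|_\infty$. Taking absolute values in \eqref{eps1DEF} then gives
\begin{equation}
|\varepsilon_1(f)| \leq \|\dd'\|_\infty\, |E_1(f)| \leq \|\dd'\|_\infty \left( \max_{1 \leq n \leq N-1} |E_n(f)| \right).
\end{equation}

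Next I would handle $\varepsilon_N(f)$ by the entirely symmetric argument: using $\dd(\mu_{N+1/2}) = \dd(1) = 0$, I rewrite the coefficient $\dd(\mu_{N-1/2})/(1-\mu_{N-1/2})$ as the difference quotient $(\dd(\mu_{N+1/2}) - \dd(\mu_{N-1/2}))/(\mu_{N+1/2} - \mu_{N-1/2})$, apply the mean value theorem to bound it by $\|\dd'\|_\infty$, and conclude from \eqref{epsNDEF} that $|\varepsilon_N(f)| \leq \|\dd'\|_\infty\,|E_{N-1}(f)|$, which is again at most $\|\dd'\|_\infty \max_{1 \leq n \leq N-1} |E_n(f)|$. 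Taking the maximum of the two bounds yields the claimed inequality. I anticipate no real obstacle here: the statement is explicitly flagged as ``a very easy task'' in the text, and the only slightly delicate point is recognizing that the vanishing of $\dd$ at the endpoints turns the singular-looking coefficients into bona fide difference quotients, after which the mean value theorem does all the work and the positivity of the denominators (guaranteed by the interlacing \eqref{interlacing}) ensures everything is well defined.
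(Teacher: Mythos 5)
Your proposal is correct and is essentially the paper's own argument: the paper also uses $\dd(\mu_{1/2}) = \dd(\mu_{N+1/2}) = 0$ together with Taylor's theorem at first order (i.e., the mean value theorem) to identify the coefficients $\dd(\mu_{1+1/2})/(\mu_{1+1/2}+1)$ and $\dd(\mu_{N-1/2})/(1-\mu_{N-1/2})$ with values of $\dd'$ at intermediate points, then bounds them by $\|\dd'\|_\infty$. The only cosmetic blemish is a sign: for the $n=N$ case the difference quotient $(\dd(\mu_{N+1/2})-\dd(\mu_{N-1/2}))/(\mu_{N+1/2}-\mu_{N-1/2})$ equals $-\dd(\mu_{N-1/2})/(1-\mu_{N-1/2})$ rather than the coefficient itself, but this is irrelevant once absolute values are taken.
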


\begin{proof}
Notice that $\mu_{1+1/2} + 1 = 2 h_1^*$. Then, Equation (\ref{eps1DEF}) and the equality
\begin{equation}
\dd(\mu_{1+1/2}) = \dd(\mu_{1/2}) + 2 h_1^* \dd'(c_1) = 2 h_1^* \dd'(c_1),
\end{equation}
obtained by means of Taylor's theorem, imply
\begin{equation}
|\varepsilon_1(f)| \leq \|\dd'\|_\infty \left( \max_{1 \leq n \leq N-1} |E_n(f)| \right).
\end{equation}

\par Proceeding in a similar way, one sees that the same upper bound is valid for $|\varepsilon_N(f)|$.
\end{proof}

\par Obtaining an appropriate bound for $\max_{2 \leq n \leq N-1} |\varepsilon_n(f)|$ is much more difficult. We need to introduce some new definitions and, as will be seen in the proof of Proposition \ref{BOUND-for-epsn} below, use the second part of Lemma \ref{mother-formula-2} and break the problem into several simpler ones.

\begin{defi}\label{ANDEF}
For $n = 1,\dots,N-1$, $a_n = d_n + d_n^*$, and
\begin{equation}
A_N = \max_{1 \leq n \leq N-1} |a_n|.
\end{equation}
\end{defi}

\begin{defi}\label{BNDEF}
For $n = 2,\dots,N$, $b_n = d_{n-1} + d_n^*$, and
\begin{equation}
B_N = \max_{2 \leq n \leq N} |b_n|.
\end{equation}
\end{defi}

\begin{defi}\label{CNDEF}
$C_N = D_N + D_N^*$.
\end{defi}

\par Notice that $C_N$ can be used to bound both $A_N$ and $B_N$.

\begin{defi}\label{betaNDEF}
$\beta_N(\dd)$ is the number defined by
\begin{equation}
\beta_N(\dd) = \max_{2 \leq n \leq N-1} \left|\frac{(d_{n-1} - d_n) \dd(\mu_{n+1/2})}{\mu_{n + 1/2} - \mu_{n - 1/2}}\right|
\end{equation}
or, equivalently,
\begin{equation}
\beta_N(\dd) = \max_{2 \leq n \leq N-1} \left|\frac{(d_{n-1} - d_n) \dd(\mu_{n+1/2})}{2 h_n^\star}\right|.
\end{equation}
\end{defi}

\begin{prop}[bound for $\max_{2 \leq n \leq N-1} |\varepsilon_n(f)|$]
\label{BOUND-for-epsn}
Suppose that $\dd$ is a function of class ${\rm C}^1([-1,1])$. Let us understand that $\beta_N = \beta_N(\dd)$ and let $f$ be a function of class ${\rm C}^5([-1,1])$. Fix $n \in \{2,\dots,N-1\}$ and let $\varepsilon_n(f)$ be the quantity defined by Equation (\ref{epsnDEF}).
Then,
\begin{equation}
\label{epsnclass5}
\varepsilon_n(f) = \varepsilon_n^{(1)}(f) + \varepsilon_n^{(2)}(f) + \varepsilon_n^{(3)}(f) + \varepsilon_n^{(4)}(f),
\end{equation}
with
\begin{align}
\label{boundeps1n} |\varepsilon_n^{(1)}(f)| & \leq \beta_N \|f''\|_\infty + D_N \|(\dd f'')'\|_\infty,\\
\nonumber |\varepsilon_n^{(2)}(f)| & \leq \frac{\beta_N (3 D_N + C_N)}{3} \|f'''\|_\infty + \frac{C_N}{3} \|\dd f'''\|_\infty\\
\label{boundeps2n} & \hspace{30mm} + \frac{M_N^2 + 12 D_N^2}{24} \|(\dd f''')'\|_\infty,\\
\nonumber |\varepsilon_n^{(3)}(f)| & \leq \frac{\beta_N \{(M_N^*)^2 + 8 C_N D_N + 4 C_N^2 + 12 D_N^2\}}{24} \|f^{4)}\|_\infty\\
\label{boundeps3n} & \hspace{5mm} + \frac{C_N D_N}{3} \|\dd f^{4)}\|_\infty + \frac{M_N^2 D_N + 4 D_N^3}{24} \|(\dd f^{4)})'\|_\infty,\\
\label{boundeps4n} |\varepsilon_n^{(4)}(f)| & \leq \frac{Z_N}{960} \|\dd\|_\infty \|f^{5)}\|_\infty,
\end{align}
where
\begin{multline}\label{DEFZN}
Z_N = (M_N^*)^3 + 8 (M_N^*)^2 C_N + 24 M_N^* C_N^2 + 40 M_N^* D_N^2\\
+ 32 C_N^3 + 160 C_N D_N^2 + \frac{16}{m_N^\star} (C_N^4 + 10 C_N^2 D_N^2 + 5 D_N^4).
\end{multline}
\end{prop}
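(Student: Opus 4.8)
The plan is to insert the refined error representation (\ref{Enclass5}) of Lemma \ref{mother-formula-2} for both $E_n(f)$ and $E_{n-1}(f)$ into the definition (\ref{epsnDEF}) of $\varepsilon_n(f)$, and then to group the outcome according to the derivative of $f$ that each piece carries. For $j \in \{2,3,4\}$, writing $c_n^{(j)}$ for the coefficient of $f^{(j)}(\mu_{n+1/2})$ in (\ref{Enclass5}) (so that $c_n^{(2)} = -d_n$, $c_n^{(3)} = -(h_n^2 + 3 d_n^2)/6$ and $c_n^{(4)} = -(h_n^2 d_n + d_n^3)/6$), the part of $\varepsilon_n(f)$ carrying the derivative of order $j$ has the form
\begin{equation}
\varepsilon_n^{(j-1)}(f) = \frac{\dd(\mu_{n+1/2})\, c_n^{(j)}\, f^{(j)}(\mu_{n+1/2}) - \dd(\mu_{n-1/2})\, c_{n-1}^{(j)}\, f^{(j)}(\mu_{n-1/2})}{\mu_{n+1/2} - \mu_{n-1/2}},
\end{equation}
while $\varepsilon_n^{(4)}(f)$ collects the two contributions involving $f^{5)}$. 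This is the splitting (\ref{epsnclass5}), and the task reduces to bounding the four pieces.

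For $\varepsilon_n^{(1)}$, $\varepsilon_n^{(2)}$ and $\varepsilon_n^{(3)}$ I would use a single device. Abbreviating $G = \dd f^{(j)}$ and $c_n = c_n^{(j)}$, and recalling that $\mu_{n+1/2} - \mu_{n-1/2} = 2\hst{n}$, I rewrite the numerator as $(c_n - c_{n-1})\,G(\mu_{n+1/2}) + c_{n-1}\bigl(G(\mu_{n+1/2}) - G(\mu_{n-1/2})\bigr)$. Since $f \in {\rm C}^5([-1,1])$ and $\dd \in {\rm C}^1([-1,1])$, the product $G = \dd f^{(j)}$ is of class ${\rm C}^1$ for $j \in \{2,3,4\}$, so the mean value theorem gives $G(\mu_{n+1/2}) - G(\mu_{n-1/2}) = 2\hst{n}\, G'(\zeta_n)$ for some intermediate $\zeta_n$. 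Dividing by $2\hst{n}$, this half of the split contributes $c_{n-1}\,G'(\zeta_n)$, bounded by $(\max_n |c_{n-1}|)\,\|(\dd f^{(j)})'\|_\infty$; estimating $|c_{n-1}|$ through $h_{n-1} \leq M_N/2$ and $|d_{n-1}| \leq D_N$ furnishes exactly the last summand of each of (\ref{boundeps1n})--(\ref{boundeps3n}).

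The other half, $\dfrac{c_n - c_{n-1}}{2\hst{n}}\,G(\mu_{n+1/2})$, is where $\beta_N$ and $C_N$ appear. Here I would feed in the geometric identities $h_n = \hst{n} + a_n$, $h_{n-1} = \hst{n} - b_n$ and $d_{n-1} - d_n = b_n - a_n$, all immediate from (\ref{ds-and-hs}), (\ref{ds-and-hs-BIS}) and Definitions \ref{ANDEF}, \ref{BNDEF}. Collecting terms, $c_n - c_{n-1}$ reorganizes as $2\hst{n}\,P_n + (d_{n-1} - d_n)\,Q_n$, where $P_n$ and $Q_n$ are polynomials in $a_n, b_n, d_{n-1}, d_n$ with no denominator. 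Division by $2\hst{n}$ then leaves a bounded coefficient $P_n$, whose contribution $P_n\,G(\mu_{n+1/2})$ is controlled by $\|\dd f^{(j)}\|_\infty$ (this produces the summands with $\frac{C_N}{3}$ and $\frac{C_N D_N}{3}$ once $|a_n|,|b_n| \leq C_N$ and $|d_n| \leq D_N$), together with the term $\dfrac{(d_{n-1} - d_n)\,\dd(\mu_{n+1/2})}{2\hst{n}}\,Q_n\, f^{(j)}(\mu_{n+1/2})$, whose first factor is at most $\beta_N$ by Definition \ref{betaNDEF} and whose $Q_n$ is bounded by the same inequalities, yielding the $\beta_N$ summands. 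For $j = 2$ one simply has $c_n^{(2)} - c_{n-1}^{(2)} = d_{n-1} - d_n$, so $P_n \equiv 0$ and no $\|\dd f''\|_\infty$ term survives, in agreement with (\ref{boundeps1n}).

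The piece $\varepsilon_n^{(4)}$ is the real difficulty and needs a different argument. Because $f$ is only of class ${\rm C}^5$, $G = \dd f^{5)}$ is merely continuous and the mean value device is unavailable; moreover the intermediate points $\xinp, \xinm$ attached to the cell $[\mu_n, \mu_{n+1}]$ differ from those attached to $[\mu_{n-1}, \mu_n]$, so no exact difference of a single function can be formed. I would therefore estimate $|\dd| \leq \|\dd\|_\infty$ and $|f^{5)}| \leq \|f^{5)}\|_\infty$ from the outset, reducing the task to bounding $\dfrac{1}{2\hst{n}}\left(\dfrac{\hnpe{5} + \hnme{5}}{120(\hnm + \hnp)} + \dfrac{h_{(n-1)+}^5 + h_{(n-1)-}^5}{120(h_{(n-1)-} + h_{(n-1)+})}\right)$. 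Using that $x^5 + y^5$ is divisible by $x + y$, each bracket collapses to $(h^4 + 10 h^2 d^2 + 5 d^4)/120$ for the pertinent cell. The delicate point, which is the heart of the proof, is that after inserting $h_n = \hst{n} + a_n$ and $h_{n-1} = \hst{n} - b_n$ and dividing by $2\hst{n}$, every monomial except the top-degree ones in $a_n, b_n, d$ retains a factor $\hst{n}$ and is hence clean; bounded through $\hst{n} \leq M_N^*/2$, $|a_n|,|b_n| \leq C_N$ and $|d| \leq D_N$, these give precisely the terms of $Z_N$ that carry no $1/m_N^\star$. Only the top-degree residual, which mirrors $a^4 + 10 a^2 d^2 + 5 d^4$, keeps a surviving factor $1/(2\hst{n}) \leq 1/m_N^\star$ (Definition \ref{min-points}), and bounding it produces the final group $\frac{16}{m_N^\star}(C_N^4 + 10 C_N^2 D_N^2 + 5 D_N^4)$ of (\ref{DEFZN}). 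Tracking which monomials survive the division, and confirming that the unavoidable factor $1/\hst{n}$ appears only at top degree, is the step that demands the most care.
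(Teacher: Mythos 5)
Your proposal is correct and follows essentially the same route as the paper's own proof: the same splitting of $\varepsilon_n(f)$ by derivative order, the same mean-value/Taylor device that trades the evaluation at $\mu_{n-1/2}$ for one at $\mu_{n+1/2}$ plus a $(\dd f^{(j)})'$ term with the $c_{n-1}$ coefficient, the same reorganization of each coefficient difference into a $2h_n^*$-multiple plus a $(d_{n-1}-d_n)$-multiple (which the paper carries out explicitly through $h_n = h_n^* + a_n$, $h_{n-1} = h_n^* - b_n$, $b_n - a_n = d_{n-1} - d_n$), and the same expansion of the fifth-order remainder in which only the top-degree monomials $a_n^4 + 10 a_n^2 d_n^2 + 5 d_n^4$ and $b_n^4 + 10 b_n^2 d_{n-1}^2 + 5 d_{n-1}^4$ retain the factor $1/h_n^* \leq 2/m_N^*$. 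The only difference is one of completeness, not of method: where you assert that the coefficient differences ``reorganize'' as $2h_n^* P_n + (d_{n-1}-d_n)Q_n$ with the stated bounds, the paper verifies these identities explicitly (e.g.\ $b_n^2 d_{n-1} - a_n^2 d_n = \{b_n^2 + (a_n+b_n)d_n\}(d_{n-1}-d_n)$), so your outline fills in to exactly the paper's argument.
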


\begin{proof}
According to Equations (\ref{Enclass5}), in the second part of Lemma \ref{mother-formula-2}, and (\ref{epsnDEF}),
\begin{multline}
\varepsilon_n(f) = \frac{1}{2 h_n^*} \left\{\dd(\mu_{n-1/2}) \left[d_{n-1} f''(\mu_{n-1/2}) \phantom{\frac{h_{n-1}^2}{6}} \right. \right.\\
+ \frac{h_{n-1}^2 + 3 d_{n-1}^2}{6} f'''(\mu_{n-1/2}) + \frac{h_{n-1}^2 d_{n-1} + d_{n-1}^3}{6} f^{4)}(\mu_{n-1/2})\\
+ \left. \frac{h_{(n-1)+}^5 f^{5)}(\xi_{(n-1)+}) + h_{(n-1)-}^5 f^{5)}(\xi_{(n-1)-})}{120 (h_{(n-1)-} + h_{(n-1)+})} \right]\\
- \dd(\mu_{n+1/2}) \left[d_n f''(\mu_{n+1/2})+ \frac{h_n^2 + 3 d_n^2}{6} f'''(\mu_{n+1/2})  \right.\\
+ \left. \left. \frac{h_n^2 d_n + d_n^3}{6} f^{4)}(\mu_{n+1/2}) + \frac{h_{n+}^5 f^{5)}(\xi_{n+}) + h_{n-}^5 f^{5)}(\xi_{n-})}{120 (h_{n-} + h_{n+})} \right] \right\},
\end{multline}
which gives (\ref{epsnclass5}) with
\begin{multline}
\label{eq-eps1n}
\varepsilon_n^{(1)}(f) = \frac{1}{2 h_n^*} \{ d_{n-1} \dd(\mu_{n-1/2}) f''(\mu_{n-1/2})\\
-d_n \dd(\mu_{n+1/2}) f''(\mu_{n+1/2}) \},
\end{multline}
\begin{multline}
\label{eq-eps2n}
\varepsilon_n^{(2)}(f) = \frac{1}{2 h_n^*} \left\{ \frac{h_{n-1}^2 + 3 d_{n-1}^2}{6} \dd(\mu_{n-1/2}) f'''(\mu_{n-1/2}) \right.\\
\left. - \frac{h_n^2 + 3 d_n^2}{6} \dd(\mu_{n+1/2}) f'''(\mu_{n+1/2}) \right\},
\end{multline}
\begin{multline}
\label{eq-eps3n}
\varepsilon_n^{(3)}(f) = \frac{1}{2 h_n^*} \left\{ \frac{h_{n-1}^2 d_{n-1} + d_{n-1}^3}{6} \dd(\mu_{n-1/2}) f^{4)}(\mu_{n-1/2}) \right.\\
\left. - \frac{h_n^2 d_n + d_n^3}{6} \dd(\mu_{n+1/2}) f^{4)}(\mu_{n+1/2}) \right\},
\end{multline}
\begin{multline}
\label{eq-eps4n}
\varepsilon_n^{(4)}(f)\\
= \frac{1}{2 h_n^*} \left\{ \dd(\mu_{n-1/2}) \frac{h_{(n-1)+}^5 f^{5)}(\xi_{(n-1)+}) + h_{(n-1)-}^5 f^{5)}(\xi_{(n-1)-})}{120 (h_{(n-1)-} + h_{(n-1)+})} \right.\\
\left. - \dd(\mu_{n+1/2}) \frac{h_{n+}^5 f^{5)}(\xi_{n+}) + h_{n-}^5 f^{5)}(\xi_{n-})}{120 (h_{n-} + h_{n+})} \right\}.
\end{multline}

\par We will prove (\ref{boundeps1n})--(\ref{boundeps3n}) firstly and leave the proof of (\ref{boundeps4n}) for later.

\par Notice that, for $r = 1,2,3$, we have by Taylor that
\begin{multline}\label{Taylor}
\dd(\mu_{n-1/2}) f^{r+1)}(\mu_{n-1/2}) = \dd(\mu_{n+1/2}) f^{r+1)}(\mu_{n+1/2})\\
- 2 h_n^* (\dd f^{r+1)})'(c^{(r)}_n),
\end{multline}
and hence the expressions (\ref{eq-eps1n})--(\ref{eq-eps3n}) can be rewritten as follows:
\begin{equation}
\label{eq-eps1n-BIS}
\varepsilon_n^{(1)}(f) = \frac{(d_{n-1} - d_n) \dd(\mu_{n+1/2})}{2 h_n^*} f''(\mu_{n+1/2}) - d_{n-1} (\dd f'')'(c^{(1)}_n),
\end{equation}
\begin{multline}
\label{eq-eps2n-BIS}
\varepsilon_n^{(2)}(f) = \frac{(h_{n-1}^2 + 3 d_{n-1}^2) - (h_n^2 + 3 d_n^2)}{12 h_n^*} \dd(\mu_{n+1/2}) f'''(\mu_{n+1/2})\\
- \frac{(h_{n-1}^2 + 3 d_{n-1}^2)}{6} (\dd f''')'(c^{(2)}_n),
\end{multline}
\begin{multline}
\label{eq-eps3n-BIS}
\varepsilon_n^{(3)}(f) = \frac{(h_{n-1}^2 d_{n-1} + d_{n-1}^3) - (h_n^2 d_n + d_n^3)}{12 h_n^*} \dd(\mu_{n+1/2}) f^{4)}(\mu_{n+1/2})\\
- \frac{(h_{n-1}^2 d_{n-1} + d_{n-1}^3)}{6} (\dd f^{4)})'(c^{(3)}_n).
\end{multline}

\par Now we proceed to bound each of these three terms separately.

\begin{itemize}
\item Bound for $|\varepsilon_n^{(1)}(f)|$: the bound (\ref{boundeps1n}) follows immediately from (\ref{eq-eps1n-BIS}).

\item Bound for $|\varepsilon_n^{(2)}(f)|$: thanks to Equation (\ref{odd}) we have
\begin{multline}
h_{n-1}^2 - h_n^2 = (h_{n-1} + h_n) (h_{n-1} - h_n)\\
= \{2 h_n^* - (d_{n-1} - d_n)\} (h_{n-1} - h_n).
\end{multline}

\par Thus, noticing that $h_{n-1} - h_n = -a_n - b_n$, which holds in virtue of Definitions \ref{ANDEF}, \ref{BNDEF} and Equations (\ref{ds-and-hs}), (\ref{ds-and-hs-BIS}),\footnote{The identity $h_n - h_{n-1} = a_n + b_n$ is also a rewriting of Equation (\ref{eq-lemma-mean-value}) in Lemma \ref{lemma-mean-value}.} we arrive at
\begin{multline}
\frac{(h_{n-1}^2 + 3 d_{n-1}^2) - (h_n^2 + 3 d_n^2)}{12 h_n^*}\\
= \frac{\{2 h_n^* - (d_{n-1} - d_n)\} (-a_n - b_n) + 3 (d_{n-1} + d_n) (d_{n-1} - d_n)}{12 h_n^*}\\
= -\frac{a_n + b_n}{6}\\
+ \frac{\{3 (d_{n-1} + d_n)+ a_n + b_n\} (d_{n-1} - d_n)}{12 h_n^*},
\end{multline}
and then Equation (\ref{eq-eps2n-BIS}) can be rewritten as
\begin{multline}
\label{eq-eps2n-BIS-BIS}
\varepsilon_n^{(2)}(f) = -\frac{a_n + b_n}{6} \dd(\mu_{n+1/2}) f'''(\mu_{n+1/2})\\
+ \frac{\{3 (d_{n-1} + d_n) + a_n + b_n\} }{6} \frac{(d_{n-1} - d_n) \dd(\mu_{n+1/2})}{2 h_n^*} f'''(\mu_{n+1/2})\\
- \frac{(h_{n-1}^2 + 3 d_{n-1}^2)}{6} (\dd f''')'(c^{(2)}_n),
\end{multline}
which implies
\begin{multline}
\label{quasi-boundeps2n}
|\varepsilon_n^{(2)}(f)| \leq \frac{C_N}{3} \|\dd f'''\|_\infty + \frac{3 D_N + C_N}{3} \beta_N \|f'''\|_\infty\\
+ \frac{1}{6} \{(M_N/2)^2 + 3 D_N^2\} \|(\dd f''')'\|_\infty.
\end{multline}

\par Finally, observe that (\ref{quasi-boundeps2n}) is equivalent to (\ref{boundeps2n}).

\item Bound for $|\varepsilon_n^{(3)}(f)|$: keeping Equation (\ref{eq-eps3n-BIS}) in mind, we will begin by obtaining expressions for $h_{n-1}^2 d_{n-1} - h_n^2 d_n$ and for $d_{n-1}^3 - d_n^3$ that allow us to bound $|\varepsilon_n^{(3)}(f)|$ in an optimal way. Taking into account the previous bounds, we realize that it is convenient to bring up the $d_{n-1} - d_n$ factor as many times as possible.

\par The easiest part is $d_{n-1}^3 - d_n^3$:
\begin{equation}
\label{eq-1-bound-eps3n}
d_{n-1}^3 - d_n^3 = (d_{n-1}^2 + d_{n-1} d_n + d_n^2) (d_{n-1} - d_n).
\end{equation}

\par Let us proceed now with $h_{n-1}^2 d_{n-1} - h_n^2 d_n$. It is known, from Equations (\ref{ds-and-hs}) and (\ref{ds-and-hs-BIS}), that $h_n = h_n^* + a_n$ and $h_{n-1} = h_n^* - b_n$. Hence,
\begin{multline}
\label{eq-2-bound-eps3n} h_{n-1}^2 d_{n-1} - h_n^2 d_n = (h_n^* - b_n)^2 d_{n-1} - (h_n^* + a_n)^2 d_n\\
= (h_n^*)^2 (d_{n-1} - d_n) - 2 h_n^* (b_n d_{n-1} + a_n d_n)\\
+ b_n^2 d_{n-1} - a_n^2 d_n.
\end{multline}

\par Next step is to prove that $b_n^2 d_{n-1} - a_n^2 d_n$ is a multiple of $d_{n-1} - d_n$. Note that, in virtue of Definitions \ref{ANDEF} and \ref{BNDEF}, $b_n - a_n = d_{n-1} - d_n$. So,
\begin{multline}
b_n^2 d_{n-1} - a_n^2 d_n = b_n^2 (d_{n-1} - d_n) + (b_n^2 - a_n^2) d_n\\
= b_n^2 (d_{n-1} - d_n) + (b_n + a_n) (b_n - a_n) d_n\\
= \{ b_n^2 + (b_n + a_n) d_n \} (d_{n-1} - d_n),
\end{multline}
and Equation (\ref{eq-2-bound-eps3n}) becomes
\begin{multline}
\label{eq-3-bound-eps3n} h_{n-1}^2 d_{n-1} - h_n^2 d_n\\
= \{(h_n^*)^2 + b_n^2 + (a_n + b_n) d_n\} (d_{n-1} - d_n)\\
- 2 h_n^* (b_n d_{n-1} + a_n d_n).
\end{multline}

\par In summary,
\begin{multline}
(h_{n-1}^2 d_{n-1} + d_{n-1}^3) - (h_n^2 d_n + d_n^3)\\
= \{(h_n^*)^2 + b_n^2 + (a_n + b_n) d_n + d_{n-1}^2 + d_{n-1} d_n + d_n^2\} (d_{n-1} - d_n)\\
- 2 h_n^* (b_n d_{n-1} + a_n d_n).
\end{multline}

\par If we define now
\begin{equation}
x_n = (h_n^*)^2 + b_n^2 + (a_n + b_n) d_n + d_{n-1}^2 + d_{n-1} d_n + d_n^2,
\end{equation}
Equation (\ref{eq-eps3n-BIS}) can be rewritten as follows:
\begin{multline}
\label{eq-eps3n-BIS-BIS}
\varepsilon_n^{(3)}(f) = \frac{x_n}{6} \frac{(d_{n-1} - d_n) \dd(\mu_{n+1/2})}{2 h_n^*}  f^{4)}(\mu_{n+1/2})\\
- \frac{b_n d_{n-1} + a_n d_n}{6} \dd(\mu_{n+1/2}) f^{4)}(\mu_{n+1/2})\\
- \frac{(h_{n-1}^2 d_{n-1} + d_{n-1}^3)}{6} (\dd f^{4)})'(c^{(3)}_n).
\end{multline}

\par Finally, taking account of
\begin{align}
|x_n| & \leq (M_N^*/2)^2 + C_N^2 + 2 C_N D_N + 3 D_N^2,\\
|b_n d_{n-1} + a_n d_n| & \leq 2 C_N D_N,
\end{align}
the bound (\ref{boundeps3n}) is deduced from Equation (\ref{eq-eps3n-BIS-BIS}).
\end{itemize}

\par We now proceed with the proof of (\ref{boundeps4n}).

\par Firstly note that, for $k \in \{n-1,n\}$,
\begin{multline}
\frac{h_{k+}^5 + h_{k-}^5}{h_{k-} + h_{k+}} = h_{k+}^4 - h_{k+}^3 h_{k-} + h_{k+}^2 h_{k-}^2 - h_{k+} h_{k-}^3 + h_{k-}^4\\
= h_k^4 + 10 h_k^2 d_k^2 + 5 d_k^4.
\end{multline}
\par The equalities $h_{k-} = h_k - d_k$ and $h_{k+} = h_k + d_k$ have been used in the last step.

\par Then, in virtue of Equation (\ref{eq-eps4n}) and the positivity of $h_{k-}$ and $h_{k+}$,
\begin{multline}
\label{coarse-bound}
|\varepsilon_n^{(4)}(f)| \leq \frac{\|\dd\|_\infty \|f^{5)}\|_\infty}{240} \frac{1}{h_n^*} (h_{n-1}^4 + 10 h_{n-1}^2 d_{n-1}^2 + 5 d_{n-1}^4\\
+ h_n^4 + 10 h_n^2 d_n^2 + 5 d_n^4).
\end{multline}

\par This bound can be improved proceeding as follows. Since $h_{n-1} = h_n^* - b_n$ and $h_n = h_n^* + a_n$, we have
\begin{multline}
h_{n-1}^4 + 10 h_{n-1}^2 d_{n-1}^2 = (h_n^*)^4 - 4 (h_n^*)^3 b_n + 6 (h_n^*)^2 b_n^2 + 10 (h_n^*)^2 d_{n-1}^2\\
- 4 h_n^* b_n^3 - 20 h_n^* b_n d_{n-1}^2 + b_n^4 + 10 b_n^2 d_{n-1}^2
\end{multline}
and
\begin{multline}
h_n^4 + 10 h_n^2 d_n^2 = (h_n^*)^4 + 4 (h_n^*)^3 a_n + 6 (h_n^*)^2 a_n^2 + 10 (h_n^*)^2 d_n^2\\
+ 4 h_n^* a_n^3 + 20 h_n^* a_n d_n^2 + a_n^4 + 10 a_n^2 d_n^2,
\end{multline}
which allows rewriting the inequality (\ref{coarse-bound}) as
\begin{multline}
|\varepsilon_n^{(4)}(f)| \leq \frac{\|\dd\|_\infty \|f^{5)}\|_\infty}{240} \{ (h_n^*)^3 - 4 (h_n^*)^2 b_n + 6 h_n^* b_n^2 + 10 h_n^* d_{n-1}^2 - 4 b_n^3\\
- 20 b_n d_{n-1}^2 + (h_n^*)^3 + 4 (h_n^*)^2 a_n + 6 h_n^* a_n^2 + 10 h_n^* d_n^2 + 4 a_n^3 + 20 a_n d_n^2\\
+ \frac{1}{h_n^*} (b_n^4 + 10 b_n^2 d_{n-1}^2 + 5 d_{n-1}^4 + a_n^4 + 10 a_n^2 d_n^2 + 5 d_n^4) \}.
\end{multline}

\par The proof ends by using the bounds $h_n^* \leq M_N^*/2$, $|b_n| \leq C_N$, $|a_n| \leq C_N$, $|d_n| \leq D_N$, and $1/h_n^* \leq 2/m_N^*$.
\end{proof}

\par We can now state the following result of convergence.

\begin{thm}[order $2$ of convergence for schemes of type I]\label{theorem-type-I}
Let $\dd$ be a function of class ${\rm C}^3([-1,1])$ such that $\dd(-1) = \dd(1) = 0$. Suppose that the sets of nodes and points satisfy the conditions stated in Section \ref{SECTION-mesh} and that $\beta_N(\dd)$ goes to zero at least with order $2$. That is to say, suppose that
\begin{align}
\label{interlacing-thm-I}
& -1 = \mu_{1/2} < \mu_1 < \mu_{1+1/2} < \cdots < \mu_{N-1/2} < \mu_N < \mu_{N+1/2} = 1,\\
\label{mesh-order-is-1-thm-I} & \tiM_N = O(N^{-1}),\\
\label{mid-point-1-hyp-thm-I} & D_N^\star = O(N^{-q})\ \mbox{with}\ q \geq 2,\\
\label{mid-point-2-hyp-thm-I} & D_N = O(N^{-r})\ \mbox{with}\ r \geq 2,\\
\label{hyp-min-points-thm-I} & \frac{1}{m_N^*} = O(N^s)\ \mbox{with}\ 1 \leq s  \leq 4m - 2,\ \mbox{where}\ m = \min\{q,r\},\\
\label{hyp-betaN-thm-I} & \beta_N(\dd) = O(N^{-t})\ \mbox{with}\ t \geq 2.
\end{align}

\par Then, the scheme (\ref{Type-I-scheme}) converges with order $2$ for any function $f$ of class ${\rm C}^5([-1,1])$, and the same is true if [$D_N = D_N^* = 0$ and the hypotheses (\ref{interlacing-thm-I}) and (\ref{mesh-order-is-1-thm-I}) hold] or if
[$d_1 = \cdots = d_{N-1}$ and the hypotheses (\ref{interlacing-thm-I})--(\ref{hyp-min-points-thm-I}) hold].
\end{thm}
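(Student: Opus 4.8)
The plan is to read $R_n(f)$ through the error representation of Proposition \ref{ER-type-I}, $R_n(f) = \varepsilon_n(f) + E_n^\star(\dd f')$, and to show that, under the stated hypotheses, each of the three ingredients ($E_n^\star(\dd f')$, the boundary terms $\varepsilon_1,\varepsilon_N$, and the interior terms $\varepsilon_n$) is $O(N^{-2})$ uniformly in $n$. Since $\dd \in {\rm C}^3([-1,1])$ and $f \in {\rm C}^5([-1,1])$, the product $\dd f'$ lies in ${\rm C}^3([-1,1])$, so Lemma \ref{mother-formula-1} applied with $\varphi = \dd f'$ (using (\ref{mesh-order-is-1-thm-I}) and (\ref{mid-point-1-hyp-thm-I})) gives $\max_{1\le n\le N}|E_n^\star(\dd f')| = O(N^{-2})$. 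In parallel, part (A) of Lemma \ref{mother-formula-2} applied with $\varphi = f$ (using (\ref{mesh-order-is-1-thm-I}) and (\ref{mid-point-2-hyp-thm-I})) gives $\max_{1\le n\le N-1}|E_n(f)| = O(N^{-2})$, so Proposition \ref{BOUND-for-eps1-and-epsN} immediately yields $\max\{|\varepsilon_1(f)|,|\varepsilon_N(f)|\} = O(N^{-2})$. What remains, and is the substance of the proof, is to establish $\max_{2\le n\le N-1}|\varepsilon_n(f)| = O(N^{-2})$.

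For the interior terms I would use the four-term decomposition (\ref{epsnclass5}) supplied by Proposition \ref{BOUND-for-epsn} and simply verify that each of the bounds (\ref{boundeps1n})--(\ref{boundeps4n}) is $O(N^{-2})$. First I record the orders of the elementary quantities: the hypotheses give $M_N = O(N^{-1})$ and, via Lemma \ref{lemma-mesh-points-order-is-1}, $M_N^* = O(N^{-1})$; moreover $D_N = O(N^{-r})$, $D_N^\star = O(N^{-q})$, hence $C_N = D_N + D_N^\star = O(N^{-m})$ with $m = \min\{q,r\} \ge 2$, while $\beta_N = O(N^{-t})$ with $t \ge 2$. Feeding these into (\ref{boundeps1n})--(\ref{boundeps3n}) makes every term at least $O(N^{-2})$: in (\ref{boundeps1n}) the leading contributions are $\beta_N$ and $D_N$; in (\ref{boundeps2n}) they are $C_N$ and $M_N^2$; and in (\ref{boundeps3n}) they are $(M_N^*)^2\beta_N$, $C_N D_N$ and $M_N^2 D_N$, all of which are $O(N^{-2})$ or smaller.

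The one delicate point, which I expect to be the main obstacle, is the final summand of $Z_N$ in the definition (\ref{DEFZN}), namely $(16/m_N^\star)(C_N^4 + 10 C_N^2 D_N^2 + 5 D_N^4)$ entering (\ref{boundeps4n}). Each term in the parenthesis is $O(N^{-4m})$, because $C_N = O(N^{-m})$ and $D_N = O(N^{-r})$ with $r \ge m$; on the other hand $1/m_N^\star = O(N^{s})$ with $s \le 4m-2$ by (\ref{hyp-min-points-thm-I}). Therefore this contribution is $O(N^{s-4m}) = O(N^{-2})$, and this is exactly the role of the upper bound $s \le 4m-2$ in the min-points hypothesis. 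Every other summand of $Z_N$ carries a factor $M_N^* = O(N^{-1})$ or a product of $C_N$'s and $D_N$'s, hence is $O(N^{-3})$ or smaller. Combining this with the previous paragraph gives $\max_{2\le n\le N-1}|\varepsilon_n(f)| = O(N^{-2})$, and assembling all pieces through (\ref{RnDEF}) yields $\max_{1\le n\le N}|R_n(f)| = O(N^{-2})$.

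The two special cases then follow by inspecting which terms survive. If $D_N = D_N^\star = 0$, then all $d_n$, $\dst{n}$, $a_n$, $b_n$, $C_N$ and (by Definition \ref{betaNDEF}) $\beta_N$ vanish, so (\ref{boundeps1n}) and (\ref{boundeps3n}) are identically zero, (\ref{boundeps2n}) collapses to its $M_N^2$ term, and in (\ref{boundeps4n}) the problematic $1/m_N^\star$ contribution disappears, leaving only $(M_N^*)^3 = O(N^{-3})$; thus neither (\ref{hyp-min-points-thm-I}) nor (\ref{hyp-betaN-thm-I}) is needed, and (\ref{interlacing-thm-I}) together with (\ref{mesh-order-is-1-thm-I}) (which forces $M_N, M_N^* = O(N^{-1})$) suffices. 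If instead $d_1 = \cdots = d_{N-1}$, then $d_{n-1} - d_n = 0$ for every $n$, so $\beta_N(\dd) = 0$ by Definition \ref{betaNDEF}; hypothesis (\ref{hyp-betaN-thm-I}) is therefore automatic, and the main argument applies verbatim under the remaining assumptions (\ref{interlacing-thm-I})--(\ref{hyp-min-points-thm-I}). Apart from the $1/m_N^\star$ bookkeeping in $Z_N$, the entire proof is a direct order count against the bounds already prepared in Propositions \ref{BOUND-for-eps1-and-epsN} and \ref{BOUND-for-epsn}.
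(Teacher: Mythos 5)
Your proposal is correct and follows essentially the same route as the paper's own proof: the error representation of Proposition \ref{ER-type-I}, Lemma \ref{mother-formula-1} for $E_n^\star(\dd f')$, Proposition \ref{BOUND-for-eps1-and-epsN} with Lemma \ref{mother-formula-2} for the boundary terms, and an order count against the four bounds of Proposition \ref{BOUND-for-epsn}, including the correct identification of the $s \leq 4m-2$ condition as what controls the $1/m_N^\star$ term in $Z_N$. Your treatment of the two special cases is also the intended one (the paper merely states that they follow by inspection of Proposition \ref{BOUND-for-epsn} and Definition \ref{betaNDEF}), so nothing is missing.
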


\begin{proof}
Let $f$ be a function of class ${\rm C}^5([-1,1])$.

\par Thanks to Proposition \ref{BOUND-for-eps1-and-epsN} and Lemma \ref{mother-formula-2} we know that
\begin{equation}
\max \{|\varepsilon_1(f)|, |\varepsilon_N(f)|\} = O(N^{-2}).
\end{equation}

\par Moreover, since $C_N = D_N + D_N^* = O(N^{-m})$, Proposition \ref{BOUND-for-epsn} implies that
\begin{align}
\max_{2 \leq n \leq N-1} |\varepsilon_n^{(1)}(f)| & = O(N^{-\min\{r,t\}}),\\
\max_{2 \leq n \leq N-1} |\varepsilon_n^{(2)}(f)| & = O(N^{-2}),\\
\max_{2 \leq n \leq N-1} |\varepsilon_n^{(3)}(f)| & = O(N^{-\min\{r+2,t+2\}}),\\
\max_{2 \leq n \leq N-1} |\varepsilon_n^{(4)}(f)| & = O(N^{-\min\{3,4m-s\}}).
\end{align}

\par In summary,
\begin{equation}
\max_{1 \leq n \leq N} |\varepsilon_n(f)| = O(N^{-2}).
\end{equation}

\par On the other hand,
\begin{equation}
\max_{1 \leq n \leq N} |E_n^\star(\dd f')| = O(N^{-2})
\end{equation}
by Lemma \ref{mother-formula-1}, and so, in virtue of Proposition \ref{ER-type-I},
\begin{equation}
\max_{1 \leq n \leq N} |R_n(f)| = O(N^{-2}),
\end{equation}
which proves convergence of order $2$.

\par The last two statements follow by simple observation of the bounds in Proposition \ref{BOUND-for-epsn}, and of Definition \ref{betaNDEF}.
\end{proof}

\par Since $\dd$ is bounded, the hypothesis (\ref{hyp-betaN-thm-I}) is automatically satisfied if
\begin{equation}
\label{whynot}
\max_{2 \leq n \leq N-1} \left|\frac{d_{n-1} - d_n}{\mu_{n + 1/2} - \mu_{n - 1/2}}\right| = O(N^{-p})\ \mbox{with}\ p \geq 2,
\end{equation}
so the reader might wonder why we have not used this assumption in the previous theorem. After all, that way the set of hypotheses would be independent of $\dd$. The reason is that, for $\dd(\mu) = 1 - \mu^2$ and the choices of nodes and points made by Haldy and Ligou (Subsection \ref{subsec-HL}), condition (\ref{whynot}) is not satisfied, while (\ref{hyp-betaN-thm-I}) holds with $t = 2$.

\par To finish this subsection, let us comment that in Proposition \ref{BOUND-for-epsn}, and hence in Theorem \ref{theorem-type-I}, we can change $\beta_N(\dd)$ for
\begin{equation}
\tilde{\beta}_N(\dd) = \max_{2 \leq n \leq N-1} \left|\frac{(d_{n-1} - d_n) \dd(\mu_{n-1/2})}{\mu_{n + 1/2} - \mu_{n - 1/2}}\right|.
\end{equation}
Indeed, we could have used
\begin{multline}
\dd(\mu_{n+1/2}) f^{r+1)}(\mu_{n+1/2}) = \dd(\mu_{n-1/2}) f^{r+1)}(\mu_{n-1/2})\\
+ 2 h_n^* (\dd f^{r+1)})'(c^{(r)}_n)
\end{multline}
instead of Equation (\ref{Taylor}) in the proof of Proposition \ref{BOUND-for-epsn}, and thus eliminate the evaluations at $\mu_{n+1/2}$ to be left with the evaluations at $\mu_{n-1/2}$.

\subsection{Application of the theory to some examples. Numerical results}\label{subsection-application-type-I}
In the tables below, $E$ will denote the maximum of the absolute values of the errors in the complete set of nodes, i. e.,
\begin{equation}\label{DEFerror}
E = \max_{1 \leq n \leq N} |\Delta_{\rm FP} f(\mu_n) - \Delta_{{\rm FP},N} f(\mu_n)|.
\end{equation}
(changing obviously $\max_{1 \leq n \leq N}$ by $\max_{1 \leq n \leq 2N}$ when HR mode is used).

\paragraph{Lee's scheme} For this scheme, operated in FR mode,
\begin{itemize}
\item Hypothesis (\ref{interlacing-thm-I}) is obviously satisfied due to the definition of the points $\{\mu_{n+1/2}\}_{n = 0}^N$.

\item Hypothesis (\ref{mesh-order-is-1-thm-I}) is satisfied in virtue of Proposition \ref{GLnodesOK}.

\item Hypothesis (\ref{mid-point-1-hyp-thm-I}), with $q = 2$, is supported by the results in Section \ref{SECTION-properties-GL}, since
\begin{align}
& |\hmu_1 - \mu_1| = |\hmu_N - \mu_N| < \mu_2 + 1\quad \mbox{and}\\
& \hmu_n - \mu_n = \frac{1}{2} \left( \frac{\mu_{n-1} + \mu_{n+1}}{2} - \mu_n \right)\ \mbox{for}\ n \in \{2,\dots,N-1\}.
\end{align}

\item Hypothesis (\ref{mid-point-2-hyp-thm-I}) is obviously satisfied because $D_N = 0$.

\item Hypothesis (\ref{hyp-min-points-thm-I}) holds with $s = 2$ in virtue of Proposition \ref{GLnodesOK}, because
\begin{equation}
\frac{1}{\mu_2 + 1} < \frac{1}{m_N^*} < \frac{1}{\mu_1 + 1}.
\end{equation}

\item Hypothesis (\ref{hyp-betaN-thm-I}) is obviously satisfied because $\beta_N(\dd) = 0$.
\end{itemize}

\par According to Theorem \ref{theorem-type-I}, Lee's scheme in FR mode is expected to converge with order $2$. Table \ref{tableLeeFR} shows the numerical results got for the FP Laplacian of $f(\mu) = {\rm e}^\mu$. These results are in agreement with the theoretical prediction. Roundoff errors start spoiling the computations in the last row, where the order decays down to $1.53$.

\begin{table}[H]
\centering
\begin{tabular}{ccccc}
$N$ & $E$ & order & $q$ & $s$\\
\hline\hline\\[-2.9mm]
$50$ & $1.54 \times 10^{-2}$ & & &\\
$100$ & $3.96 \times 10^{-3}$ & $1.96$ & $1.98$ & $1.98$\\
$500$ & $1.61 \times 10^{-4}$ & $1.99$ & $1.99$ & $1.99$\\
$1000$ & $4.02 \times 10^{-5}$ & $2.00$ & $2.00$ & $2.00$\\
$5000$ & $1.61 \times 10^{-6}$ & $2.00$ & $2.00$ & $2.00$\\
$10000$ & $4.10 \times 10^{-7}$ & $1.97$ & $2.00$ & $2.00$\\
$20000$ & $1.42 \times 10^{-7}$ & $1.53$ & $2.00$ & $2.00$
\end{tabular}
\caption{Numerical results for Lee's scheme operated in FR mode. $f(\mu) = {\rm e}^\mu$, $\dd(\mu) = 1 - \mu^2$.} \label{tableLeeFR}
\end{table}

\par In HR mode, Lee's scheme behaves similarly, that is, converges with order $2$, but roundoff errors appear earlier, due to the extreme proximity of the nodes in the neighborhood of $0$.

\par This scheme converges with order $2$ if $\dd$ is any function of class ${\rm C}^3([-1,1])$ such that $\dd(-1) = \dd(1) = 0$. The reason is that $D_N = 0$ implies $\beta_N(\dd) = 0$.

\paragraph{Haldy\=/Ligou's scheme} For this scheme, operated in FR mode,
\begin{itemize}
\item Hypothesis (\ref{interlacing-thm-I}) is supported by the results in Section \ref{SECTION-properties-GL}.

\item Hypothesis (\ref{mesh-order-is-1-thm-I}) is satisfied in virtue of Proposition \ref{GLnodesOK}.

\item Hypotheses (\ref{mid-point-1-hyp-thm-I}) and (\ref{mid-point-2-hyp-thm-I}) hold with $q = r = 2$, which is again supported by the results in Section \ref{SECTION-properties-GL}.

\item Hypothesis (\ref{hyp-min-points-thm-I}) holds with $s = 2$ in virtue of Proposition \ref{GLweightsOK}, because
\begin{equation}
\frac{1}{m_N^*} = \frac{1}{w_1}.
\end{equation}

\item If $\dd(\mu) = 1 - \mu^2$, hypothesis (\ref{hyp-betaN-thm-I}) is satisfied with $t = 2$. This assertion is supported by some asymptotic analysis of the same type as that considered in \cite{LPSE23}. We observe that the number $\beta_N(\dd)$ can alternatively be written as
$$
\beta_N(\dd) = \max_{2\le n \le N-1} \left|\left(\frac{\mu_{n+1}-\mu_{n-1}}{2w_n}-1\right) \dd(\mu_{n+1/2})\right|,
$$
and that it is known (see \cite{LPSE23}, Theorem 1) that
$$
\Delta_k=\left|\left(\frac{\mu_{k+1}-\mu_{k-1}}{2w_k}-1\right)\right| = O(N^{-2})
$$
if $\mu_{k\pm 1}$ are in a fixed interval $[a,b]\subset(-1,1)$. Contrarily, when $k$
is fixed, it is known that $\Delta_k = O(1)$ (with a small error constant), but in that
case we have $\dd(\mu_{k+1/2}) = O(N^{-2})$.
\end{itemize}

\par The comments made in the previous example are valid for this one. Table \ref{tableHLFR} shows the numerical results, which corroborate that Haldy\=/Ligou's scheme in FR mode converges with order $2$.

\begin{table}[H]
\centering
\begin{tabular}{ccccccc}
$N$ & $E$ & order & $q$ & $r$ & $s$ & $t$\\
\hline\hline\\[-2.9mm]
$50$ & $8.68 \times 10^{-3}$ & & & & &\\
$100$ & $2.20 \times 10^{-3}$ & $1.98$ & $1.98$ & $1.98$ & $1.99$ & $1.99$\\
$500$ & $8.92 \times 10^{-5}$ & $1.99$ & $1.99$ & $1.99$ & $1.99$ & $2.00$\\
$1000$ & $2.23 \times 10^{-5}$ & $2.00$ & $2.00$ & $2.00$ & $2.00$ & $2.00$\\
$5000$ & $8.95 \times 10^{-7}$ & $2.00$ & $2.00$ & $2.00$ & $2.00$ & $2.00$\\
$10000$ & $2.31 \times 10^{-7}$ & $1.95$ & $2.00$ & $2.00$ & $2.00$ & $2.00$\\
$20000$ & $9.75 \times 10^{-8}$ & $1.24$ & $2.00$ & $2.00$ & $2.00$ & $2.00$
\end{tabular}
\caption{Numerical results for Haldy\=/Ligou's scheme operated in FR mode. $f(\mu) = {\rm e}^\mu$, $\dd(\mu) = 1 - \mu^2$.} \label{tableHLFR}
\end{table}

\par In HR mode, however, Haldy\=/Ligou's scheme is not convergent. This is shown in Table \ref{tableHLHR}. The `$t$' column tells us that the problem is that the hypothesis (\ref{hyp-betaN-thm-I}) is no longer satisfied. We have not included the `order' column since, in the absence of convergence, this value loses interest. On the other hand, Figure \ref{fig-HLHR} shows that it is at nodes close to $0$ where the scheme fails, which the reader can connect with the definition of $\beta_N(\dd)$ and the fact that $\dd$ is not zero at $0$, while points $\mu_{n+1/2}$ are accumulating quadratically on both sides of $0$.

\par The above mentioned accumulation of points $\mu_{n+1/2}$ around $0$ does not exist in FR mode, and the quadratic accumulation towards $-1$ and $1$ is not a problem, since there $\dd(\mu_{n+1/2})$ tends to zero at a rate that compensates for this accumulation and is enough for $\beta_N(\dd)$ to be a $O(N^{-2})$.

\begin{table}[H]
\centering
\begin{tabular}{cccccc}
$2N$ & $E$ & $q$ & $r$ & $s$ & $t$\\
\hline\hline\\[-2.9mm]
$50$ & $2.20 \times 10^{-1}$ & & & &\\
$100$ & $2.21 \times 10^{-1}$ & $1.97$ & $1.96$ & $1.97$ & $-1.14 \times 10^{-2}$\\
$500$ & $2.21 \times 10^{-1}$ & $1.99$ & $1.99$ & $1.99$ & $-1.61 \times 10^{-3}$\\
$1000$ & $2.21 \times 10^{-1}$ & $2.00$ & $2.00$ & $2.00$ & $-1.19 \times 10^{-4}$\\
$5000$ & $2.22 \times 10^{-1}$ & $2.00$ & $2.00$ & $2.00$ & $-1.64 \times 10^{-5}$\\
$10000$ & $2.28 \times 10^{-1}$ & $2.00$ & $2.00$ & $2.00$ & $-1.08 \times 10^{-6}$\\
$20000$ & $3.64 \times 10^{-1}$ & $2.00$ & $2.00$ & $2.00$ & $-4.54 \times 10^{-9}$
\end{tabular}
\caption{Numerical results showing that Haldy\=/Ligou's scheme operated in HR mode is not convergent. $f(\mu) = {\rm e}^\mu$, $\dd(\mu) = 1 - \mu^2$.} \label{tableHLHR}
\end{table}

\begin{figure}[ht]
    \centering
    \subfloat[$2N = 20$.]{
    \includegraphics[width = 0.4\textwidth, height = 0.3\textwidth] {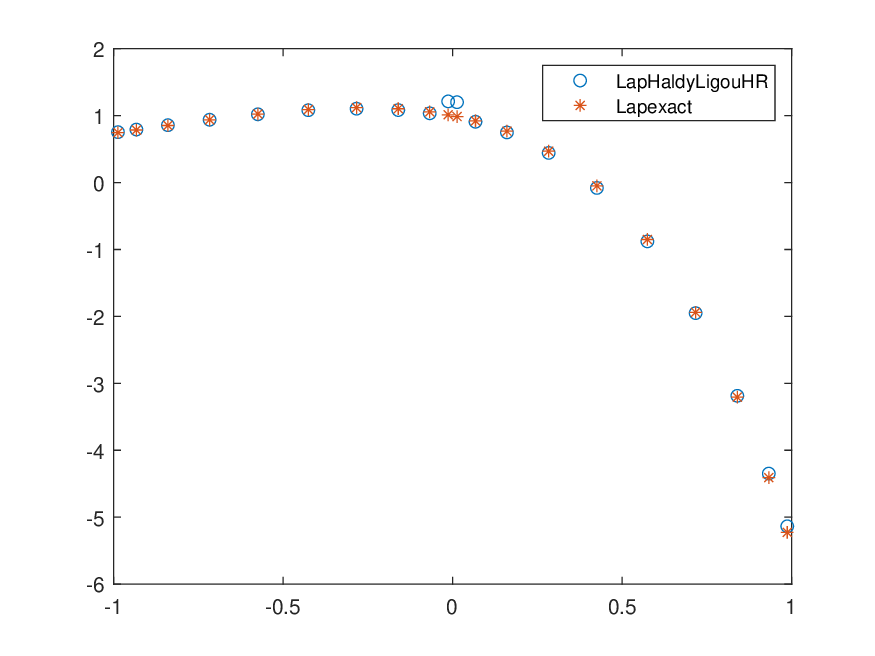}
    \label{fig-HLHR-2Neq20}
    }
    \subfloat[$2N = 100$.]{
    \includegraphics[width = 0.4\textwidth, height = 0.3\textwidth] {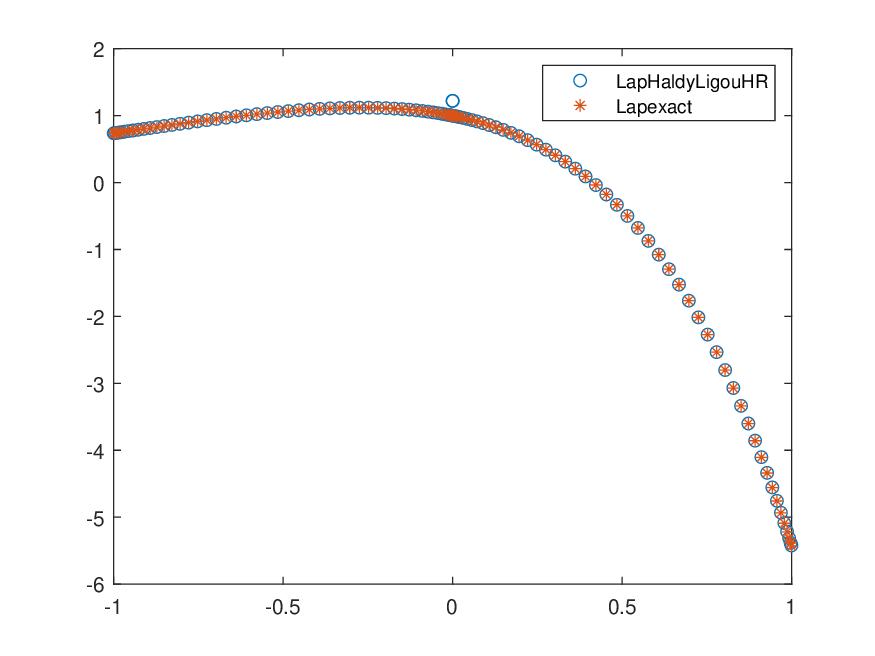}
    \label{fig-HLHR-2Neq100}
    }
    \caption{Haldy\=/Ligou's scheme in HR mode cannot compute good approximations of the FP Laplacian in the vicinity of $0$. $f(\mu) = {\rm e}^\mu$, $\dd(\mu) = 1 - \mu^2$.}
    \label{fig-HLHR}
\end{figure}

\paragraph{Uniform mesh}\label{par-uniform-mesh} (a non\=/GL scheme of type I and order $2$)
Let us take $h = 2/N$ and define
\begin{itemize}
\item $\mu_1 = -1 + h/2$, $\mu_{n+1} = \mu_n + h$ for $n = 1,\dots,N-1$,
\item $\mu_{1/2} = -1$, $\mu_{n+1/2} = (\mu_n + \mu_{n+1})/2$ for $n = 1,\dots,N-1$, and $\mu_{N+1/2} = 1$.
\end{itemize}

\par Then, by Theorem \ref{theorem-type-I}, the corresponding scheme of type I converges with order $2$, because $D_N^* = D_N = 0$ and the hypotheses (\ref{interlacing-thm-I}) and (\ref{mesh-order-is-1-thm-I}) are trivially met. Results are shown in Table \ref{tableUNIFORM-order-2}.

\begin{table}[H]
\centering
\begin{tabular}{ccc}
$N$ & $E$ & order\\
\hline\hline\\[-2.9mm]
$50$ & $2.44 \times 10^{-3}$ &\\
$100$ & $6.23 \times 10^{-4}$ & $1.97$\\
$500$ & $2.53 \times 10^{-5}$ & $1.99$\\
$1000$ & $6.33 \times 10^{-6}$ & $2.00$\\
$5000$ & $2.54 \times 10^{-7}$ & $2.00$\\
$10000$ & $6.34 \times 10^{-8}$ & $2.00$\\
$20000$ & $5.12 \times 10^{-8}$ & $3.09 \times 10^{-1}$
\end{tabular}
\caption{Numerical results for the scheme of type I of order $2$ on uniform mesh. $f(\mu) = {\rm e}^\mu$, $\dd(\mu) = 1 - \mu^2$.} \label{tableUNIFORM-order-2}
\end{table}

\par If we reinterpret the discrete zeroth and first moment properties of Definition \ref{DEF-discrete-moment} using the quadrature formula
\begin{multline}\label{new-quadrature}
\int_{-1}^1 G(\mu)\ d\mu = \int_{-1}^{\mu_1} G(\mu)\ d\mu + \int_{\mu_1}^{\mu_{N}} G(\mu)\ d\mu + \int_{\mu_{N}}^1 G(\mu)\ d\mu\\
\approx (h/2) G_1 + \mbox{(trapezoidal approximation)} + (h/2) G_N = h \sum_{n=1}^N G_n,
\end{multline}
understanding that $G_n$ is an approximation of $G(\mu_n)$, then this scheme satisfies both properties.

\par If the mesh $\{\mu_n\}_{n=1}^N$ is uniform, but the distance between $\mu_1$ and $-1$ or between $\mu_N$ and $1$ is different from $h/2$, then the scheme can easily stop being of order $2$. Table \ref{tableUNIFORM-order-1} shows that the order reduces to $1$ if $\mu_1 = -1 + 2/N$, $\mu_N = 1 - 1/N$ and $\{\mu_n\}_{n=2}^{N-1}$ are placed so that $\{\mu_n\}_{n=1}^N$ is uniform. The reason for the order drop is that now $D_N^*$ is only a $O(N^{-1})$ (i.~e., $q = 1$ in Table \ref{tableUNIFORM-order-1}).

\begin{table}[H]
\centering
\begin{tabular}{cccc}
$N$ & $E$ & order & $q$\\
\hline\hline\\[-2.9mm]
$50$ & $7.38 \times 10^{-3}$ & &\\
$100$ & $3.68 \times 10^{-3}$ & $1.00$ & $1.01$\\
$500$ & $7.36 \times 10^{-4}$ & $1.00$ & $1.00$\\
$1000$ & $3.68 \times 10^{-4}$ & $1.00$ & $1.00$\\
$5000$ & $7.36 \times 10^{-5}$ & $1.00$ & $1.00$\\
$10000$ & $3.68 \times 10^{-5}$ & $1.00$ & $1.00$\\
$20000$ & $1.84 \times 10^{-5}$ & $1.00$ & $1.00$
\end{tabular}
\caption{Numerical results for an instance of scheme of type I of order $1$ on uniform mesh. $f(\mu) = {\rm e}^\mu$, $\dd(\mu) = 1 - \mu^2$.} \label{tableUNIFORM-order-1}
\end{table}

\section{Schemes of type II}\label{SECTION-type-II}
We will call schemes of type II those schemes obtained by substituting in Equations (\ref{Type-I-scheme-a})--(\ref{Type-I-scheme-c}) the values $\{\dd(\mu_{n+1/2})\}_{n=0}^{N}$ by $\{\alpha_{n+1/2}\}_{n=0}^{N}$, being $\alpha_{1/2} = \dd(\mu_{1/2}) = 0$, and $\alpha_{n+1/2}$ a certain approximation of $\dd(\mu_{n+1/2})$ for $n = 1,\dots,N$. We notice that $\alpha_{N+1/2}$ can be $0$ or not.

\par So, these schemes are defined as follows:
\begin{equation}
\label{Type-II-scheme-a}
\Delta_{{\rm FP},N} f(\mu_1) = \frac{\alpha_{1+1/2} \frac{f(\mu_2) - f(\mu_1)}{\mu_2 - \mu_1}}{\mu_{1+1/2} + 1},
\end{equation}

\begin{eqnarray}
\nonumber & & \Delta_{{\rm FP},N} f(\mu_n) = \frac{\alpha_{n+1/2} \frac{f(\mu_{n+1}) - f(\mu_n)}{\mu_{n+1} - \mu_n} - \alpha_{n-1/2} \frac{f(\mu_n) - f(\mu_{n-1})}{\mu_n - \mu_{n-1}}}{\mu_{n+1/2} - \mu_{n-1/2}}\\
\label{Type-II-scheme-b} & & \mbox{for}\ n = 2,\dots,N-1,
\end{eqnarray}

\begin{equation}
\label{Type-II-scheme-c}
\Delta_{{\rm FP},N} f(\mu_N) = \frac{- \alpha_{N-1/2} \frac{f(\mu_N) - f(\mu_{N-1})}{\mu_N - \mu_{N-1}}}{1 - \mu_{N-1/2}}.
\end{equation}

\par Obviously, the family of schemes of type I is strictly contained in the family of schemes of type II.

\par After (\ref{Type-I-scheme}), when $\alpha_{N+1/2} = 0$ a scheme of type II can be written as
\begin{eqnarray}
\nonumber & & \Delta_{{\rm FP},N} f(\mu_n) = \frac{\alpha_{n+1/2} \frac{f(\mu_{n+1}) - f(\mu_n)}{\mu_{n+1} - \mu_n} - \alpha_{n-1/2} \frac{f(\mu_n) - f(\mu_{n-1})}{\mu_n - \mu_{n-1}}}{\mu_{n+1/2} - \mu_{n-1/2}}\\
\label{Type-II-scheme} & & \mbox{for}\ n = 1,\dots,N.
\end{eqnarray}

\par The values of $\alpha_{n+1/2}$ are computed from those of the nodes $\mu_n$ and the points $\mu_{n+1/2}$. Let us explain how this can be done. Notice that, having fixed $\alpha_{1/2} = 0$, there is only one way of choosing $\{\alpha_{n+1/2}\}_{n=1}^{N}$ that makes the scheme exact on $\PO_1$. Indeed, since it is obviously exact when $f$ is constant and $\Delta_{\rm FP} f(\mu) = \dd'(\mu)$ when $f(\mu) = \mu$, we conclude that a scheme of type II is exact on $\PO_1$ if, and only if,
\begin{equation}
\label{condition-on-alpha}
\frac{\alpha_{n+1/2} - \alpha_{n-1/2}}{\mu_{n+1/2} - \mu_{n-1/2}} = \dd'(\mu_n)\ \mbox{for}\ n = 1,\dots,N.
\end{equation}

\par Considering now that $\dd'(\mu_n) = -2 \mu_n$, we see that $\{\alpha_{n+1/2}\}_{n=0}^{N}$ must be defined by
\begin{align}
\label{alpha-DEF-1} & \alpha_{1/2} = \dd(\mu_{1/2}) = 0,\\
\label{alpha-DEF-2} & \alpha_{n+1/2} = \alpha_{n-1/2} - 2 \mu_n (\mu_{n+1/2} - \mu_{n-1/2})\ \mbox{for}\ n = 1,\dots,N
\end{align}
if we want exactness on $\PO_1$.

\begin{defi}\label{DEFlambdan}
For $n = 0,\dots,N$,
\begin{equation}
\lambda_n = \dd(\mu_{n+1/2}) - \alpha_{n+1/2},
\end{equation}
where it is understood that $\{\alpha_{n+1/2}\}_{n=0}^{N}$ is the set defined by (\ref{alpha-DEF-1})--(\ref{alpha-DEF-2}).
\end{defi}

\begin{defi}\label{DEFLambdaN}
$\Lambda_N = \max_{0\leq n \leq N} |\lambda_n|$.
\end{defi}

\par For simplicity, we have decided to use the notations $\alpha_{n+1/2}$, $\lambda_n$ and $\Lambda_N$, and not $\alpha_{n+1/2}(\dd),\dots$, even when all three depend on $\dd$. Later on, it will be useful to remember this fact.

\par The following result states precisely what we mean by saying that $\alpha_{n+1/2}$ is an approximation of $\dd(\mu_{n+1/2})$.

\begin{thm}\label{alpha-approximates-D}
Assume that the hypotheses (\ref{interlacing}), (\ref{mesh-order-is-1}), and (\ref{mid-point-1-hyp}) hold. Then
\begin{equation}
\Lambda_N = O(N^{-2}).
\end{equation}
\end{thm}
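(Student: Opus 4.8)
The plan is to exploit the telescoping structure that the defining recurrence (\ref{alpha-DEF-1})--(\ref{alpha-DEF-2}) imposes on the quantities $\lambda_n$, and to identify the resulting increments with the elementary errors $E_n^*$ of Lemma \ref{mother-formula-1} evaluated at the diffusivity $\dd$. First I would look at the first difference $\lambda_n-\lambda_{n-1}$. Combining Definition \ref{DEFlambdan} with the recurrence (\ref{alpha-DEF-2}) and the identity $\dd'(\mu_n)=-2\mu_n$, one gets
\begin{equation}
\lambda_n-\lambda_{n-1}=\bigl[\dd(\mu_{n+1/2})-\dd(\mu_{n-1/2})\bigr]+2\mu_n(\mu_{n+1/2}-\mu_{n-1/2}).
\end{equation}
Comparing this with the definition (\ref{EnstarDEF}) of $E_n^*(\varphi)$ applied to $\varphi=\dd$, the right\=/hand side is exactly $-(\mu_{n+1/2}-\mu_{n-1/2})E_n^*(\dd)$. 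Thus $E_n^*(\dd)$ measures precisely the discrepancy, appearing implicitly in (\ref{condition-on-alpha}), between the exact slope $\dd'(\mu_n)$ and the divided difference of $\dd$ across the cell $[\mu_{n-1/2},\mu_{n+1/2}]$.

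The next step is to telescope. Since $\mu_{1/2}=-1$ gives $\dd(\mu_{1/2})=0=\alpha_{1/2}$, the starting value is $\lambda_0=0$, so summing the increments yields
\begin{equation}
\lambda_n=-\sum_{j=1}^{n}(\mu_{j+1/2}-\mu_{j-1/2})\,E_j^*(\dd),\qquad n=0,\dots,N.
\end{equation}
Bounding each factor $|E_j^*(\dd)|$ by its maximum and telescoping the remaining sum of cell widths, I would obtain
\begin{equation}
|\lambda_n|\le\Bigl(\max_{1\le j\le N}|E_j^*(\dd)|\Bigr)\sum_{j=1}^{n}(\mu_{j+1/2}-\mu_{j-1/2})=(\mu_{n+1/2}+1)\max_{1\le j\le N}|E_j^*(\dd)|\le 2\max_{1\le j\le N}|E_j^*(\dd)|.
\end{equation}
Hence $\Lambda_N\le 2\max_{1\le j\le N}|E_j^*(\dd)|$, and it only remains to control the right\=/hand side. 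Under (\ref{interlacing}), (\ref{mesh-order-is-1}) and (\ref{mid-point-1-hyp}), Lemma \ref{mother-formula-1} gives $\max_{1\le j\le N}|E_j^*(\dd)|=O(N^{-2})$ because $\dd\in{\rm C}^3([-1,1])$, and the conclusion $\Lambda_N=O(N^{-2})$ follows. In fact, for $\dd(\mu)=1-\mu^2$ one has $\dd''\equiv-2$ and $\dd'''\equiv0$, so the exact error representation (\ref{Enstar}) collapses to $E_n^*(\dd)=2\dst{n}$; this sharpens the estimate to $\Lambda_N\le 4D_N^*$ and shows that, for this diffusivity, the single hypothesis (\ref{mid-point-1-hyp}) already suffices.

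Regarding where the difficulty lies: the algebra is short, and the genuine content is the structural observation that the cumulative error $\lambda_n$ is a width\=/weighted sum of the pointwise errors $E_j^*(\dd)$. The only place demanding care is the bookkeeping of signs in the increment and the verification that no $1/h$\=/type amplification occurs --- the very danger flagged in Section \ref{SECTION-elementary} for the classical interior formula. Here it is defused automatically, since the weights $\mu_{j+1/2}-\mu_{j-1/2}$ sum to $\mu_{n+1/2}+1\le2$, so accumulating $N$ cell errors does not degrade the $O(N^{-2})$ order.
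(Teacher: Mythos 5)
Your proof is correct, and its execution is genuinely different from the paper's. The paper reads the recurrence (\ref{alpha-DEF-1})--(\ref{alpha-DEF-2}) as a one\-/step ODE method: it casts $\alpha_{n+1/2}$ as the numerical solution of the initial value problem $y'=-2x$, $y(-1)=0$, whose exact solution is $\dd$, views the evaluation at $\mu_n$ as an $O(D_N^*)$ perturbation of the midpoint rule, and then appeals to second\-/order convergence of that perturbed one\-/step scheme, with the details deferred to \cite{LPSE23}. You instead make the error accumulation explicit inside the paper itself: the identity $\lambda_n-\lambda_{n-1}=-(\mu_{n+1/2}-\mu_{n-1/2})E_n^*(\dd)$, the telescoping from $\lambda_0=0$, and the fact that the cell widths sum to $\mu_{n+1/2}+1\le 2$ reduce everything to $\Lambda_N\le 2\max_{1\le j\le N}|E_j^*(\dd)|$, which Lemma \ref{mother-formula-1} controls under exactly the stated hypotheses. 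Since the right\-/hand side of the IVP does not depend on $y$, the global\-/error argument the paper invokes \emph{is} this telescoping of local errors, so both proofs rest on the same phenomenon (no $1/h$\-/type amplification can occur in pure quadrature); but yours is self\-/contained --- it replaces the external reference by the paper's own Lemma \ref{mother-formula-1} --- and it buys a sharper quantitative conclusion: for $\dd(\mu)=1-\mu^2$ the representation (\ref{Enstar}) collapses to $E_n^*(\dd)=2\dst{n}$, hence $\Lambda_N\le 4D_N^*$, which shows that for this diffusivity hypotheses (\ref{interlacing}) and (\ref{mid-point-1-hyp}) alone already suffice and (\ref{mesh-order-is-1}) is not needed. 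What the paper's formulation buys in exchange is immediacy of generalization: the ODE viewpoint applies verbatim to any sufficiently smooth diffusivity and situates the result in the standard framework of one\-/step methods, which is the perspective developed further in \cite{LPSE23}.
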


\begin{proof}
The initial value problem (IVP)
\begin{equation}
\left\{
\begin{array}{l}
y' = f(x),\ -1 < x < 1,\\
y(-1) = \eta \in \RE,
\end{array}
\right.
\end{equation}
with $f \in {\rm C}^2([-1,1])$, has got a unique solution $y \in {\rm C}^3([-1,1])$.

\par Since $\mu_{n+1/2} - \mu_{n-1/2} = 2 h_n^*$, the hypotheses (\ref{interlacing}) and (\ref{mesh-order-is-1}) guarantee that this IVP is solved with order $2$ of convergence by the numerical scheme
\begin{equation}
\left\{
\begin{array}{l}
y_0 = \eta,\\
y_n = y_{n-1} + 2 h_n^* f(\mu_{n-1/2} + h_n^*),\ n = 1,\dots,N,
\end{array}
\right.
\end{equation}
where $y_n$ represents an approximation of $y(\mu_{n+1/2})$.

\par The order $2$ of convergence is kept if we replace $f(\mu_{n-1/2} + h_n^*)$ by $f(\mu_{n-1/2} + h_{n-}^*) = f(\mu_n)$, because
\begin{equation}
\max_{1 \leq n \leq N}|h_n^* - h_{n-}^*| = \max_{1 \leq n \leq N} |d_n^*| = D_N^* = O(N^{-q})\ \mbox{with}\ q \geq 2
\end{equation}
in virtue of the hypothesis (\ref{mid-point-1-hyp}).

\par If we solve with the adapted scheme
\begin{equation}
\left\{
\begin{array}{l}
y_0 = \eta,\\
y_n = y_{n-1} + 2 h_n^* f(\mu_n),\ n = 1,\dots,N,
\end{array}
\right.
\end{equation}
the IVP determined by the data $f(x) = -2x$ and $\eta = 0$, the solution of which is $y(x) = \dd(x) = 1 - x^2$, we find that $y_n = \alpha_{n+1/2}$ for all $n = 0,\dots,N$, and the proof is done. Details are given in \cite{LPSE23}.
\end{proof}

\subsection{Example: Morel's scheme}
Morel's scheme, in FR mode, is the scheme of type II obtained when:
\begin{itemize}
\item Nodes and points are the same than in the Haldy\=/Ligou's scheme, that is, $\{\mu_n\}_{n=1}^N$ are the GL nodes, and the points $\{\mu_{n+1/2}\}_{n=0}^N$ are those defined by $\mu_{1/2} = -1$, $\mu_{n+1/2} = \mu_{n - 1/2} + w_n$ for $n = 1,\dots,N$, being $\{w_n\}_{n=1}^N$ the GL weights.
\item $\{\alpha_{n+1/2}\}_{n=0}^N$ are the values defined by $\alpha_{1/2} = 0$, $\alpha_{n+1/2} = \alpha_{n-1/2} - 2 \mu_n w_n$ for $n = 1,\dots,N$. Since
\begin{equation}
\sum_{n=1}^{N} w_n \mu_n = \int_{-1}^1 \mu\ d\mu = 0,
\end{equation}
it turns out that these values are symmetric with respect to $0$:
\begin{equation}\label{alpha-symmetric}
\alpha_{n+1/2} = \alpha_{N-n+1/2}\ \mbox{for}\ n = 0,\dots,N.
\end{equation}
In particular, $\alpha_{N+1/2} = 0$.
\end{itemize}

\par So this scheme reads as follows:
\begin{eqnarray}
\nonumber & & \Delta_{{\rm FP},N} f(\mu_n) = \frac{\alpha_{n+1/2} \frac{f(\mu_{n+1}) - f(\mu_n)}{\mu_{n+1} - \mu_n} - \alpha_{n-1/2} \frac{f(\mu_n) - f(\mu_{n-1})}{\mu_n - \mu_{n-1}}}{w_n}\\
\label{Morel-scheme} & & \mbox{for}\ n = 1,\dots,N.
\end{eqnarray}

\par Morel introduced it in \cite{MO85} expressly so that the discrete zeroth and first moment properties were fulfilled. References \cite{KITR08}, \cite{OLFR10}, \cite{PWP20}, and \cite{WAPR12} provide examples of its application.

\par As seen in Equation (\ref{Morel-scheme}), the points $\mu_{n+1/2}$ are not needed for describing this scheme, and in fact Morel did not mention them at all in \cite{MO85}. However, it is not possible to carry out a convergence analysis without taking these points into account.

\par Both the discrete zeroth and first moment properties are still satisfied when it is used in HR mode, but this is completely irrelevant, since, as it happens to Haldy\=/Ligou's, this scheme in HR mode does not converge.

\begin{remark}[other choices of $\alpha_{n+1/2}$ can be made]
When compared to having order $2$, having exactness on $\PO_1$ is not that important (schemes of type I are not exact on $\PO_1$ unless $\mu_n$ be the mid\=/point of the cell $[\mu_{n-1/2},\mu_{n+1/2}]$), but presenting the problem of calculating $\alpha_{n+1/2}$ from the exactness on $\PO_1$ has two advantages: on the one hand, it serves as a mnemonic rule to remember how to calculate $\alpha_{n+1/2}$ even when a diffusivity other than $\dd$ is used; on the other, it coincides with the choice that Morel made, following a different path, for his scheme.

\par Having said that, and observing the proof of Theorem \ref{alpha-approximates-D}, we could modify the values of $\alpha_{n+1/2}$ simply by using a different numerical method from the one used in this proof. To have an instance, let us suppose that $\{\mu_n\}_{n=1}^N$ and $\{\mu_{n+1/2}\}_{n=0}^N$ are those of Morel's scheme. Then, the choice
\begin{align}
\label{alpha-RK4-DEF-1} & \alpha_{1/2} = 0,\\
\label{alpha-RK4-DEF-2} & \alpha_{n+1/2} = \alpha_{n-1/2} - w_n (2\mu_{n-1/2} + w_n)\ \mbox{for}\ n = 1,\dots,N,
\end{align}
which results from solving the IVP in the proof of Theorem \ref{alpha-approximates-D} with the classic Runge\=/Kutta method of fourth order, provides us with values satisfying condition (\ref{alpha-symmetric}) and with a convergent method of experimental order $2$. However, the discrete first moment property ceases to be met.
\end{remark}

\subsection{Analysis of convergence}
We are going to analyze schemes of type II only for $\dd(\mu) = 1 - \mu^2$ and restricting ourselves to the case in which the numbers $\{\alpha_{n+1/2}\}_{n=0}^N$ are given by (\ref{alpha-DEF-1})--(\ref{alpha-DEF-2}). Therefore, we can use Theorem \ref{alpha-approximates-D}. Thanks to the fact that we have already analyzed the convergence of schemes of type I, the task ahead will not be so complicated.

\par Let us start with a useful lemma.
\begin{lemma}
\label{lemma-lambdan}
If $\{\lambda_n\}_{n=0}^N$ is given by Definition \ref{DEFlambdan}, then, for $n = 1,\dots,N$,
\begin{equation}
\lambda_n = \lambda_{n-1} - 2 d_n^* (\mu_{n+1/2} - \mu_{n-1/2}).
\end{equation}
\end{lemma}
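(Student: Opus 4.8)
The plan is to prove the recurrence for $\lambda_n$ by starting from its definition $\lambda_n = \dd(\mu_{n+1/2}) - \alpha_{n+1/2}$ and rewriting both the $\dd$-term and the $\alpha$-term in terms of their values at index $n-1$. For the $\alpha$-term this is immediate, since the defining recurrence (\ref{alpha-DEF-2}) gives $\alpha_{n+1/2} = \alpha_{n-1/2} - 2\mu_n(\mu_{n+1/2} - \mu_{n-1/2})$. For the $\dd$-term, the idea is to expand $\dd(\mu_{n+1/2}) - \dd(\mu_{n-1/2})$ exactly; since $\dd(\mu) = 1-\mu^2$ is a quadratic, this difference is a clean algebraic identity rather than a Taylor approximation.

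Concretely, I would write
\begin{equation}
\lambda_n - \lambda_{n-1} = \bigl[\dd(\mu_{n+1/2}) - \dd(\mu_{n-1/2})\bigr] - \bigl[\alpha_{n+1/2} - \alpha_{n-1/2}\bigr],
\end{equation}
substitute $\alpha_{n+1/2} - \alpha_{n-1/2} = -2\mu_n(\mu_{n+1/2} - \mu_{n-1/2})$ from (\ref{alpha-DEF-2}), and compute the diffusivity difference directly:
\begin{equation}
\dd(\mu_{n+1/2}) - \dd(\mu_{n-1/2}) = \mu_{n-1/2}^2 - \mu_{n+1/2}^2 = -(\mu_{n+1/2} + \mu_{n-1/2})(\mu_{n+1/2} - \mu_{n-1/2}).
\end{equation}

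The key algebraic step is then to combine these two pieces. Factoring out the common factor $(\mu_{n+1/2} - \mu_{n-1/2})$ yields
\begin{equation}
\lambda_n - \lambda_{n-1} = \bigl[-(\mu_{n+1/2} + \mu_{n-1/2}) + 2\mu_n\bigr](\mu_{n+1/2} - \mu_{n-1/2}).
\end{equation}
Now I would invoke the definition of the secondary node, Definition \ref{def-secondary}, namely $\hmu_n = (\mu_{n-1/2} + \mu_{n+1/2})/2$, together with $\dst{n} = \hmu_n - \mu_n$ from Definition \ref{defi-h-and-d-star}, to recognize that $\mu_{n-1/2} + \mu_{n+1/2} = 2\hmu_n$, so that the bracket becomes $-2\hmu_n + 2\mu_n = -2(\hmu_n - \mu_n) = -2\dst{n}$. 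Substituting this gives exactly $\lambda_n - \lambda_{n-1} = -2\dst{n}(\mu_{n+1/2} - \mu_{n-1/2})$, which rearranges to the claimed formula.

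I do not expect any genuine obstacle here: the whole argument is a short computation that hinges on the fact that $\dd$ is a degree-two polynomial (so its second difference telescopes exactly) and on the algebraic identity relating $\dst{n}$ to the midpoint of the cell $[\mu_{n-1/2}, \mu_{n+1/2}]$. The only point requiring a little care is matching the sign and the factor of $2$ in the definition of $\dst{n}$, and ensuring the factor $\mu_{n+1/2} - \mu_{n-1/2}$ (which equals $2h_n^\star$) is carried along rather than cancelled. No regularity hypotheses on $f$ or limiting arguments are needed, since the statement is a purely algebraic relation among the geometric quantities of the mesh.
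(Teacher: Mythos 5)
Your proposal is correct and takes essentially the same approach as the paper: the paper organizes the computation as a chain of equivalences reducing the claimed identity to the defining recurrence (\ref{alpha-DEF-2}), whereas you run the identical algebra forward by expanding $\lambda_n - \lambda_{n-1}$, using the exact factorization of $\dd(\mu_{n+1/2}) - \dd(\mu_{n-1/2})$ and the relation $2\dst{n} = \mu_{n-1/2} + \mu_{n+1/2} - 2\mu_n$. The two arguments use precisely the same ingredients and differ only in direction of presentation.
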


\begin{proof}
By the definitions of $d_n^*$ and $\hmu_n$,
\begin{equation}
2 d_n^* = 2 \hmu_n - 2 \mu_n = \mu_{n-1/2} + \mu_{n+1/2} - 2 \mu_n.
\end{equation}

\par So,
\begin{multline}
\lambda_n = \lambda_{n-1} - 2 d_n^* (\mu_{n+1/2} - \mu_{n-1/2})\\
\Leftrightarrow \lambda_n = \lambda_{n-1} + \{2 \mu_n - (\mu_{n-1/2} + \mu_{n+1/2})\} (\mu_{n+1/2} - \mu_{n-1/2})\\
\Leftrightarrow \lambda_n = \lambda_{n-1} + 2 \mu_n (\mu_{n+1/2} - \mu_{n-1/2}) - \{(\mu_{n+1/2})^2 - (\mu_{n-1/2})^2\}.
\end{multline}

\par Noticing now that $(\mu_{n+1/2})^2 - (\mu_{n-1/2})^2 = \dd(\mu_{n-1/2}) - \dd(\mu_{n+1/2})$, one has
\begin{multline}
\lambda_n = \lambda_{n-1} - 2 d_n^* (\mu_{n+1/2} - \mu_{n-1/2})\\
\Leftrightarrow \dd(\mu_{n+1/2}) - \alpha_{n+1/2} = \dd(\mu_{n-1/2}) - \alpha_{n-1/2} + 2 \mu_n (\mu_{n+1/2} - \mu_{n-1/2})\\
- \dd(\mu_{n-1/2}) + \dd(\mu_{n+1/2}) \Leftrightarrow \alpha_{n+1/2} = \alpha_{n-1/2} - 2 \mu_n (\mu_{n+1/2} - \mu_{n-1/2}),
\end{multline}
which ends the proof, as the last equality is known to be true.
\end{proof}

\par The basic idea in this section is to use Lemma \ref{lemma-lambdan} to recast the scheme as a perturbation of a scheme of type I. Having done that, Theorem \ref{theorem-type-I} solves much of the problem.

\begin{prop}[error representation for schemes of type II]
\label{ER-type-II} Suppose that $f \in {\rm C}^2([-1,1])$ and that $\Delta_{{\rm FP},N} f(\mu_n)$ is defined by Equations (\ref{Type-II-scheme-a})--(\ref{Type-II-scheme-c}), with $\{\alpha_{n+1/2}\}_{n=0}^N$ given by Equations (\ref{alpha-DEF-1})--(\ref{alpha-DEF-2}). Then, for $n = 1,\dots,N$,
\begin{equation}
\Delta_{{\rm FP},N} f(\mu_n) = \Delta_{\rm FP} f(\mu_n) - \{R_n(f) + R_n^*(f)\},
\end{equation}
with $R_n(f)$ defined by Equation (\ref{DEFRn}) and
\begin{align}
\label{R1star} R_1^*(f) & = 2 d_1^* \{E_1(f) - f'(\mu_{1+1/2})\},\\
\nonumber R_n^*(f) & = \frac{\lambda_n}{2 h_n^*} \{ E_{n-1}(f) - E_n(f) + f'(\mu_{n+1/2}) - f'(\mu_{n-1/2})\}\\
\label{Rnstar} & \hspace{7mm} + 2 d_n^* \{E_{n-1}(f) - f'(\mu_{n-1/2})\}\ \mbox{for}\ n = 2,\dots, N-1,\\
\label{RNstar} R_N^*(f) & = \left( 2 d_N^* - \frac{\alpha_{N+1/2}}{1 - \mu_{N-1/2}} \right) \{E_{N-1}(f) - f'(\mu_{N-1/2})\}.
\end{align}
\par In the expressions above, $E_n(f)$ is that defined by Equation (\ref{EnDEF}).
\end{prop}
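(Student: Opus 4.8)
The plan is to carry out exactly the strategy announced before the statement: use Lemma \ref{lemma-lambdan} to recast the type II scheme as the corresponding type I scheme plus a correction controlled by the quantities $\lambda_n$, and then feed the result into the already-established error representation of Proposition \ref{ER-type-I}. Throughout I would abbreviate $2\hst{n} = \mu_{n+1/2} - \mu_{n-1/2}$, denote by $T_n$ the type I approximation (\ref{Type-I-scheme}) built on the same nodes and points, and keep in mind the definition (\ref{EnDEF}) of $E_n(f)$, which lets me rewrite each divided difference as
\begin{equation}
\frac{f(\mu_{n+1}) - f(\mu_n)}{\mu_{n+1} - \mu_n} = f'(\mu_{n+1/2}) - E_n(f).
\end{equation}
Since Proposition \ref{ER-type-I} gives $T_n = \Delta_{\rm FP} f(\mu_n) - R_n(f)$, it suffices to show that the type II scheme equals $T_n - R_n^*(f)$; the claimed identity then follows at once.

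For the generic interior index $n \in \{2,\dots,N-1\}$ I would substitute $\alpha_{n+1/2} = \dd(\mu_{n+1/2}) - \lambda_n$ and $\alpha_{n-1/2} = \dd(\mu_{n-1/2}) - \lambda_{n-1}$ (Definition \ref{DEFlambdan}) into (\ref{Type-II-scheme-b}) and split off the type I part, obtaining
\begin{equation}
\Delta_{{\rm FP},N} f(\mu_n) = T_n - \frac{\lambda_n \frac{f(\mu_{n+1}) - f(\mu_n)}{\mu_{n+1} - \mu_n} - \lambda_{n-1} \frac{f(\mu_n) - f(\mu_{n-1})}{\mu_n - \mu_{n-1}}}{2\hst{n}}.
\end{equation}
Rewriting the two divided differences via (\ref{EnDEF}) and then eliminating $\lambda_{n-1}$ through Lemma \ref{lemma-lambdan} in the form $\lambda_{n-1} = \lambda_n + 2\dst{n}(\mu_{n+1/2}-\mu_{n-1/2})$, the correction term separates into a $\lambda_n/(2\hst{n})$ piece and a $2\dst{n}$ piece, which is precisely the two-term expression (\ref{Rnstar}) for $R_n^*(f)$. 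This is the computational heart of the proof and the step where the bookkeeping must be done with care, since the correct grouping of the terms $E_{n-1}(f)$, $E_n(f)$, $f'(\mu_{n-1/2})$ and $f'(\mu_{n+1/2})$ is what makes $R_n^*(f)$ appear in exactly the stated form.

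The boundary indices $n = 1$ and $n = N$ are handled by the same substitution applied to the single-term schemes (\ref{Type-II-scheme-a}) and (\ref{Type-II-scheme-c}), using two special values of $\lambda$. For $n = 1$ one has $\lambda_0 = \dd(\mu_{1/2}) - \alpha_{1/2} = 0$, so Lemma \ref{lemma-lambdan} yields $\lambda_1 = -2\dst{1}(\mu_{1+1/2}+1)$; dividing by $\mu_{1+1/2}+1 = 2\hst{1}$ reproduces (\ref{R1star}). For $n = N$ one uses that $\dd(\mu_{N+1/2}) = \dd(1) = 0$, whence $\lambda_N = -\alpha_{N+1/2}$ and, by the lemma, $\lambda_{N-1} = -\alpha_{N+1/2} + 2\dst{N}(1-\mu_{N-1/2})$; substituting this and dividing by $1 - \mu_{N-1/2} = 2\hst{N}$ gives exactly the coefficient $2\dst{N} - \alpha_{N+1/2}/(1-\mu_{N-1/2})$ of (\ref{RNstar}). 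The main obstacle is thus not conceptual but algebraic: keeping the signs straight in the two boundary cases (in particular the appearance of $\alpha_{N+1/2}$, which need not vanish) and verifying that each collapse of the correction matches the prescribed formula for $R_n^*(f)$ term by term.
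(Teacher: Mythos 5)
Your proposal is correct and follows essentially the same route as the paper's own proof: substituting $\alpha_{n\pm 1/2} = \dd(\mu_{n\pm 1/2}) - \lambda_{n,n-1}$ via Definition \ref{DEFlambdan}, splitting off the type I part, eliminating $\lambda_{n-1}$ (and computing $\lambda_1$, $\lambda_{N-1}$ at the boundaries via $\lambda_0 = 0$ and $\lambda_N = -\alpha_{N+1/2}$) through Lemma \ref{lemma-lambdan}, and rewriting the divided differences with (\ref{EnDEF}). The signs and groupings in all three cases check out, so nothing is missing.
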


\begin{proof}
Let us distinguish the three possible cases.
\begin{itemize}
\item Case $n = 1$: use Definition \ref{DEFlambdan} and Lemma \ref{lemma-lambdan} to see that
\begin{equation}
\alpha_{1+1/2} = \dd(\mu_{1+1/2}) - \lambda_1 = \dd(\mu_{1+1/2}) + 2 d_1^* (\mu_{1+1/2} + 1).
\end{equation}

\par Then,
\begin{multline}
\Delta_{{\rm FP},N} f(\mu_1) = \frac{\alpha_{1+1/2} \frac{f(\mu_2) - f(\mu_1)}{\mu_2 - \mu_1}}{\mu_{1+1/2} + 1}\\
= \frac{\dd(\mu_{1+1/2}) \frac{f(\mu_2) - f(\mu_1)}{\mu_2 - \mu_1}}{\mu_{1+1/2} + 1} + 2 d_1^* \frac{f(\mu_2) - f(\mu_1)}{\mu_2 - \mu_1}\\
= \frac{\dd(\mu_{1+1/2}) \frac{f(\mu_2) - f(\mu_1)}{\mu_2 - \mu_1}}{\mu_{1+1/2} + 1} + 2 d_1^* \{f'(\mu_{1+1/2}) - E_1(f)\}.
\end{multline}

\par Lemma \ref{mother-formula-2} has been used in the last step above.

\par In other words,
\begin{equation}
\Delta_{{\rm FP},N} f(\mu_1) = \Delta_{\rm FP} f(\mu_1) - \{R_1(f) + R_1^*(f)\},
\end{equation}
with $R_1^*(f)$ defined by (\ref{R1star}).

\item Case $n \in \{2,\dots,N-1\}$: since, by Definition \ref{DEFlambdan},
\begin{equation}
\alpha_{n+1/2} = \dd(\mu_{n+1/2}) - \lambda_n\ \mbox{and}\ \alpha_{n-1/2} = \dd(\mu_{n-1/2}) - \lambda_{n-1},
\end{equation}
we have
\begin{multline}
\Delta_{{\rm FP},N} f(\mu_n) = \frac{\alpha_{n+1/2} \frac{f(\mu_{n+1}) - f(\mu_n)}{\mu_{n+1} - \mu_n} - \alpha_{n-1/2} \frac{f(\mu_n) - f(\mu_{n-1})}{\mu_n - \mu_{n-1}}}{\mu_{n+1/2} - \mu_{n-1/2}}\\
= \frac{\dd(\mu_{n+1/2}) \frac{f(\mu_{n+1}) - f(\mu_n)}{\mu_{n+1} - \mu_n} - \dd(\mu_{n-1/2}) \frac{f(\mu_n) - f(\mu_{n-1})}{\mu_n - \mu_{n-1}}}{\mu_{n+1/2} - \mu_{n-1/2}}\\
- \frac{\lambda_n \frac{f(\mu_{n+1}) - f(\mu_n)}{\mu_{n+1} - \mu_n} - \lambda_{n-1} \frac{f(\mu_n) - f(\mu_{n-1})}{\mu_n - \mu_{n-1}}}{\mu_{n+1/2} - \mu_{n-1/2}}\\
= \Delta_{\rm FP} f(\mu_n) - R_n(f) - \frac{\lambda_n \frac{f(\mu_{n+1}) - f(\mu_n)}{\mu_{n+1} - \mu_n} - \lambda_{n-1} \frac{f(\mu_n) - f(\mu_{n-1})}{\mu_n - \mu_{n-1}}}{\mu_{n+1/2} - \mu_{n-1/2}}.
\end{multline}
\par Now use $\lambda_{n-1} = \lambda_n + 2 d_n^* (\mu_{n+1/2} - \mu_{n-1/2})$, which holds by Lemma \ref{lemma-lambdan}, $\mu_{n+1/2} - \mu_{n-1/2} = 2 h_n^*$, and finally Lemma \ref{mother-formula-2} to get
\begin{equation}
\Delta_{{\rm FP},N} f(\mu_n) = \Delta_{\rm FP} f(\mu_1) - \{R_n(f) + R_n^*(f)\},
\end{equation}
with $R_n^*(f)$ defined by (\ref{Rnstar}).

\item Case $n = N$: Equation (\ref{DEFRn}) implies
\begin{equation}
\frac{-\dd(\mu_{N-1/2}) \frac{f(\mu_N) - f(\mu_{N-1})}{\mu_N - \mu_{N-1}}}{1 - \mu_{N-1/2}} = \Delta_{\rm FP} f(\mu_N) - R_N(f),
\end{equation}
which, used in combination with the three identities
\begin{equation}
\Delta_{{\rm FP},N} f(\mu_N) = \frac{-\alpha_{N-1/2} \frac{f(\mu_N) - f(\mu_{N-1})}{\mu_N - \mu_{N-1}}}{1 - \mu_{N-1/2}},
\end{equation}
\begin{multline}
\alpha_{N-1/2} = \dd(\mu_{N-1/2}) - \lambda_{N-1}\\
= \dd(\mu_{N-1/2}) - \{\lambda_N + 2d_N^*(1 - \mu_{N-1/2})\},
\end{multline}
and
\begin{equation}
\lambda_N = \dd(\mu_{N+1/2}) - \alpha_{N+1/2} = - \alpha_{N+1/2},
\end{equation}
ends the proof of this case.
\end{itemize}
\end{proof}

\par To properly understand the notation used in the following definition, recall that $\lambda_n$ depends on $\dd$.

\begin{defi}
$\beta_N^*(\dd)$ is the number defined by
\begin{equation}
\beta_N^*(\dd) = \max_{2 \leq n \leq N-1} \left| \frac{(d_{n-1} - d_n) \lambda_n}{\mu_{n+1/2} - \mu_{n-1/2}} \right|
\end{equation}
or, equivalently,
\begin{equation}
\beta_N^*(\dd) = \max_{2 \leq n \leq N-1} \left| \frac{(d_{n-1} - d_n) \lambda_n}{2 h_n^*} \right|.
\end{equation}
\end{defi}

\begin{defi}
$X_N = |\alpha_{N+1/2}/(1 - \mu_{N-1/2})|$.
\end{defi}

\par We are now in a position to prove the main result in this section.

\begin{thm}[order $2$ of convergence for schemes of type II]\label{theorem-type-II}
Suppose that
\begin{align}
\label{interlacing-thm-II}
& -1 = \mu_{1/2} < \mu_1 < \mu_{1+1/2} < \cdots < \mu_{N-1/2} < \mu_N < \mu_{N+1/2} = 1,\\
\label{mesh-order-is-1-thm-II} & \tiM_N = O(N^{-1}),\\
\label{mid-point-1-hyp-thm-II} & D_N^\star = O(N^{-q})\ \mbox{with}\ q \geq 2,\\
\label{mid-point-2-hyp-thm-II} & D_N = O(N^{-r})\ \mbox{with}\ r \geq 2,\\
\label{hyp-min-points-thm-II} & \frac{1}{m_N^*} = O(N^s)\ \mbox{with}\ 1 \leq s  \leq 4m - 2,\ \mbox{where}\ m = \min\{q,r\},\\
\label{hyp-betaN-thm-II} & \beta_N(\dd) = O(N^{-t})\ \mbox{with}\ t \geq 2,\\
\label{new-hyp-1} & \beta_N^*(\dd) = O(N^{-u})\ \mbox{with}\ u \geq 2,\\
\label{new-hyp-2} & X_N = O(N^{-v})\ \mbox{with}\ v \geq 2.
\end{align}

\par Then, the scheme (\ref{Type-II-scheme-a})--(\ref{Type-II-scheme-c}), with $\{\alpha_{n+1/2}\}_{n=0}^N$ given by (\ref{alpha-DEF-1})--(\ref{alpha-DEF-2}), converges with order $2$ for any function $f$ of class ${\rm C}^5([-1,1])$, and the same is true if [$D_N = D_N^* = 0$ and the hypotheses (\ref{interlacing-thm-II}), (\ref{mesh-order-is-1-thm-II}) and (\ref{new-hyp-2}) hold], if
[$d_1 = \cdots = d_{N-1}$ and the hypotheses (\ref{interlacing-thm-II})--(\ref{hyp-min-points-thm-II}) and (\ref{new-hyp-2}) hold] or if [$\Lambda_N = 0$ and the hypotheses (\ref{interlacing-thm-II})--(\ref{hyp-betaN-thm-II}) hold].

\par Furthermore, if $s \leq r$, where $r$ and $s$ are those in (\ref{mid-point-2-hyp-thm-II}) and (\ref{hyp-min-points-thm-II}), the hypothesis (\ref{new-hyp-1}) can be ignored, since it will be automatically fulfilled as a consequence of the others.
\end{thm}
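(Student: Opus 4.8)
The plan is to lean entirely on the error representation of Proposition~\ref{ER-type-II}, which expresses the truncation error of the type~II scheme as $R_n(f)+R_n^*(f)$, where $R_n(f)$ is \emph{exactly} the truncation error of the companion type~I scheme. Since hypotheses (\ref{interlacing-thm-II})--(\ref{hyp-betaN-thm-II}) are word for word the hypotheses of Theorem~\ref{theorem-type-I}, that theorem already delivers $\max_{1\le n\le N}|R_n(f)|=O(N^{-2})$ for $f\in{\rm C}^5([-1,1])$. The whole task therefore reduces to proving $\max_{1\le n\le N}|R_n^*(f)|=O(N^{-2})$, and this is exactly where the two genuinely new hypotheses (\ref{new-hyp-1}) and (\ref{new-hyp-2}) are consumed. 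Throughout I will use that $\Lambda_N=O(N^{-2})$ by Theorem~\ref{alpha-approximates-D} (its hypotheses (\ref{interlacing}), (\ref{mesh-order-is-1}), (\ref{mid-point-1-hyp}) are available as (\ref{interlacing-thm-II}), (\ref{mesh-order-is-1-thm-II}), (\ref{mid-point-1-hyp-thm-II})), that $\max_n|E_n(f)|=O(N^{-2})$ by Lemma~\ref{mother-formula-2}(A), and that $\|f'\|_\infty<\infty$.

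First I would dispose of the boundary terms and of the $d_n^*$-contributions, which are elementary. In $R_1^*(f)$ from (\ref{R1star}) the factor $2d_1^*$ is $O(D_N^*)=O(N^{-q})$ with $q\ge 2$ by (\ref{mid-point-1-hyp-thm-II}), while the bracket $E_1(f)-f'(\mu_{1+1/2})$ is $O(1)$; hence $R_1^*(f)=O(N^{-2})$. The same estimate applies verbatim to the summand $2d_n^*\{E_{n-1}(f)-f'(\mu_{n-1/2})\}$ of the interior $R_n^*(f)$ in (\ref{Rnstar}). For $R_N^*(f)$ in (\ref{RNstar}) the only extra ingredient is the coefficient $\alpha_{N+1/2}/(1-\mu_{N-1/2})$, whose modulus is $X_N=O(N^{-v})$ with $v\ge 2$ by (\ref{new-hyp-2}); combined with $|2d_N^*|=O(N^{-q})$ and the $O(1)$ bracket, this gives $R_N^*(f)=O(N^{-2})$.

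The crux, and the main obstacle, is the remaining interior summand $\frac{\lambda_n}{2h_n^*}\{E_{n-1}(f)-E_n(f)+f'(\mu_{n+1/2})-f'(\mu_{n-1/2})\}$, and this is where (\ref{new-hyp-1}) enters. The decisive observation is that, by the very definition (\ref{EnDEF}) of $E_n$, the first-derivative terms cancel and the bracket collapses to the \emph{exact} second difference
\[
\frac{f(\mu_{n+1})-f(\mu_n)}{\mu_{n+1}-\mu_n}-\frac{f(\mu_n)-f(\mu_{n-1})}{\mu_n-\mu_{n-1}}=(\mu_{n+1}-\mu_{n-1})\,f[\mu_{n-1},\mu_n,\mu_{n+1}].
\]
By the mean value theorem for divided differences (valid already for $f\in{\rm C}^2$), $f[\mu_{n-1},\mu_n,\mu_{n+1}]=\tfrac12 f''(\zeta_n)$ for some $\zeta_n$, and since the relations of Section~\ref{SECTION-underlying} ($\hnp=h_n+d_n$, $h_{(n-1)-}=h_{n-1}-d_{n-1}$, together with $\mu_{n+1}-\mu_{n-1}=h_{n+}+2h_n^*+h_{(n-1)-}$) give $\mu_{n+1}-\mu_{n-1}=4h_n^*-2(d_{n-1}-d_n)$, the whole summand equals
\[
\lambda_n f''(\zeta_n)-\frac{(d_{n-1}-d_n)\lambda_n}{2h_n^*}\,f''(\zeta_n).
\]
The first piece is bounded by $\Lambda_N\|f''\|_\infty=O(N^{-2})$, the second by $\beta_N^*(\dd)\|f''\|_\infty=O(N^{-u})$ with $u\ge 2$ by (\ref{new-hyp-1}); this is precisely the purpose of $\beta_N^*(\dd)$. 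The obstacle worth flagging is that a naive estimate $|\lambda_n/(2h_n^*)|\le\Lambda_N/m_N^*=O(N^{s-2})$ is useless, since $s$ may be as large as $4m-2\ge 6$; multiplying an $O(N^{-1})$ second difference (or, worse, carrying Taylor remainders of order five) by such a factor would not yield $O(N^{-2})$. Collapsing the bracket to a single $f''(\zeta_n)$ via the divided-difference mean value theorem, so that the factor $2h_n^*$ inside $\mu_{n+1}-\mu_{n-1}$ cancels the dangerous denominator and leaves only the $\beta_N^*$-controlled remnant, is exactly what circumvents this. Assembling the three estimates yields $\max_n|R_n^*(f)|=O(N^{-2})$, hence convergence of order $2$.

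Finally, the special cases and the closing remark follow by inspection. If $\Lambda_N=0$ then all $\lambda_n=0$, the scheme coincides with its type~I companion, and $\alpha_{N+1/2}=\dd(1)=0$, so $R_n^*(f)$ reduces to its $d_n^*$-terms, which are $O(N^{-q})=O(N^{-2})$; thus hypotheses (\ref{interlacing-thm-II})--(\ref{hyp-betaN-thm-II}) suffice. If $D_N=D_N^*=0$, then every $d_n^*=0$ forces $\lambda_n\equiv0$ by Lemma~\ref{lemma-lambdan}, so $R_n^*(f)\equiv0$ and the matching type~I special case gives order~$2$ from (\ref{interlacing-thm-II})--(\ref{mesh-order-is-1-thm-II}) alone. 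If $d_1=\dots=d_{N-1}$, then $d_{n-1}-d_n\equiv0$ makes $\beta_N(\dd)=\beta_N^*(\dd)=0$ (so (\ref{hyp-betaN-thm-II}) and (\ref{new-hyp-1}) drop out), the hard summand reduces to $\lambda_n f''(\zeta_n)=O(N^{-2})$, and only (\ref{new-hyp-2}) is still required for $R_N^*(f)$. As for the closing statement, when $s\le r$ one estimates directly
\[
\beta_N^*(\dd)\le \frac{2D_N\,\Lambda_N}{m_N^*}=O(N^{-r})\,O(N^{-2})\,O(N^{s})=O(N^{\,s-r-2})=O(N^{-2}),
\]
so (\ref{new-hyp-1}) holds automatically with $u\ge 2$.
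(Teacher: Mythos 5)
Your proof is correct, and it reaches the decisive interior estimate by a genuinely different route from the paper. The paper keeps the two pieces of the bracket in (\ref{Rnstar}) separate: it kills $f'(\mu_{n+1/2})-f'(\mu_{n-1/2})$ with the mean value theorem (yielding $\lambda_n f''(c_n)$, hence $O(\Lambda_N)$), and then bounds $\tilde{\varepsilon}_n(f)=\frac{\lambda_n}{2h_n^*}\{E_{n-1}(f)-E_n(f)\}$ by re-running the entire machinery of Proposition \ref{BOUND-for-epsn} with $\dd\equiv1$, i.e.\ the fifth-order representation (\ref{Enclass5}) plus the Taylor-coefficient differencing, obtaining four bounds in which $\beta_N^*(\dd)$ and $\Lambda_N$ play the roles that $\beta_N(\dd)$ and $\dd$ played there. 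You instead observe that, by (\ref{EnDEF}), the derivative terms in the bracket cancel exactly, so it collapses to $(\mu_{n+1}-\mu_{n-1})\,f[\mu_{n-1},\mu_n,\mu_{n+1}]$; the mean value theorem for divided differences and the identity $\mu_{n+1}-\mu_{n-1}=4h_n^*-2(d_{n-1}-d_n)$ (a rewriting of (\ref{odd})) then produce exactly $\lambda_n f''(\zeta_n)-\frac{(d_{n-1}-d_n)\lambda_n}{2h_n^*}f''(\zeta_n)$, bounded by $\bigl(\Lambda_N+\beta_N^*(\dd)\bigr)\|f''\|_\infty=O(N^{-2})$; the cancellation of $2h_n^*$ against the dangerous denominator is the same phenomenon the paper engineers term by term, but you get it in one stroke. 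The hypotheses (\ref{new-hyp-1}) and (\ref{new-hyp-2}) are consumed at the same places in both arguments, and your treatment of $R_1^*$, $R_N^*$, the $2d_n^*$ terms, the three special cases, and the $s\le r$ remark coincides with the paper's (the bound $\beta_N^*(\dd)\le 2D_N\Lambda_N/m_N^*$ you use is the paper's own). What your shortcut buys: the $R_n^*$ analysis needs only $f\in{\rm C}^2$, so the ${\rm C}^5$ hypothesis is consumed exclusively by Theorem \ref{theorem-type-I} applied to $R_n(f)$, and the heavy Taylor bookkeeping is done once rather than twice. What the paper's longer route buys: explicit four-term bounds for $\tilde{\varepsilon}_n(f)$ parallel to (\ref{boundeps1n})--(\ref{boundeps4n}), with separately visible decay rates for each contribution, which is what makes quantities such as the $u$ column of Table \ref{tableMorelFR} directly interpretable in the numerical experiments.
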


\begin{proof}
Let us start by noticing that $\Lambda_N = \max_{0\leq n \leq N} |\lambda_n| = O(N^{-2})$, in virtue of Theorem \ref{alpha-approximates-D}. Then,
\begin{equation}
\beta_N^*(\dd) \leq \frac{2 D_N \Lambda_N}{m_N^*} = O(N^{s-r-2}),
\end{equation}
and so the hypothesis (\ref{new-hyp-1}) will indeed be automatically fulfilled if $s \leq r$.

\par Now we will prove the main part of the theorem. Thanks to Proposition \ref{ER-type-II}, we only need to prove that $\max_{1 \leq n \leq N} |R_n^*(f)| = O(N^{-2})$, because we already know that $\max_{1 \leq n \leq N} |R_n(f)| = O(N^{-2})$ by Theorem \ref{theorem-type-I}.

\par Notice that
\begin{equation}
\max_{1 \leq n \leq N-1} |E_n(f)| = O(N^{-2})
\end{equation}
by Lemma \ref{mother-formula-2}.

\begin{itemize}
\item Bound for $|R_1^*(f)|$:
\begin{multline}
|R_1^*(f)| = |2 d_1^* \{E_1(f) - f'(\mu_{1+1/2})\}|\\
\leq 2 D_N^*\left\{\left(\max_{1 \leq n \leq N-1} |E_n(f)|\right) + \|f'\|_\infty)\right\} = O(N^{-q}).
\end{multline}

\item Bound for $\max_{2 \leq n \leq N-1}|R_n^*(f)|$: let us fix $n \in \{2,\dots,N-1\}$ and understand that $\beta_N^* = \beta_N^*(\dd)$. We know from Equation (\ref{Rnstar}) that
\begin{multline}
R_n^*(f) = \frac{\lambda_n}{2 h_n^*} \{ E_{n-1}(f) - E_n(f) + f'(\mu_{n+1/2}) - f'(\mu_{n-1/2})\}\\
+ 2 d_n^* \{E_{n-1}(f) - f'(\mu_{n-1/2})\}\ \mbox{for}\ n = 2,\dots, N-1.
\end{multline}

\par Two parts of the expression above can be easily bounded:
\begin{multline}
|2 d_n^* \{E_{n-1}(f) - f'(\mu_{n-1/2})\}|\\
\leq 2 D_N^*\left\{\left(\max_{1 \leq n \leq N-1} |E_n(f)|\right) + \|f'\|_\infty)\right\} = O(N^{-q})
\end{multline}
and
\begin{multline}
\left| \frac{\lambda_n}{2 h_n^*} \{ f'(\mu_{n+1/2}) - f'(\mu_{n-1/2})\} \right|\\
= \left| \frac{\lambda_n}{2 h_n^*} \{ f'(\mu_{n-1/2}) + 2 h_n^* f''(c_n) - f'(\mu_{n-1/2})\} \right|\\
= | \lambda_n f''(c_n) | \leq \Lambda_N \|f''\|_\infty = O(N^{-2}).
\end{multline}

\par Finding a bound for
\begin{equation}
\left| \frac{\lambda_n}{2 h_n^*} \{ E_{n-1}(f) - E_n(f) \} \right|
\end{equation}
is in principle more difficult, but, introducing the definition
\begin{equation}
\tep_n(f) = \frac{\lambda_n}{2 h_n^*} \{ E_{n-1}(f) - E_n(f) \},
\end{equation}
noting the resemblance of $\tep_n(f)$ to $\varepsilon_n(f)$ in Equation (\ref{epsnDEF}), and using the same ideas than those in the proof of Proposition \ref{BOUND-for-epsn} (with $\dd \equiv 1$), one gets
\begin{equation}
\tep_n(f) = \tep_n^{(1)}(f) + \tep_n^{(2)}(f) + \tep_n^{(3)}(f) + \tep_n^{(4)}(f),
\end{equation}
with
\begin{align}
|\tep_n^{(1)}(f)| & \leq \beta_N^* \|f''\|_\infty + D_N \Lambda_N \|f'''\|_\infty,\\
\nonumber |\tep_n^{(2)}(f)| & \leq \frac{\beta_N^* (3 D_N + C_N)}{3} \|f'''\|_\infty + \frac{C_N \Lambda_N}{3} \|f'''\|_\infty\\
& \hspace{30mm} + \frac{(M_N^2 + 12 D_N^2)\Lambda_N}{24} \|f^{4)}\|_\infty,\\
\nonumber |\tep_n^{(3)}(f)| & \leq \frac{\beta_N^* \{(M_N^*)^2 + 8 C_N D_N + 4 C_N^2 + 12 D_N^2\}}{24} \|f^{4)}\|_\infty\\
& \hspace{5mm} + \frac{C_N D_N \Lambda_N}{3} \|f^{4)}\|_\infty + \frac{(M_N^2 D_N + 4 D_N^3)\Lambda_N}{24} \|f^{5)}\|_\infty,\\
|\tep_n^{(4)}(f)| & \leq \frac{Z_N \Lambda_N}{960} \|f^{5)}\|_\infty,
\end{align}
where $Z_N$ is given by Equation (\ref{DEFZN}).

\par Now, recalling that $C_N = D_N + D_N^* = O(N^{-m})$, it is clear that
\begin{equation}
\max_{2 \leq n \leq N-1}|R_n^*(f)| = O(N^{-2}).
\end{equation}

\item Bound for $|R_N^*(f)|$:
\begin{multline}
|R_N^*(f)| = \left|\left( 2 d_N^* - \frac{\alpha_{N+1/2}}{1 - \mu_{N-1/2}} \right) \{E_{N-1}(f) - f'(\mu_{N-1/2})\}\right|\\
\leq \left( 2 D_N^* + X_N \right) \left\{\left(\max_{1 \leq n \leq N-1} |E_n(f)|\right) + \|f'\|_\infty)\right\}\\
= O(N^{-\min\{q,v\}}).
\end{multline}
\end{itemize}
\par In summary, if $f \in {\rm C}^5([-1,1])$,
\begin{equation}
\max_{1 \leq n \leq N} |R_n(f) + R_n^*(f)| = O(N^{-2}),
\end{equation}
and so the scheme converges with order $2$. The statements that remain to be proved follow easily.
\end{proof}

\begin{remark}
Theorem \ref{theorem-type-I} (for $\dd(\mu) = 1 - \mu^2$) becomes a particular case of Theorem \ref{theorem-type-II}: the one that results from considering $\Lambda_N = 0$.
\end{remark}

\subsection{Application of the theory to some examples. Numerical results}
Recalling Equation (\ref{DEFerror}), $E$ will denote the maximum of the absolute values of the errors in the complete set of nodes.

\paragraph{Morel's scheme} For this scheme, operated in FR mode, the hypotheses (\ref{interlacing-thm-II})--(\ref{hyp-betaN-thm-II}) are met; the justifications given for the Haldy\=/Ligou's scheme are also valid for this one. Moreover, as $r = s$, the hypothesis (\ref{new-hyp-1}) is automatically satisfied, while the last hypothesis (\ref{new-hyp-2}) also holds because $\alpha_{N+1/2} = 0$.

\par Thus, according to Theorem \ref{theorem-type-II}, Morel's scheme in FR mode is expected to converge with order $2$. Numerical results in agreement with the theoretical prediction are displayed in Table \ref{tableMorelFR}, the rows of which stop at the moment where roundoff errors start to spoil the approximation.

\begin{table}[H]
\centering
\begin{tabular}{cccccccc}
$N$ & $E$ & order & $q$ & $r$ & $s$ & $t$ & $u$\\
\hline\hline\\[-2.9mm]
$50$ & $6.94 \times 10^{-3}$ & & & & &\\
$100$ & $1.76 \times 10^{-3}$ & $1.98$ & $1.98$ & $1.98$ & $1.99$ & $1.99$ & $3.97$\\
$500$ & $7.14 \times 10^{-5}$ & $1.99$ & $1.99$ & $1.99$ & $1.99$ & $2.00$ & $3.99$\\
$1000$ & $1.79 \times 10^{-5}$ & $2.00$ & $2.00$ & $2.00$ & $2.00$ & $2.00$ & $4.00$\\
$5000$ & $7.16 \times 10^{-7}$ & $2.00$ & $2.00$ & $2.00$ & $2.00$ & $2.00$ & $4.00$\\
$10000$ & $1.86 \times 10^{-7}$ & $1.94$ & $2.00$ & $2.00$ & $2.00$ & $2.00$ & $4.00$\\
$20000$ & $8.64 \times 10^{-8}$ & $1.11$ & $2.00$ & $2.00$ & $2.00$ & $2.00$ & $4.00$
\end{tabular}
\caption{Numerical results for Morel's scheme operated in FR mode. $f(\mu) = {\rm e}^\mu$, $\dd(\mu) = 1 - \mu^2$.} \label{tableMorelFR}
\end{table}

\par Like Haldy\=/Ligou's scheme, Morel's does not converge when used in HR mode, and the reason is the same: the hypothesis (\ref{hyp-betaN-thm-II}) is not fulfilled. This explains why in reference \cite{GALP23} the authors had to discard the use of the HR mode, and use the FR mode instead, when solving the FPE.\footnote{Excerpted from \cite{GALP23}: `After experimentation, our choice will be FRLGQ. Apparently, HRLGQ, while appropriate for the neutron transport, performs poorly for the FPE.'} The numerical and graphical results are very similar to those of Haldy\=/Ligou's and are omitted.

\begin{remark}
The fact that $\Lambda_N = O(N^{-2})$, which according to Theorem \ref{alpha-approximates-D} is true under the assumptions (\ref{interlacing-thm-II})--(\ref{mid-point-1-hyp-thm-II}) in Theorem \ref{theorem-type-II}, is also supported by strong theoretical evidence in \cite{LPSE23}.
\end{remark}

\paragraph{Uniform mesh} (a non\=/GL scheme of type II and order $2$) Let us take, as $\{\mu_n\}_{n=1}^N$ and $\{\mu_{n+1/2}\}_{n=0}^N$, the uniformly spaced sets that we took when defining the scheme of type I and of order $2$ on Subsection \ref{subsection-application-type-I} (uniform mesh). Then, the corresponding scheme of type II satisfies $D_N = D_N^* = \alpha_{N+1/2} = 0$, which implies convergence of order $2$ according to Theorem \ref{theorem-type-II}. In fact, it can be easily checked that in this case ones gets $\alpha_{n+1/2} = \dd(\mu_{n+1/2})$ for all $n = 0,\dots,N$, and so this scheme is exactly the scheme of type I described on Subsection \ref{subsection-application-type-I} (uniform mesh).

\section{Conclusions}\label{SECTION-conclusions}
Widely recognized difference schemes for discretizing the FP angular diffusion operator have been incorporated into a comprehensive framework, which has undergone thorough analysis. This analysis has allowed us to derive sets of sufficient conditions that guarantee the convergence with second\=/order accuracy for the schemes falling into the two categories defined in this work: type I and type II schemes.

\par By applying these general results, the study provides theoretical evidence supporting second\=/order convergence of Lee's, Hal\-dy\=/Li\-gou's, and Morel's schemes when they are operated in FR mode. Moreover, the study highlights that Hal\-dy\=/Li\-gou's and Morel's schemes do not exhibit convergence when operated in HR mode, which aligns with experimental observations documented in \cite{GALP23}. This finding holds significant implications and should be taken into consideration when solving the FPE.

\par Lastly, this research uncovers new properties of GL nodes and weights. The analysis of these properties, which necessitates the use of specialized techniques, is conducted in \cite{LPSE23}.

\section*{Funding}
OLP acknowledges support from Ministerio de Ciencia e Innovaci\'on, pro\-ject PID2021-122625OB-I00 with funds from
\begin{center}
MCIN/AEI/10.13039/501100011033/ ERDF, UE,
\end{center}
and from the Xunta de Galicia (2021 GRC Gl-1563 - ED431C 2021/15).

\par JS acknowledges support from Ministerio de Ciencia e Innovaci\'on, project
 PID2021-127252NB-I00 with funds from
 \begin{center}
MCIN/AEI/10.13039/501100011033/ FEDER, UE.
\end{center}

\section*{Acknowledgments}
The authors are grateful to Prof. Barry Ganapol from the Aerospace and Mechanical Department at the University of Arizona for his interest in this work and helpful advice after carefully reviewing parts of the paper.

\end{document}